\documentclass[11pt]{article}
\pdfoutput=1
\usepackage[margin = 1.1in,a4paper]{geometry}
\usepackage[utf8]{inputenc} 
\usepackage[T1]{fontenc} 
\usepackage{soul}
\usepackage[a4paper]{geometry} 
\usepackage{amsmath, amssymb, amsthm}
\usepackage{mathtools, mathrsfs}
\usepackage{graphicx} 
\usepackage{xcolor} 
\usepackage{authblk}
\usepackage{pgfplots}
\pgfplotsset{compat=1.18}
\usepackage[backend=biber,maxbibnames = 99, style = alphabetic, url = false, isbn = false]{biblatex}
\usepackage{hyperref}
\usepackage{xurl}
\addbibresource{alpar_guy.bib}
\AtEveryBibitem{\clearlist{issn}}
\AtEveryBibitem{\clearfield{pages}}
\AtEveryBibitem{\clearfield{note}}
\usepackage[page]{appendix}

\usepackage{comment}
\usepackage{enumitem}
\usepackage{multicol}
\setlength\parindent{0pt}
\hypersetup{
    breaklinks=true,
    colorlinks,
    citecolor={[rgb]{1,0,0}},
    filecolor={[rgb]{1,0,0}},
    linkcolor={[rgb]{1,0,0}},
    urlcolor= {[rgb]{1,0,0}}
}

%spaces
\newcommand{\R}{\mathbb{R}}
\newcommand{\T}{\mathbb{T}}
\newcommand{\N}{\mathbb{N}}
\newcommand{\Z}{\mathbb{Z}}
\newcommand{\sP}{\mathscr{P}}
\newcommand{\m}{\mu}

%misc

\newcommand {\sgn}{ {\rm sgn} }
\newcommand{\co}{\colon}

%theorem
\theoremstyle{plain}
\newtheorem*{thm*}{Theorem}
\newtheorem{thm}{\bf Theorem}[section]
\newtheorem{lemma}[thm]{Lemma}
\newtheorem{proposition}[thm]{Proposition}
\newtheorem{corollary}[thm]{Corollary}
\theoremstyle{remark}
\newtheorem{remark}[thm]{\bf Remark}
\newtheorem{definition}[thm]{\bf Definition}
\newcommand{\diff}{\mathop{}\!\mathrm{d}}
\numberwithin{equation}{section}

\newcommand{\fakesection}[1]{
  \par\refstepcounter{section}
  \sectionmark{#1}
  \addcontentsline{toc}{section}{\protect\numberline{\thesection}#1}
}

\title{On a Cross-Diffusion System with Independent Drifts and no Self-Diffusion: The Existence of Totally Mixed Solutions}

\author{Alp\'ar R. M\'esz\'aros\thanks{Department of Mathematical Sciences, 
Durham University,
Upper Mountjoy Campus,
Durham,
England,
DH1 3LE. 
Email: alpar.r.meszaros@durham.ac.uk} \quad
and \quad 
Guy Parker\thanks{Department of Mathematical Sciences, 
Durham University,
Upper Mountjoy Campus,
Durham,
England,
DH1 3LE. 
Email: guy.m.parker@durham.ac.uk} 
}

\date{}

\allowdisplaybreaks
\begin{document}
\maketitle

\begin{abstract}
\noindent We establish the global existence of weak solutions for a two-species cross-diffusion system, set on the 1-dimensional flat torus, in which the evolution of each species is governed by two mechanisms. 
The first of these is a diffusion which acts only on the sum of the species with a logarithmic pressure law, and the second of these is a drift term, which can differ between the two species.
Our main results hold under a \emph{total mixing} assumption on the initial data. 
This assumption, which allows the presence of vacuum, requires specific regularity properties for the ratio of the initial densities of the two species.
Moreover, these regularity properties are shown to be propagated over time. 
In proving the main existence result, we also establish the spatial $BV$ regularity of solutions. In addition, our main results naturally extend to similar systems involving reaction terms.
\end{abstract}

\vspace{.7cm}

2020 \textit{Mathematics Subject Classification}: 35A01, 35B65, 35K40, 49Q22.
\newline\textit{Keywords:} Cross-Diffusion System; Drift-Diffusion Equation; Total Mixing; No Self-Diffusion.

\section{Introduction}

Cross-diffusion systems arise very naturally in mathematical biology and mathematical physics. 
Consequently, such models have received huge attention in the past half century.
It would simply be impossible to summarise the variety of these results, and hence,
for a non-exhaustive list, we refer to \cite{BurDiFPieSch:10, CheDauJun:18, CheJun:06, Jun:15} and the references therein. 
Moreover, it is often the case that such systems lie within the wider class of quasi-linear parabolic PDEs, for which a classical existence theory dates back to \cite{Ladyzhenskaya, Ama:85,Ama:86,Ama:95}.

\medskip

This manuscript concerns the existence of weak solutions to a two-species cross-diffusion system set on the $1$-dimensional flat torus, which we denote $\T := \R/ \Z$. 
Given a time horizon $T> 0$ and $V_{1},V_{2}:\T\to\R$ sufficiently smooth potentials, the model system is given by
\begin{equation}\label{eq:cdid}
\begin{cases}
    \partial_t \rho_1 = \partial_x(\rho_1\partial_x(\log(\rho_1+\rho_2)+V_1)),\\
    \partial_t \rho_2 = \partial_x(\rho_2\partial_x(\log(\rho_1+\rho_2)+V_2)),
\end{cases}
\text{on } (0,T)\times \T.
\end{equation}

In the absence of drift terms, systems resembling \eqref{eq:cdid} were initially proposed in \cite{GurtinPipkin, bertschhilhorst} as a means of modelling dispersive interacting biological species. 
In recent years, such systems have since been generalised to include reaction terms, allowing for the inclusion of growth-death mechanisms \cite{CFSS} and also, to accommodate a drift term which is common between the species \cite{JacobsRCDS}. 
Further lines of work, among others, have additionally studied the steady states of such systems in the presence of inter-species interactions \cite{burgersteadystates}.

\medskip

One of the most significant challenges in establishing the existence of solutions is imparted by the product term 
\begin{equation}\label{eq:product}
    \rho_i\partial_x\log(\rho_1+\rho_2).
\end{equation}

Indeed, given any suitable approximation of the system \eqref{eq:cdid}, one is generally only able to establish a uniform gradient estimate for the sum of the two species and not for the individual species. Whilst such estimates \emph{are} helpful in establishing the existence theory for advection-cross-diffusion systems which do not feature this product between the pressure gradient and the individual species (see, for instance, \cite{igbida2024crossdiffusion}), the term \eqref{eq:product} is problematic in the passage to the limit since it is only possible to establish the weak compactness of the terms $\rho_i$ and $\partial_x\log(\rho_1+\rho_2)$. 
As a result, it is not obvious that the product of these terms will converge to the product of their individual limits. 

\medskip
 
One could argue that the aforementioned lack of compactness is due to the lack of self-diffusion in System \eqref{eq:cdid}.  
Several works which investigate similar cross-diffusion systems (see for instance \cite{LabordeCDS,ABFS,Laurencot}) are able to circumvent this issue and obtain strong compactness for the individual species by considering an additional self-diffusion mechanism which provides sufficient gradient estimates for the individual species. That being said, it is worth remarking that the pure presence of self-diffusion mechanisms alone is in general not enough to ensure that the system is `well-behaved', if the cross-diffusion part is `too degenerate' (see for instance the very weak notion of solutions presented in \cite{DucSanYol}).

\medskip

\medskip

With a slight change of perspective, one can wonder if 
it is possible to establish higher order estimates for the sum $\rho_1+\rho_2$ (or the pressure variable, which, in our case, is $\log(\rho_1+\rho_2)$). 
Such estimates could lead to the strong convergence of the term $\partial_x\log(\rho_1+\rho_2)$ (as opposed to the strong convergence of the individual densities), and hence, allow for the passage to the limit of the product \eqref{eq:product}. 
For a different model, motivated by Aronson--B\'enilan type estimates, a breakthrough in this direction came in the work \cite{GwiPerSwi:19}. 
A similar philosophy, i.e. higher order estimates on suitable functions of the sum of the two species, has also turned out to be instrumental in various scenarios (see, for instance, \cite{BubPerPouSch:20, DavDebManSch:24, DebPerSchVau, JacobsRCDS, Jac:23, LiuXu:21, PriXu:20}).

\medskip

All these results, which rely on higher regularity estimates on the sum of the two densities, have an important philosophy in common. 
In particular, by adding up the two governing equations in these models, one obtains a `treatable' PDE for the sum, from which the necessary estimates may be derived.
For experts, it is 
evident that such an approach cannot work na\"ively in the presence of differing drift contributions (within the notation of our model, this is when $\partial_{x}V_{1}\neq\partial_{x}V_{2}$) and this is due to the spacial inhomogeneities incurred between the two species. 
Therefore, in this manuscript, we propose a totally different perspective for handling the challenge.

\medskip

To the best of our knowledge, the system closest to our model system \eqref{eq:cdid} (i.e. a system possessing only cross-diffusion and differing individual drifts) was that which was explored in \cite{MeszarosKim2018}. 
This paper studied a cross-diffusion system which was of porous medium type (instead of the logarithmic pressure law imposed in our manuscript). 
However, the main results of \cite{MeszarosKim2018} only establish the existence of solutions under an `ordering condition' on the drifts and under the assumption that the initial measures possess disjoint support. 
In particular, these conditions ensure that the densities of the two species remain segregated for all times. The segregation phenomenon and sharp interface formation for solutions was also present in the aforementioned works \cite{GurtinPipkin, bertschhilhorst,CFSS} as well as is some recent models of similar nature \cite{KimTon:21, JacKimTon:23, JacKimMes:21}.

\medskip

Looking at the structure of the terms in \eqref{eq:cdid}, it seems that there is a clear competition between the potential regularisation effect of the diffusion, acting on the sum of the two densities, and the drift terms, which act separately on the densities. 
In particular, such competition may lead to the creation of uncontrollable new interfaces between the two species.

\medskip
\medskip

{\bf The description of our results.}

\medskip

In contrast to \cite{MeszarosKim2018}, our work explores the existence of solutions under a \emph{total mixing} condition on the initial data. 
That is, we assume that, at the initial time, the ratio of the densities of the two species is bounded uniformly. 
We emphasise here that the total mixing condition does not preclude the presence of vacuum states or blow-ups in the initial data (as long as the initial data is also appropriately summable). 
\begin{figure}[h]
\caption{Suitable initial data}
\centering
\begin{tikzpicture}[scale = 0.7]
\begin{axis}[
    width=13cm,
    height=8cm,
    axis lines=middle,
    legend pos=north west,
    samples=600,
    domain=-0.6:0.6,
    restrict y to domain=0:55,
    xtick=\empty,
    y axis line style={opacity=0},
    ytick=\empty,
    enlargelimits=true
]

\addplot[blue, thick, domain=0.001:1] 
    {(x+0.5)*(1 - (x+0.5))/abs((x+0.5) - 0.5)^(1/3)*(1 - 0.2*sin(deg(17*pi*(x+0.5))))};
\addlegendentry{$\rho_{1,0}$}

\addplot[red, thick, dashed, domain=0.005:1]      
    {(x+0.5)*(1 - (x+0.5))/abs((x+0.5) - 0.5)^(1/3)*(1 - 0.3*sin(deg(11*pi*(x+0.5))))};
\addlegendentry{$\rho_{2,0}$}

\addplot[blue, thick, domain=-0.5:-0.001, forget plot] 
    {(x+0.5)*(1 - (x+0.5))/abs((x+0.5) - 0.5)^(1/3)*(1 - 0.2*sin(deg(17*pi*(x+0.5))))};

\addplot[red, thick, dashed, domain=-0.505:-0.005, forget plot] 
    {(x+0.5)*(1 - (x+0.5))/abs((x+0.5) - 0.5)^(1/3)*(1 - 0.3*sin(deg(11*pi*(x+0.5))))};

\end{axis}
\end{tikzpicture}
\label{fig:initialdata}
\end{figure}
Instead, as may be observed in Figure \ref{fig:initialdata}, the total mixing condition imposes that both densities must approach the vacuum state, or points of blow-up, at a rate proportional to one another.
Moreover, since this total mixing is propagated across time, we remark that the solutions investigated in this manuscript do not form inter-species interfaces and hence, the nature of these solutions is in complete dichotomy to the segregated solutions and interfaces studied in the cross-diffusion models introduced in \cite{BertschPeletier,MeszarosKim2018,CFSS,TomPerthame}. 
Under the total mixing condition, we are able to establish existence under quite general conditions on the drift terms and without any joint structural conditions (such as ordering).

\medskip

The guiding principle behind our main result is to rewrite \eqref{eq:cdid} in terms of suitable new variables. 
Inspired by \cite{bertschhilhorst, CFSS}, it is natural to introduce the variables 
$$
\sigma:=\rho_{1} + \rho_{2} \ \ \ {\rm{and}}\ \ \ r:=\frac{\rho_{1}}{\sigma}.
$$
However, when rewritten in the variables $(\sigma, r)$, any regularisation effects present in System \eqref{eq:cdid} are still not immediately apparent due to the presence of the spacial inhomogeneities coming from the differing drifts. 

\medskip

Therefore, the first main novelty of the approach that we propose is, rather, to consider a new set of variables $(\sigma,f(r))$, where $f:(0,1)\to\R$ is a carefully chosen nonlinear function. It turns out that the nonlinearity that works well with the logarithmic pressure law is
\[
f(r) := \log\left(\frac{r}{1-r}\right) = \log\left(\frac{\rho_{1}}{\rho_{2}}\right).
\]
By making this change of variables, the new system, which is used in our analysis is
\begin{equation}\label{eq:newvar}
\begin{cases}
    \partial_t\sigma = \partial_{x}(\partial_x\sigma+ \sigma W + r\sigma V),\\[5pt]
    \partial_t f(r) =(1-r)\partial_x f(r)V + (\partial_x f(r) + V)(\partial_x\log(\sigma) +W) -VW + \partial_x V,
\end{cases}
\text{on } (0,T)\times \T,
\end{equation}
where
\[
V := \partial_x(V_1 -V_2), \ W := \partial_xV_2.
\]

While this new system \eqref{eq:newvar} could seem arbitrary, it has several deep, hidden features, which will be uncovered in our analysis. To better motivate the choice of this system, let us give more intuition on it. The second equation, for $f(r)$, can be thought of as a `more standard' transport equation. 
Indeed, if we look at the function
$$
u(t,x):= f(r(t,x)) + V_{1}(x) - V_{2}(x),
$$
direct calculation shows us that the equation for $f(r)$ can be equivalently written as 
\begin{equation}\label{eq:utransport}
\partial_{t} u(t,x) = b(t,x,u(t,x))\partial_{x}u(t,x) + g(u(t,x),x),
\end{equation}
where
$$
\begin{array}{ll}
b(t,x,u)&:=\partial_{x}\log(\sigma(t,x)) + W(x) + h(u,x)V(x),\\
g(u,x)&:= -h(u,x)V^{2}(x) -(VW)(x) + \partial_{x}V(x),\\
h(u,x)&:= \left(1+e^{u-V_{1}(x)+V_{2}(x)}\right)^{-1}. 
\end{array}
$$
Now, it is well-known for experts that, in one space dimension, with periodic boundary conditions, the transport equation \eqref{eq:utransport} preserves $BV$ estimates {\it independently} of the regularity of the drift term $b$, as long as $\partial_{u}g$ is uniformly bounded and $x\mapsto\partial_{x}g(v(x),x)$ is uniformly integrable on $\T$ for any $v:\T\to\R$ measurable. Imposing suitable regularity and summability assumptions on $V_{1},V_{2}$ as well as $BV$ regularity of $f(r(0,\cdot))$, we will be precisely in this setting.

\medskip

So, the second main novelty of our approach in this paper is to precisely follow this guiding principle, and look for the necessary $BV$ estimates by considering the time dissipation of the energy
 \begin{equation}\label{ener:intro}
  \int_{\T}|\partial_x u(t,x)| \diff x = \int_{\T}|\partial_x f(r(t,x)) +V(x)| \diff x.    
 \end{equation}  
The dissipation of this precise energy has a somewhat philosophical connection to a similar energy dissipation present for the standard Fokker--Planck equation, which played a crucial role in some new results proven using an optimal transport perspective (see \cite{santambrogiogayrat, DiMarinoSantambrogio}).

\medskip 

As this will be transparent in our analysis later in the paper, it is more suitable to work with the variables $(\sigma,f(r))$ and so, with the system \eqref{eq:newvar}, instead of the variables $(\sigma,u)$ and their corresponding system. There are several reasons for this. Firstly, suitable regularisations at the level of the original system \eqref{eq:cdid} will, by vanishing viscosity, naturally translate to the system \eqref{eq:newvar}. Secondly, and more importantly, by the choice of $f$, $BV$ estimates on the quantity $f(r)$ will translate to $BV$ estimates on the variable $r$ immediately.

\medskip 
 
Indeed, having established uniform a priori estimates on the quantity \eqref{ener:intro}, together with the crucial property of the nonlinearity $f$ that $\inf_{r\in(0,1)}f'(r)\ge 4$, we conclude with the a priori uniform $BV$ estimates for the variable $r$. 
These are then combined with more standard Sobolev estimates on $\sigma$, from which we are able to conclude the strong compactness of the two individual densities separately.

\medskip

In order to justify our computations, as hinted above, as a technical piece in our analysis, we introduce suitable regularisations for \eqref{eq:cdid}, which are compatible with the system \eqref{eq:newvar} in the new variables. 
For a vanishing parameter $\eta>0$, the regularised systems read as
\begin{equation}\label{eq:etaintro1}
        \begin{cases}
            \partial_t \rho_1 = \eta\partial_{xx}^2\rho_1 + \partial_x(\rho_1\partial_x((1-2\eta)\log(\rho_1+\rho_2)+V_1)),\\[5pt]
            \partial_t \rho_2= \eta\partial_{xx}^2\rho_2 + \partial_x(\rho_2\partial_x((1-2\eta)\log(\rho_1+\rho_2)+V_2)),
        \end{cases} \ \text{on} \ (0,T) \times \T
\end{equation}
and, correspondingly,
\begin{align}\label{eq:etaintro2}
\left\{
\begin{array}{ll}
        \partial_t\sigma &= \partial_{x}((1-\eta)\partial_x\sigma+ \sigma W + r\sigma V),\\[5pt]
           \partial_t f(r) & = \eta\partial_{xx}^2f(r) -VW + \partial_x V + (\partial_x f(r) + V)(\partial_x\log(\sigma) +W - \eta (2r-1)\partial_xf(r))
           \\[5pt]
            & + ((1-\eta)-(1-2\eta)r)\partial_x f(r) V.
\end{array}
\right.
\end{align}
It turns out that choosing the particular way to regularise the original system as in \eqref{eq:etaintro1} is important so that the system \eqref{eq:etaintro2} gains uniform parabolicity for all $\eta>0$ small. 
Subsequently, the time dissipation of 
\eqref{ener:intro} is calculated for the system \eqref{eq:etaintro2} whilst the necessary estimates are shown to be independent of $\eta$.

\medskip

Even though \eqref{eq:etaintro2} is a more standard, non-degenerate system of semi-linear parabolic PDEs, we were unable to locate classical results in the literature, precisely applicable in our setting, which would provide the desired smoothness and qualitative properties of the solutions, needed in our analysis. 
Therefore, for the convenience of the reader, we derive suitable regularity estimates and the necessary Harnack inequality for solutions of this regularised system in Section \hyperref[sec:four]{Four}.

\medskip

The main result of this paper is informally formulated as follows (for the precise statement we refer to Theorem \ref{maintheorem}).
\begin{thm}\label{thm:intro}
Suppose that $\rho_{1,0}$ and $\rho_{2,0}$ are probability densities supported on $\T$ such that 
\begin{itemize}
\item $\|\rho_{1,0}+\rho_{2,0}\|_{L\log L(\T)} := \displaystyle\int_{\T}(\rho_{1,0}+\rho_{2,0})\log (\rho_{1,0}+\rho_{2,0})\diff x<+\infty$;
\item $\log\left(\frac{\rho_{1,0}}{\rho_{2,0}}\right)\in BV(\T)$.
\end{itemize}
Suppose, moreover, that $V_{1},V_{2}:\T\to\R$ are sufficiently regular and $T>0$ is a given time horizon of arbitrary length. 
Then there exists a weak solution 
 \[
 \rho_1,\rho_2 \in L^2([0,T];BV(\T))
 \]
to system \eqref{eq:cdid} with potentials $V_1, V_2$ and initial data $\rho_{1,0},\rho_{2,0}$.
\end{thm}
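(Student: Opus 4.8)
The plan is to construct solutions by vanishing viscosity, working throughout in the variables $(\sigma, f(r))$ rather than $(\rho_1,\rho_2)$, and to recover strong compactness of the individual densities from \emph{separate} compactness of the sum $\sigma = \rho_1 + \rho_2$ and of the ratio $r = \rho_1/\sigma$. The overall architecture is threefold: first, solve the regularised systems \eqref{eq:etaintro1}--\eqref{eq:etaintro2} for each fixed $\eta > 0$; second, derive a priori estimates that are \emph{uniform} in $\eta$; and third, extract limits as $\eta \to 0$ and verify that the limit is a weak solution of \eqref{eq:cdid} lying in $L^2([0,T];BV(\T))$.

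First I would treat the regularised problem. For fixed $\eta \in (0,1/2)$, the system \eqref{eq:etaintro1} is a non-degenerate, uniformly parabolic semilinear system; the specific regularisation (the $(1-2\eta)$ factor on the logarithm together with the added $\eta\partial_{xx}^2$) is chosen precisely so that, after passing to $(\sigma, f(r))$, the resulting system \eqref{eq:etaintro2} is itself uniformly parabolic via the $\eta\partial_{xx}^2 f(r)$ term. I would invoke the classical semilinear parabolic existence theory together with the regularity and Harnack estimates established for the regularised system to obtain smooth, strictly positive solutions $(\rho_1^\eta, \rho_2^\eta)$. The strict positivity, coming from the Harnack inequality, is what guarantees $r^\eta \in (0,1)$ and hence that $f(r^\eta)$ is well defined and smooth.

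The heart of the matter is the $\eta$-uniform a priori estimates, for which I would simultaneously track three quantities. The entropy $\int_\T \sigma^\eta \log \sigma^\eta \diff x$ gives, through its dissipation, a uniform bound on $\partial_x\log\sigma^\eta$ in $L^2((0,T)\times\T)$ and equi-integrability of $\sigma^\eta$, consistent with the $L\log L(\T)$ hypothesis on the initial data and with the presence of vacuum or blow-up. Standard parabolic energy estimates applied to the $\sigma$-equation in \eqref{eq:etaintro2}, which is a genuine and `treatable' drift-diffusion equation, then yield Sobolev bounds and strong compactness of $\sigma^\eta$. The decisive estimate, however, is the uniform $BV$ bound obtained from the transport reformulation \eqref{eq:utransport} for $u = f(r) + V_1 - V_2$: I would compute the time dissipation of the energy \eqref{ener:intro}, namely $\int_\T |\partial_x u^\eta| \diff x$, along \eqref{eq:etaintro2}. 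The point is that in one dimension with periodic boundary conditions this $BV$ seminorm is propagated \emph{independently} of the merely $L^2$-in-time regularity of the drift $b$, provided $\partial_u g$ is bounded and $x\mapsto\partial_x g(v(x),x)$ is uniformly integrable, which the smoothness and summability assumptions on $V_1,V_2$ ensure. A Gronwall argument then bounds $\int_\T |\partial_x u^\eta(t,\cdot)| \diff x$ uniformly in $\eta$ and $t$, starting from $f(r(0,\cdot)) \in BV(\T)$. Crucially, since $f(r) = \log(r/(1-r))$ satisfies $\inf_{r\in(0,1)} f'(r) = 4$, control of the $BV$ seminorm of $u$ transfers directly to a uniform $BV$ bound on $r^\eta$ itself.

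With these estimates in hand, the passage to the limit proceeds as follows. The uniform $BV$ bound on $r^\eta$ yields strong $L^1$ compactness by the compact embedding of $BV$, while the Sobolev bounds give strong compactness of $\sigma^\eta$; together these produce strong convergence of $\rho_1^\eta = r^\eta\sigma^\eta$ and $\rho_2^\eta = (1-r^\eta)\sigma^\eta$. This is exactly what is needed to pass to the limit in the obstructive product \eqref{eq:product}: $\rho_i^\eta \partial_x\log\sigma^\eta \to \rho_i\partial_x\log\sigma$ because $\rho_i^\eta \to \rho_i$ strongly while $\partial_x\log\sigma^\eta \rightharpoonup \partial_x\log\sigma$ weakly. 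Lower semicontinuity of the $BV$ norm gives $\rho_1,\rho_2 \in L^2([0,T];BV(\T))$, and one checks the limit satisfies the weak formulation of \eqref{eq:cdid} with the prescribed initial data. I expect the main obstacle to be the uniform $BV$ estimate itself: both the algebraic bookkeeping of the dissipation of $\int_\T|\partial_x u^\eta|\diff x$ along \eqref{eq:etaintro2}, and, more delicately, controlling the $\eta$-dependent correction terms (in particular the $-\eta(2r-1)\partial_x f(r)$ contribution and the viscous terms) so that they carry a favourable sign or are absorbed into the Gronwall estimate without spoiling uniformity in $\eta$. A secondary difficulty is rendering this formal computation rigorous, since $|\partial_x u|$ is non-smooth; this is where the positivity and smoothness furnished by the Harnack analysis, together with a careful mollification of the absolute value, become essential.
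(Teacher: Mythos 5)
Your architecture is essentially the paper's: vanishing viscosity via \eqref{eq:etaintro1}, smoothness and strict positivity of the regularised solutions from parabolic bootstrapping and a Harnack inequality, the change of variables to $(\sigma,f(r))$, the dissipation of $\int_{\T}|\partial_x f(r)+V|\diff x$ closed by Gr\"onwall, the transfer to a uniform $BV$ bound on $r$ via $\inf f'\geqslant 4$, separate Sobolev-type estimates on $\sigma$, and the strong-times-weak passage to the limit in the product \eqref{eq:product}. Two points, however, are genuine gaps rather than omitted routine detail.

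First, you claim that the entropy dissipation gives a uniform bound on $\partial_x\log\sigma^\eta$ in $L^2((0,T)\times\T)$ and later pass to the limit by pairing the strongly convergent $\rho_i^\eta$ with a \emph{weakly} convergent $\partial_x\log\sigma^\eta$. The entropy dissipation only controls the weighted quantity $\int_0^T\int_{\T}\sigma^\eta|\partial_x\log\sigma^\eta|^2\diff x\diff t=4\|\partial_x\sqrt{\sigma^\eta}\|_{L^2}^2$; since the theorem explicitly admits vacuum and there is no $\eta$-uniform lower bound on $\sigma^\eta$, no unweighted $L^p$ bound on $\partial_x\log\sigma^\eta$ follows, and its weak compactness is not available. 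The paper avoids this by never isolating $\partial_x\log\sigma$: it combines the weighted bound with the interpolation $L^2([0,T];H^1)\cap L^\infty([0,T];L^2)\hookrightarrow L^6$ applied to $\sqrt{\sigma^\eta}$ to get $\partial_x\sigma^\eta$ bounded in $L^{3/2}([0,T]\times\T)$, and writes the obstructive term as $\frac{\rho_i^\eta}{\sigma^\eta}\,\partial_x\sigma^\eta$, i.e.\ a quotient bounded in $[0,1]$ (converging strongly in every $L^p$) against a weakly convergent $\partial_x\sigma^\eta$. You should restructure your limit passage accordingly. Second, strong $L^1$-in-space compactness from $BV\hookrightarrow L^1$ at each fixed time does not yield strong compactness of the time-dependent families $\rho_i^\eta$ in $L^2([0,T];L^1(\T))$; you also need equicontinuity in time, which the paper supplies by bounding $\partial_t\rho_i^\eta$ in $L^{3/2}([0,T];H^{-s}(\T))$ and invoking the Aubin--Lions--Simon lemma. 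Both repairs are available with the estimates you already have in hand, but as written the limit passage does not close.
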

To the best of our knowledge, our main theorem is the first in the literature to establish the existence of solutions for a cross-diffusion system which can handle mixed initial densities and differing drifts in the absence of self-diffusion.

\medskip

Our main contributions and results described above extend very naturally to similar systems incorporating reaction terms as well. Therefore, we conclude the manuscript by presenting the results for the corresponding cross-diffusion reaction systems, possessing the structure
\begin{equation}\label{eq:rcdid_intro}
\begin{cases}
    \partial_t \rho_1 = \partial_x(\rho_1\partial_x(\log(\rho_1+\rho_2)+V_1)) + \rho_1 F_1(\rho_1,\rho_2),\\
    \partial_t \rho_2 = \partial_x(\rho_2\partial_x(\log(\rho_1+\rho_2)+V_2)) + \rho_2 F_2(\rho_1,\rho_2),
\end{cases}
\text{on } (0,T)\times \T.
\end{equation}
Our result for this system can be informally stated as follows (we refer to Theorem \ref{reactionexistence} for the precise statement)
\begin{thm}\label{thm:main2_intro}
    Assume that there exists a suitable smooth approximation of System \eqref{eq:rcdid_intro} which admits uniformly positive densities.
    Further, assume that $\rho_{1,0}$ and $\rho_{2,0}$ admit bounded probability densities, supported on $\T$.

    \medskip
    
    If the reaction terms $F_1,F_2\co [0,+\infty)^2 \to \R$ are bounded and Lipschitz continuous and all the assumptions of Theorem \ref{thm:intro} hold, then there exists a weak solution to System \eqref{eq:rcdid_intro} with potentials $V_1,V_2$, reaction terms $F_1,F_2$ and initial data $\rho_{1,0},\rho_{2,0}$.
\end{thm}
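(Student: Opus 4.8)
The plan is to treat the reaction system \eqref{eq:rcdid_intro} as a controllable perturbation of \eqref{eq:cdid} and to rerun the entire compactness scheme behind Theorem \ref{thm:intro} (precisely, Theorem \ref{maintheorem}), checking that the extra source terms $\rho_iF_i(\rho_1,\rho_2)$ do not spoil the uniform a priori estimates. Working on the smooth, uniformly positive approximation furnished by hypothesis, I would first record how the reaction acts in the variables $(\sigma,f(r))$. Summing the two equations adds $\rho_1F_1+\rho_2F_2=\sigma\big(rF_1+(1-r)F_2\big)$ to the $\sigma$-equation of \eqref{eq:newvar}, while differencing the logarithmic time derivatives adds exactly $F_1-F_2$ to the equation for $f(r)$. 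Since $V_1-V_2$ is time independent, the transport equation \eqref{eq:utransport} for $u=f(r)+V_1-V_2$ becomes $\partial_t u=b\,\partial_x u+\tilde g$, with effective zeroth-order term $\tilde g(u,x)=g(u,x)+F_1(\rho_1,\rho_2)-F_2(\rho_1,\rho_2)$, where $\rho_1=(1-h(u,x))\sigma$ and $\rho_2=h(u,x)\sigma$.

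Second, I would upgrade the control on the sum. Since $\lvert rF_1+(1-r)F_2\rvert\le M:=\max(\|F_1\|_{L^\infty},\|F_2\|_{L^\infty})$, testing the $\sigma$-equation against $\sigma$ gives, after Young's inequality, $\tfrac{d}{dt}\|\sigma\|_{L^2(\T)}^2+\|\partial_x\sigma\|_{L^2(\T)}^2\lesssim\|\sigma\|_{L^2(\T)}^2$, the reaction contributing only the harmless term $2\int_\T\sigma^2(rF_1+(1-r)F_2)\diff x$. Gr\"onwall then yields a uniform bound $\sigma\in L^\infty([0,T];L^2(\T))\cap L^2([0,T];H^1(\T))$, and the one-dimensional embedding $H^1(\T)\hookrightarrow L^\infty(\T)$ upgrades this to $\sigma\in L^2([0,T];L^\infty(\T))$. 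This is exactly where the boundedness (hence square integrability) of the initial densities is used, in contrast to the mere $L\log L$ summability of Theorem \ref{thm:intro}.

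The heart of the matter, and the step I expect to be the main obstacle, is the propagation of the $BV$ bound via the time dissipation of \eqref{ener:intro}. Relative to the drift-only case the only novelty is the extra source $F_1-F_2$ in $\tilde g$, so I must verify the two structural conditions recalled after \eqref{eq:utransport}, namely that $\partial_u\tilde g$ is uniformly bounded and that $x\mapsto\partial_x\tilde g(v(x),x)$ is uniformly integrable. Since $\partial_u h=-r(1-r)$ is bounded by $1/4$, the chain rule and the Lipschitz continuity of $F_1,F_2$ give $\lvert\partial_u(F_1-F_2)\rvert\le C\sigma$, so the Gr\"onwall coefficient in the estimate for $\|\partial_x u(t)\|_{L^1(\T)}$ acquires the term $C\|\sigma(t)\|_{L^\infty(\T)}$; crucially this is integrable in time by the bound $\sigma\in L^2([0,T];L^\infty(\T))$ just obtained, so the $BV$ estimate closes with a constant independent of the regularisation. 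The spatial condition is handled similarly, the explicit $x$-dependence of $F_1-F_2$ entering only through $\sigma$, $\partial_x\sigma$ and the smooth potentials, all uniformly integrable. Through the property $\inf_r f'(r)\ge 4$ this transfers to a uniform spatial $BV$ bound on $r$.

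It remains to conclude as in the reaction-free case. The $L^2([0,T];H^1(\T))$ bound on $\sigma$ together with the uniform $BV$ bound on $r$ furnishes strong compactness of the individual approximate densities $\rho_1^\varepsilon,\rho_2^\varepsilon$ in $L^2((0,T)\times\T)$, so that, along a subsequence, $\rho_i^\varepsilon\to\rho_i$ strongly and almost everywhere. Since $F_1,F_2$ are continuous and bounded, $F_i(\rho_1^\varepsilon,\rho_2^\varepsilon)\to F_i(\rho_1,\rho_2)$ boundedly a.e., whence $\rho_i^\varepsilon F_i(\rho_1^\varepsilon,\rho_2^\varepsilon)\to\rho_i F_i(\rho_1,\rho_2)$ in $L^1((0,T)\times\T)$, while every remaining term passes to the limit exactly as in the proof of Theorem \ref{thm:intro}. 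This identifies the limit as a weak solution of \eqref{eq:rcdid_intro} and completes the argument.
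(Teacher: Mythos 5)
Your proposal is correct and follows essentially the same route as the paper's own (sketched) proof: the same change of variables to $(\sigma,f(r))$, the same $L^2$ energy dissipation for $\sigma$ yielding $L^\infty([0,T];L^2(\T))\cap L^2([0,T];H^1(\T))$, the same dissipation of $\int_\T|\partial_x f(r)+V|$ with the extra term $\partial_x(\tilde F_1-\tilde F_2)$ controlled via the Lipschitz bounds on $F_i$, and the same compactness and limit passage. The one point where you diverge is how the Gr\"onwall loop is closed on the term proportional to $\|\partial_x r\|_{L^1}$ (whose coefficient scales like $\sigma$): the paper invokes the parabolic maximum principle (\cite[Chapter III, Theorem 8.1]{Ladyzhenskaya}) to get $\sigma\in L^\infty([0,T]\times\T)$ and hence a constant coefficient, whereas you use the one-dimensional embedding $H^1(\T)\hookrightarrow L^\infty(\T)$ to get $\|\sigma(t)\|_{L^\infty(\T)}\in L^2([0,T])\subset L^1([0,T])$ and apply Gr\"onwall with a time-integrable coefficient -- a slightly more economical variant that needs only $\sigma_0\in L^2(\T)$ at this step, and is equally valid.
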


\medskip

We end this introduction with some concluding remarks.
\begin{remark}
\begin{itemize}
\item Our model system possesses a $W_{2}$-gradient flow structure. Indeed, it is straightforward to see that System \eqref{eq:cdid} corresponds to the gradient flow of the energy
\[
E(\rho_1,\rho_2) := \int_{\T} \left[(\rho_1+\rho_2)\log(\rho_1+\rho_2) + V_1 \rho_1 + V_2\rho_2\right]\diff x.
\]
Whilst our analysis does not rely on this gradient flow structure, we use a theorem from \cite{LabordeCDS}, which establishes the well-posedness of the regularised system \eqref{eq:etaintro1} and, on the contrary, this reference does utilise the $W_{2}$-gradient flow structure.
\item This paper focuses on a particular model with logarithmic pressure law, i.e. $\log(\rho_{1}+\rho_{2})$. The assumption $\log\left(\frac{\rho_{1,0}}{\rho_{2,0}}\right)\in BV(\T)$ as well as the choice of the nonlinearity $f$ and the corresponding energy dissipation are inherent to this pressure law. 
However, we believe that our main results will also be able to shed a new perspective on similar models with different pressure laws.
\item The choice of the periodic setting of $\T$ as the state space is for purely technical reasons. 
We believe that, with suitable additional assumptions and without too many conceptual difficulties (but possibly at the price of many involved computations), the analysis of this paper could be extended to the non-compact setting of $\R$, or to bounded intervals with suitable boundary conditions. 
\item As this is not the main focus of our paper, the existence of the cross-diffusion reaction system \eqref{eq:rcdid_intro} is conditional on the existence of suitable smooth approximation. We decided to not pursue such approximations in this manuscript, however, we believe that these should follow a similar lines of thought, as for the conservative system \eqref{eq:cdid}, developed in Section \hyperref[sec:three]{Three}.
In particular, it is remarkable to notice that System \eqref{eq:rcdid_intro} rewrites neatly in the variables $(\sigma, f(r))$ as well. 
This feature is highlighted in the proof of Theorem \ref{reactionexistence}.
\end{itemize}
\end{remark}

\medskip

The structure of the rest of the paper is as follows. In Section \hyperref[sec:two]{Two}, we introduce the relevant preliminaries for this manuscript and present the main result, Theorem \ref{maintheorem}, along with its accompanying hypotheses. 
In Section \hyperref[sec:three]{Three}, we introduce a suitable approximation scheme for System \eqref{eq:cdid} before establishing the main $BV$ estimate for $f(r)$ and proving the main theorem.
In Section \hyperref[sec:four]{Four}, we establish the smoothness of the approximating system introduced in Section \hyperref[sec:three]{Three}, thus, justifying the calculations of the previous section. Finally, in Section \hyperref[sec:five]{Five} we demonstrate how our analysis can be extended to systems involving reaction terms as well. We end the manuscript with an appendix section, where we have collected a few  technical results.

\section{Preliminaries and Main Theorem}\label{sec:two}
\subsection{Function and Measure Spaces}
Given $u \in L^1(\T)$, its total variation $TV(u)$ is defined
\[
TV(u):=\sup \bigg\{\int_{\T} u \partial_x \varphi \diff x \ \bigg| \ \varphi \in C^1(\T), \ \|\varphi\|_{L^{\infty}}<1 \bigg\}.
\]
Subsequently, we say that $u$ is of bounded variation if $TV(u) < +\infty$. 
Moreover, we define the space of functions of bounded variation by
\[
BV(\T):=\{u \in L^1(\T) \ | \ TV(u) < +\infty\}.
\]
When equipped with the norm 
\[\|u\|_{BV(\T)}:= \|u\|_{L^1(\T)} + TV(u),\]
$BV(\T)$ defines a Banach space.
Moreover, if $u \in W^{1,1}(\T)$, then $u$ satisfies the equality 
\[
\|u\|_{BV(\T)} = \|u\|_{W^{1,1}(\T)}.
\]

We let $\mathscr{M}_{+}(\T)$ denote the space of non-negative finite Radon measures on $\T$ and define the space of probability measures 
\[
\sP(\T) := \{\m \in \mathscr{M}_{+}(\T) \ | \ \m(\T) =1\}.
\]
Moreover, throughout this paper, we consider $\sP(\T)$ endowed with the topology induced by the weak-$*$ convergence of measures, that is, the topology induced by duality with $C(\T)$.
\subsection{Assumptions and Main Theorem}
\begin{definition}\label{mainsolution}
Given Lipschitz continuous potentials $V_1, V_2\co \T \to \R$ and initial data $\rho_{1,0},\rho_{2,0}\in \sP(\T)\cap L^1(\T)$, we say that the pair $(\rho_1,\rho_2)$ is a weak solution to System \eqref{eq:cdid} if: 
    \begin{enumerate}
        \item $\rho_1,\rho_2 \in C([0,T];\sP(\T))$,
        \item $\rho_1 + \rho_2 \in L^1([0,T];W^{1,1}(\T))$.
    \end{enumerate}
    
    In addition, we demand that the following equality is satisfied for any $\varphi \in C^\infty_c([0,T)\times \T)$ and $i \in \{1,2\}$.
    \begin{equation}\label{eq:weakcdid}
        \int_{\T} \rho_{i,0}\varphi_0 \diff x + \int_0^T\int_{\T}\rho_i\partial_t\varphi \diff x \diff t = \int_0^T\int_{\T} \rho_i\partial_x(\log(\rho_1+\rho_2)+V_i)\partial_x \varphi\diff x \diff t.
    \end{equation}
    \end{definition}
    
To guarantee the existence of solutions, the following hypotheses on the initial data and drift inducing potentials are made.
\begin{multicols}{2}
\begin{enumerate}[label=$(\mathbf{H \arabic*})$, ref = $\mathbf{H \arabic*}$]
    \item \label{H1}
    $\|\rho_{1,0}+\rho_{2,0}\|_{L\log L(\T)} <+\infty$;
    \item \label{H2}
    $\log\left(\frac{\rho_{1,0}}{\rho_{2,0}}\right)\in BV(\T)$;
    \item \label{H3}
    $V_1,V_2 \in W^{2,1}(\T)$;
    \item \label{H4}
    $V_1-V_2 \in W^{3,1}(\T)$.
\end{enumerate}
\end{multicols}

\begin{thm}\label{maintheorem}
    If the hypotheses \ref{H1}-\ref{H4} hold, then there exists a weak solution $(\rho_1,\rho_2)$ to System \eqref{eq:cdid} with potentials $V_1, V_2$ and initial data $(\rho_{1,0},\rho_{2,0})$.
    Moreover, this solution possesses the regularity:
    \[\rho_1,\rho_2 \in L^2([0,T];BV(\T)) \cap L^p_{loc}((0,T];BV(\T))\]
    for every $p \in [1,\infty)$.
\end{thm}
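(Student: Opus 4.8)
The plan is to obtain the solution as a vanishing-viscosity limit of the regularised systems \eqref{eq:etaintro1}, exploiting their equivalent formulation \eqref{eq:etaintro2} in the variables $(\sigma,f(r))$. For each fixed $\eta>0$, the well-posedness and non-negativity of the approximants $(\rho_{1}^{\eta},\rho_{2}^{\eta})$ can be borrowed from \cite{LabordeCDS} via the $W_{2}$-gradient flow structure, while the strict positivity, smoothness and a Harnack inequality — needed to make sense of $\log\sigma^{\eta}$ and $f(r^{\eta})$ and to legitimise the differential computations below — are supplied by Section \hyperref[sec:four]{Four}. The whole argument then reduces to producing \emph{$\eta$-independent} a priori bounds strong enough to pass to the limit in the product \eqref{eq:product}, and to recognising the limit as a weak solution in the sense of Definition \ref{mainsolution}.

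First I would collect two families of uniform estimates. Using the dissipation of the energy $E$ (equivalently, testing the $\sigma^{\eta}$-equation against $\log\sigma^{\eta}$ and the potentials) yields, uniformly in $\eta$, the Fisher-information bound $\sqrt{\sigma^{\eta}}\in L^{2}([0,T];H^{1}(\T))$ together with mass conservation; since $W^{1,1}(\T)\hookrightarrow L^{\infty}(\T)$ in one dimension, this already gives $\sigma^{\eta}\in L^{2}([0,T];W^{1,1}(\T))$ with $\|\sigma^{\eta}\|_{L^{\infty}_{x}}\lesssim 1+\mathrm{TV}(\sigma^{\eta})\in L^{2}_{t}$. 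The crux is the second, genuinely new estimate: uniform control of the energy \eqref{ener:intro}. Writing $u^{\eta}=f(r^{\eta})+V_{1}-V_{2}$ and differentiating \eqref{eq:utransport} in space, I would compute the time derivative of $\int_{\T}|\partial_{x}u^{\eta}|\diff x$ and observe the decisive cancellation: setting $w=\partial_{x}u^{\eta}$, the two contributions $\int\sgn(w)\,b\,\partial_{x}w=-\int(\partial_{x}b)|w|$ and $\int\sgn(w)(\partial_{x}b)\,w=\int(\partial_{x}b)|w|$ annihilate one another after a periodic integration by parts, so that \emph{no bound on $\partial_{x}b$ — hence none on the uncontrolled $\partial_{x}\log\sigma^{\eta}$ — is required}. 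What remains is governed only by $\partial_{u}g$ and $\partial_{x}g$, and the viscous term $\eta\partial_{xx}u^{\eta}$ contributes $\eta\int\sgn(w)\partial_{xx}w\le 0$ with the favourable sign. Using \ref{H3}--\ref{H4} to bound $\|\partial_{u}g\|_{L^{\infty}}$ and $\|\partial_{x}g\|_{L^{1}}$ uniformly (this is exactly why $V_{1}-V_{2}\in W^{3,1}$ is imposed), Grönwall's inequality together with the finite initial value $\mathrm{TV}(u^{\eta}(0))<\infty$ coming from \ref{H2} and \ref{H4} yields $\sup_{t}\mathrm{TV}(u^{\eta}(t))\le C$, uniformly in $\eta$.

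Next I would convert these into $BV$ bounds on the physical unknowns. Because $\inf_{(0,1)}f'\ge 4$, the inverse $f^{-1}$ is Lipschitz with constant $\le 1/4$, whence $\mathrm{TV}(r^{\eta}(t))\le\tfrac14\big(\mathrm{TV}(u^{\eta}(t))+\mathrm{TV}(V_{1}-V_{2})\big)$, so that $r^{\eta}\in L^{\infty}([0,T];BV(\T))$ uniformly with $0\le r^{\eta}\le 1$. The Leibniz rule for $BV$ functions gives $\mathrm{TV}(\rho_{1}^{\eta})\le\|r^{\eta}\|_{L^{\infty}}\mathrm{TV}(\sigma^{\eta})+\|\sigma^{\eta}\|_{L^{\infty}}\mathrm{TV}(r^{\eta})$, and combining $\mathrm{TV}(\sigma^{\eta})\in L^{2}_{t}$, $\|\sigma^{\eta}\|_{L^{\infty}_{x}}\in L^{2}_{t}$ and $\mathrm{TV}(r^{\eta})\in L^{\infty}_{t}$ produces the claimed $\rho_{1}^{\eta},\rho_{2}^{\eta}\in L^{2}([0,T];BV(\T))$ uniformly; the improved $L^{p}_{loc}((0,T];BV(\T))$ integrability would follow from standard parabolic smoothing of $\sqrt{\sigma^{\eta}}$ away from $t=0$.

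Finally, for the passage to the limit, the essential simplification is the identity $\rho_{1}^{\eta}\partial_{x}\log\sigma^{\eta}=r^{\eta}\partial_{x}\sigma^{\eta}=2\,(r^{\eta}\sqrt{\sigma^{\eta}})\,(\partial_{x}\sqrt{\sigma^{\eta}})$, which replaces the troublesome product by a \emph{strong $\times$ weak} pairing in $L^{2}$. Indeed, an Aubin--Lions argument (space compactness from the uniform $BV$ bounds, time regularity read off from \eqref{eq:etaintro2}) gives $r^{\eta}\to r$ and $\sqrt{\sigma^{\eta}}\to\sqrt{\sigma}$ strongly in $L^{2}((0,T)\times\T)$ — hence $r^{\eta}\sqrt{\sigma^{\eta}}\to r\sqrt{\sigma}$ strongly — while $\partial_{x}\sqrt{\sigma^{\eta}}\rightharpoonup\partial_{x}\sqrt{\sigma}$ weakly in $L^{2}$ by the Fisher bound, so the product converges to $r\partial_{x}\sigma=\rho_{1}\partial_{x}\log\sigma$. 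All remaining terms in \eqref{eq:weakcdid} are linear or pair a strongly convergent density against a fixed potential, and the time-continuity and attainment of the initial data follow from the uniform flux bounds. I expect the \textbf{main obstacle} to be the $BV$ estimate of the second paragraph: making the formal cancellation rigorous on the regularised (smooth, strictly positive) solutions, checking that the viscous and potential corrections in \eqref{eq:etaintro2} preserve the structure of \eqref{eq:utransport}, and verifying that every constant is genuinely independent of $\eta$.
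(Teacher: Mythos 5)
Your proposal follows essentially the same route as the paper: regularise via \eqref{eq:etaintro1}, use Section \hyperref[sec:four]{Four} for smoothness and strict positivity, derive the uniform bound on \eqref{ener:intro} through the sign-cancellation of the total-derivative terms on $\T$ (plus Kato's inequality for the viscous term) and Gr\"onwall, combine $f'\geqslant 4$ with the Fisher-information bound on $\sigma$ to get uniform $L^2([0,T];BV(\T))$ control of each species, and close via Aubin--Lions with a strong-times-weak pairing for the product \eqref{eq:product}. The only deviations are cosmetic (working with $u=f(r)+V_1-V_2$ and \eqref{eq:utransport} rather than the system \eqref{eq:etaintro2}, and pairing $r\sqrt{\sigma}$ against $\partial_x\sqrt{\sigma}$ in $L^2$ instead of $r$ against $\partial_x\sigma$ in $L^3\times L^{3/2}$), so the argument is correct and matches the paper's proof.
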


\begin{remark}\label{mixing}
\begin{itemize}
    \item  We once again emphasise that Hypothesis \ref{H2} does not preclude the presence of the vacuum state or blow-ups in the initial datum.
    On the other hand, in enforcing Hypothesis \ref{H2}, it follows that points of vacuum and points of blow-up in each species must coincide. 
    Moreover, the two initial densities must become asymptotically proportional to one another as a state of vacuum or state of blow-up is approached. 
    \item
    Since we are in a $1$-dimensional setting, the continuous embedding $BV(\T)\hookrightarrow L^\infty(\T)$ holds. 
    Consequently, Hypothesis \ref{H2} further implies that the ratio $\frac{\rho_{1,0}}{\rho_{2,0}}$ and its reciprocal are both bounded uniformly on $\T$. 
    It is this condition in particular that we refer to as \emph{total mixing}.
    \item We would like to underline here that the imposed summability assumptions on the initial data are minimal: we only impose $\rho_{1,0},\rho_{2,0}$ to be probability densities, whose sum is in $L\log L (\T)$. If the initial data would satisfy higher summability, i.e. $\rho_{1,0},\rho_{2,0} \in L^q(\T)$, for $q >2$, then the solution we construct would also possess the regularity $L^q([0,T];BV(\T))$.
\end{itemize}
   
\end{remark}

\section{$BV$ Estimates and Existence}\label{sec:three}
Before establishing any a priori estimates for System \eqref{eq:cdid}, it is first necessary to approximate System \eqref{eq:cdid} with a family of systems for which the well-posedness theory is on surer footing. 
To this end, we defer to the existence theory previously established in \cite[Theorem 2.2]{LabordeCDS}. 
Moreover, we consider a sequence of regularised systems, each similar to System \eqref{eq:cdid} but featuring an additional linear self-diffusion term in the equation governing each species - i.e. for $\eta\in(0,\frac{1}{2}]$, we consider systems of the form 
\begin{equation}\label{eq:regcdid}
        \begin{cases}
            \partial_t \rho_1 = \eta\partial_{xx}^2\rho_1 + \partial_x(\rho_1\partial_x((1-2\eta)\log(\rho_1+\rho_2)+V_1)),\\
            \partial_t \rho_2= \eta\partial_{xx}^2\rho_2 + \partial_x(\rho_2\partial_x((1-2\eta)\log(\rho_1+\rho_2)+V_2)),
        \end{cases} \ \text{on} \ (0,T) \times \T.
\end{equation}
We refer to System \eqref{eq:regcdid} as the \emph{$\eta$-Regularised System} and, in the following, we define the relevant notion of weak solution for System \eqref{eq:regcdid} and recall from \cite[Theorem 2.2]{LabordeCDS} its accompanying existence result.

\begin{definition}
Let $\eta \in (0,\frac{1}{2}]$. 
Given Lipschitz continuous potentials $V_1, V_2\co \T \to \R$ and initial data $\rho_{1,0},\rho_{2,0}\in \sP(\T)\cap L^1(\T)$, a pair $(\rho_1,\rho_2)$ is a weak solution to the \emph{$\eta$-Regularised System} \eqref{eq:regcdid} if 
\[
\rho_1,\rho_2\in C([0,T];\sP(\T))\cap L^1([0,T];W^{1,1}(\T))
\]
and, if the following equality is satisfied for any $\varphi \in C_{c}^\infty([0,T)\times \T)$ and $i \in \{1,2\}$.
    \begin{equation}\label{eq:weaketacdid}
    \begin{split}
        \int_{\T} \rho_{i,0}\varphi_0 \diff x &  + \int_0^T\int_{\T}\rho_i\partial_t\varphi \diff x \diff t 
        \\
        & = \int_0^T\int_{\T} \rho_i\partial_x((1-2\eta)\log(\rho_1+\rho_2)+V_i)\partial_x \varphi + \eta \partial_x\rho_i \partial_x\varphi \diff x \diff t.
    \end{split}
    \end{equation}
\end{definition}

\begin{thm}{\cite[Theorem 2.2]{LabordeCDS}}
If $\rho_{1,0},\rho_{2,0}$ satisfy Hypothesis \ref{H1} then there exists a weak solution to the $\eta$-Regularised System \eqref{eq:regcdid} with initial data $(\rho_{1,0},\rho_{2,0})$.
\end{thm}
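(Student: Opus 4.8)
The plan is to recognise the $\eta$-Regularised System \eqref{eq:regcdid} as the gradient flow, in the product Wasserstein space $\sP(\T)\times\sP(\T)$, of the energy
\[
E_\eta(\rho_1,\rho_2) := \eta\int_{\T}(\rho_1\log\rho_1+\rho_2\log\rho_2)\diff x + (1-2\eta)\int_{\T}\sigma\log\sigma\diff x + \int_{\T}(V_1\rho_1+V_2\rho_2)\diff x,
\]
where $\sigma := \rho_1+\rho_2$, and to construct a solution by the minimising movement (JKO) scheme. A direct computation of $\partial_x\tfrac{\delta E_\eta}{\delta\rho_i}$ confirms that the formal $W_2$-gradient flow of $E_\eta$ is exactly \eqref{eq:regcdid}: the self-diffusion $\eta\partial^2_{xx}\rho_i$ is generated by the separate entropies $\eta\int\rho_i\log\rho_i$, while the cross-diffusion comes from the non-separable term $(1-2\eta)\int\sigma\log\sigma$ and the drifts from the potential term.

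First I would fix a time step $\tau>0$ and define the iterates
\[
(\rho_1^{n+1},\rho_2^{n+1}) \in \argmin_{(\mu_1,\mu_2)\in\sP(\T)^2}\left\{ \frac{1}{2\tau}\left(W_2^2(\mu_1,\rho_1^n)+W_2^2(\mu_2,\rho_2^n)\right) + E_\eta(\mu_1,\mu_2)\right\}.
\]
Existence of minimisers at each step follows by the direct method: $\sP(\T)$ is weak-$*$ compact, the transport and (Lipschitz) potential terms are lower semicontinuous, and the entropies are lower semicontinuous, bounded below, and coercive in $L\log L(\T)$, so $E_\eta$ is bounded below with compact sublevel sets. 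Writing the Euler--Lagrange conditions through the optimal transport maps between consecutive iterates yields a discrete-in-time weak formulation approximating \eqref{eq:weaketacdid}.

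Next I would collect a priori estimates, all uniform in $\tau$. Telescoping the scheme gives the energy monotonicity $E_\eta(\rho_1^{n+1},\rho_2^{n+1})\le E_\eta(\rho_1^n,\rho_2^n)$ together with $\frac{1}{2\tau}\sum_{i,n} W_2^2(\rho_i^{n+1},\rho_i^n)\le E_\eta(\rho_{1,0},\rho_{2,0})-\inf E_\eta$, which controls the discrete $W_2$ velocity and yields a uniform Hölder-in-time bound in $W_2$ for the interpolated curves. The entropy bound controls $L\log L(\T)$, and a flow-interchange (discrete entropy-dissipation) estimate against the self-diffusion provides a uniform $L^2([0,T];H^1(\T))$-type gradient bound on each $\rho_i$. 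Feeding these into an Aubin--Lions--Simon compactness argument adapted to the Wasserstein setting produces strong $L^1((0,T)\times\T)$ convergence of the interpolants $\rho_i^\tau\to\rho_i$ along a subsequence as $\tau\to0$.

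Finally I would pass to the limit in the discrete weak formulation. Because the self-diffusion $\eta\partial^2_{xx}\rho_i$ furnishes gradient compactness for each individual density, the strong convergence of $\rho_i^\tau$ together with the weak convergence of $\partial_x\log\sigma^\tau$ suffices to identify the limit of the product $\rho_i^\tau\partial_x\log\sigma^\tau$, recovering \eqref{eq:weaketacdid}. The main obstacle is that $E_\eta$ is \emph{non-separable}: the term $(1-2\eta)\int\sigma\log\sigma$ couples the two species through their sum, so the gradient-flow machinery cannot be applied to each species in isolation. One must verify convexity of $E_\eta$ (or, after absorbing the Lipschitz potentials, semiconvexity) along generalised geodesics of the product space, both to run the scheme cleanly and to match the limiting subdifferential with the cross-diffusion operator. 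This joint displacement convexity of the internal energy of the sum is the crux, and is precisely the structural input supplied by the framework of \cite{LabordeCDS}, to which the statement defers.
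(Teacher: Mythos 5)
Your proposal is essentially a reconstruction of the proof of the cited result of Laborde, whereas the paper's own proof is a two-line verification that the hypotheses of that result apply: it identifies the associated energy $E_\eta$ (the same functional you write down), checks that $E_\eta(\rho_{1,0},\rho_{2,0})<+\infty$ under Hypothesis \ref{H1}, and notes that the only remaining work is transplanting Laborde's argument from a bounded interval with no-flux boundary conditions to $\T$. Your JKO/minimising-movement outline is the correct underlying machinery and is consistent with what \cite{LabordeCDS} actually does, so in that sense the two routes agree; the difference is one of economy, since the paper deliberately defers all of the scheme, the flow-interchange estimate, and the compactness argument to the reference rather than redoing them.

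The one substantive step in the paper's proof that is absent from yours is the verification that the scheme can be initialised at all. Hypothesis \ref{H1} controls only the entropy of the \emph{sum} $\rho_{1,0}+\rho_{2,0}$, while $E_\eta$ contains the separate entropies $\eta\int_\T \rho_{i,0}\log\rho_{i,0}\diff x$; you implicitly assume $E_\eta(\rho_{1,0},\rho_{2,0})<+\infty$ without justification. The paper closes this gap with the elementary pointwise inequality
\begin{equation*}
x\log x + y\log y \leqslant (x+y)\log(x+y) + 1, \qquad x,y\geqslant 0,
\end{equation*}
which, together with $V_1,V_2\in L^\infty(\T)$ (guaranteed by \ref{H3} and the one-dimensional Sobolev embedding), shows that the individual entropies are finite whenever the entropy of the sum is. Without this observation your direct-method argument for the first minimisation step is not known to have a finite infimum at the initial datum, so you should add it. The remaining points you raise (semiconvexity along generalised geodesics of the non-separable term, identification of the limiting subdifferential) are genuine issues but are exactly the structural input of \cite{LabordeCDS}, which the paper is entitled to cite rather than reprove.
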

\begin{proof}
    Up to the constants $\eta$ and $(1-2\eta)$, the proof of this result follows directly from \cite[Theorem 2.2]{LabordeCDS} in the case `$m=1$'. 
    In particular, the Theorem of Laborde postulates the existence of solutions under the assumption that the associated energy is finite. 
    In this case, the associated energy functional is given by 
    \[
    E_\eta(\rho_1,\rho_2) := \int_{\T} [(1-2\eta)(\rho_1+\rho_2)\log(\rho_1+\rho_2) + \eta \rho_1\log(\rho_1)+ \eta \rho_2\log(\rho_2) + V_1 \rho_1 + V_2\rho_2 ]\diff x.
    \]
    Moreover, since
    \[
    x\log(x) + y\log(y) \leqslant (x+y)\log(x+y) +1
    \]
    for all $x,y \geqslant 0$, the requirement $E_{\eta}(\rho_{1,0},\rho_{2,0})< + \infty$ is satisfied whenever the initial data satisfies Hypothesis \ref{H1} and the potentials satisfy $V_1, V_2\in L^\infty(\T)$. 
    Since we assume that $V_1,V_2 \in W^{1,\infty}(\T)$ this is indeed the case, and hence, it is simply left to consider the adaptation of the result of Laborde from a bounded interval with no-flux boundary conditions to the case of the $1$-dimensional flat torus.
\end{proof}

Since the additional self-diffusion terms in System \eqref{eq:regcdid} provide a regularising effect, a classical bootstrapping argument (which we defer to Section \hyperref[sec:four]{Four}) shows that, under sufficient regularity assumptions on the potentials and initial data, weak solutions to System \eqref{eq:regcdid} are, in fact, smooth and uniformly positive. 

\medskip

Using this additional smoothness, we calculate the time dissipation for the energy \eqref{ener:intro} and, subsequently, bound the quantity $f(r)$ in the space $L^\infty([0,T];BV(\T))$ via a Gr\"onwall type argument. 
In particular, this bound will not depend on the choice of $\eta$.
Moreover, in establishing this $BV$ estimate, we recall that the variables $(\sigma, r)$ denote the change of co-ordinates given by 
\[
\sigma:=\rho_{1} + \rho_{2} \ \ \ {\rm{and}}\ \ \ r:=\frac{\rho_{1}}{\sigma}.
\]
Meanwhile, the non-linearity $f(r)$ relates to the logarithmic ratio between the two species via the following equality.

\[
f(r) := \log\bigg(\frac{r}{1-r}\bigg) = \log\bigg(\frac{\rho_1}{\rho_2}\bigg).
\]

Motivated by the above equality, we perform the transformation of variables $(\rho_1,\rho_2) \mapsto (\sigma,f(r))$ and, further, make the change of potentials:
\begin{equation}\label{def:VW}
V := \partial_x(V_1 -V_2), \ W := \partial_xV_2.
\end{equation}

\begin{remark}
\begin{itemize}
\item For the remainder of this section, we tacitly assume that $V,W \in C^\infty(\T)$ and, further, assume the existence of smooth and uniformly positive solutions to System \eqref{eq:weaketacdid}. 
Such assumptions are fully justified in Section \hyperref[sec:four]{Four}.
\item Further, we clarify that, when a function or solution $g\co D \to \R$ is referred to as uniformly positive, we mean \emph{strict uniform positivity} in the sense that there exists $c>0$ such that $g(x) > c > 0$ for all $x\in D$.
\end{itemize} 
\end{remark}
\begin{proposition}\label{changevar}
       Let $(\rho_1,\rho_2)$ denote a smooth solution to the $\eta$-Regularised system \eqref{eq:regcdid}. 
       The variable $\sigma$ satisfies the equality
       \begin{equation}\label{eq:sigmaeq}
        \partial_t\sigma = \partial_{x}((1-\eta)\partial_x\sigma+ \sigma W + r\sigma V) \text{ on } (0,T)\times \T.
        \end{equation}
        If, in addition, $\rho_1,\rho_2$ admit uniformly positive densities on $[0,T]\times \T$, then the variable $f(r)$ satisfies the equality
        \begin{equation}\label{eq:f(r)}
        \begin{split}
           \partial_t f(r) & = \eta\partial_{xx}^2f(r) -VW + \partial_x V  \\
           & + (\partial_x f(r) + V)\bigg(\partial_x\log(\sigma) +W - \eta (2r-1)\partial_xf(r)\bigg)
           \\
            & + ((1-\eta)-(1-2\eta)r)\partial_x f(r) V.
        \end{split}
        \end{equation}
\end{proposition}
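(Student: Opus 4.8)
The plan is to derive both evolution equations directly by substituting the definitions $\sigma = \rho_1 + \rho_2$ and $f(r) = \log(\rho_1/\rho_2)$ into the $\eta$-Regularised System \eqref{eq:regcdid} and performing the chain rule carefully. Since we are assuming the solutions are smooth (and, for the second equation, uniformly positive), all manipulations are classical and the content is purely computational. For the $\sigma$ equation, I would simply add the two PDEs in \eqref{eq:regcdid}. The self-diffusion terms combine to $\eta \partial_{xx}^2 \sigma$, and the pressure terms combine neatly because $\rho_1 \partial_x \log\sigma + \rho_2 \partial_x \log\sigma = \sigma \partial_x \log\sigma = \partial_x \sigma$, giving an $(1-2\eta)\partial_{xx}^2\sigma$ contribution. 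Collecting with the $\eta\partial_{xx}^2\sigma$ term from self-diffusion yields $(1-\eta)\partial_{xx}^2\sigma$. The remaining drift terms are $\partial_x(\rho_1 \partial_x V_1 + \rho_2 \partial_x V_2)$; rewriting $\rho_1 = r\sigma$, $\rho_2 = (1-r)\sigma$ and using $V = \partial_x(V_1-V_2)$, $W = \partial_x V_2$ recovers exactly the flux $\sigma W + r\sigma V$ inside \eqref{eq:sigmaeq}.

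For the $f(r)$ equation, which is the more delicate of the two, I would first note that $f(r) = \log\rho_1 - \log\rho_2$, so $\partial_t f(r) = \partial_t \rho_1/\rho_1 - \partial_t \rho_2/\rho_2$ and similarly $\partial_x f(r) = \partial_x\log\rho_1 - \partial_x\log\rho_2$. The strategy is to compute $\partial_t\log\rho_i = (\partial_t\rho_i)/\rho_i$ for each species separately from \eqref{eq:regcdid}, expand the divergence form $\partial_x(\rho_i \partial_x(\cdots))/\rho_i$ using the product rule, and then subtract. Each species contributes a term $\eta(\partial_{xx}^2\rho_i)/\rho_i$ from self-diffusion and a term $\partial_{xx}^2 p_i + \partial_x\log\rho_i\,\partial_x p_i$ from the pressure/potential flux, where $p_i := (1-2\eta)\log\sigma + V_i$. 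The uniform positivity assumption is exactly what makes division by $\rho_i$ legitimate. The key intermediate identities I expect to use are the rewriting of $\partial_x\log\rho_1$ and $\partial_x\log\rho_2$ in terms of $\partial_x\log\sigma$ and $\partial_x f(r)$: since $r = \rho_1/\sigma$ and $f(r) = \log(r/(1-r))$, one has the clean relations $\partial_x\log\rho_1 = \partial_x\log\sigma + (1-r)\partial_x f(r)$ and $\partial_x\log\rho_2 = \partial_x\log\sigma - r\,\partial_x f(r)$, which follow from $f'(r) = 1/(r(1-r))$ and $\partial_x r = r(1-r)\partial_x f(r)$.

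The main obstacle, and the place where bookkeeping errors are most likely, is organizing the second-order and mixed first-order terms so that they assemble into the stated right-hand side of \eqref{eq:f(r)}. The self-diffusion contribution $\eta(\partial_{xx}^2\rho_1/\rho_1 - \partial_{xx}^2\rho_2/\rho_2)$ must be reduced to $\eta\partial_{xx}^2 f(r) - \eta(2r-1)(\partial_x f(r))\partial_x f(r)$-type quantities; this requires expressing $\partial_{xx}^2\log\rho_i$ via $\partial_{xx}^2 f(r)$, $\partial_{xx}^2\log\sigma$ and squares of first derivatives, and the cross terms are where the factor $(2r-1)$ emerges. I would handle this by writing $\partial_{xx}^2\log\rho_i = \partial_{xx}^2\rho_i/\rho_i - (\partial_x\log\rho_i)^2$, so that $\partial_{xx}^2\rho_i/\rho_i = \partial_{xx}^2\log\rho_i + (\partial_x\log\rho_i)^2$, and then substitute the two relations for $\partial_x\log\rho_i$ above; the difference of the squared terms produces precisely the $(2r-1)$ factor. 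The remaining pressure and potential terms, carrying the factors $(1-2\eta)$ from the regularized logarithm and the potential differences $V,W,\partial_x V$, must then be grouped to match the three displayed lines of \eqref{eq:f(r)}. I would verify the final grouping by isolating the coefficient of $\partial_x f(r)\,V$, which should collapse to $(1-\eta) - (1-2\eta)r$, as a consistency check that the algebra has been carried through correctly.
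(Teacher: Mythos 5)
Your proposal is correct, and the computation you outline does assemble into \eqref{eq:sigmaeq} and \eqref{eq:f(r)} (I checked: your route yields $\eta\partial_{xx}^2 f(r) + \partial_x f(r)\partial_x\log\sigma + \partial_x f(r)W - \eta(2r-1)(\partial_x f(r))^2 + V\partial_x\log\sigma + \partial_x V + (1-r)\partial_x f(r)V$ plus the cancellation $2\eta + (1-2\eta) = 1$ in the coefficient of $\partial_x f(r)\partial_x\log\sigma$, which is exactly the expanded form of \eqref{eq:f(r)}). For the $\sigma$ equation you do precisely what the paper does. For the $f(r)$ equation, however, your decomposition is genuinely different from the paper's: the paper first derives an evolution equation for the intermediate variable $r = \rho_1/\sigma$ via the quotient rule, and only then passes to $f(r)$ by the chain rule, using $f'(r) = (r(1-r))^{-1}$ and $f''(r) = (f'(r))^2(2r-1)$ to produce the $\eta(2r-1)$ corrections; you instead exploit $f(r) = \log\rho_1 - \log\rho_2$ and compute $\partial_t\log\rho_i = \partial_t\rho_i/\rho_i$ for each species separately, with the $(2r-1)$ factor emerging from the difference of squares $(\partial_x\log\rho_1)^2 - (\partial_x\log\rho_2)^2$. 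Your route is arguably more symmetric and avoids the somewhat opaque $f''$ manipulation, at the cost of obscuring the equation satisfied by $r$ itself (which the paper's intermediate step makes explicit and which is conceptually useful elsewhere, e.g.\ in Corollary \ref{maincorollary} where bounds on $\partial_x r$ are needed). One small imprecision in your consistency check: the \emph{total} coefficient of $\partial_x f(r)\,V$ in the fully expanded right-hand side of \eqref{eq:f(r)} is $(1-r)$, not $(1-\eta)-(1-2\eta)r$; the latter is only the coefficient appearing in the last displayed line, and matching the three-line form also requires accounting for the $-\eta(2r-1)\partial_x f(r)\,V$ term hidden inside the product on the middle line, since $-\eta(2r-1) + (1-\eta)-(1-2\eta)r = 1-r$. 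This does not affect the validity of the argument.
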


\begin{proof} 
    Since the densities $\rho_1,\rho_2$ are assumed to be smooth, System \eqref{eq:regcdid} is satisfied pointwise everywhere on $(0,T)\times \T$.
    Moreover, in summing System \eqref{eq:regcdid} over $i\in \{1,2\}$, we obtain the equation governing $\sigma$. 
    \begin{flalign*}
        \partial_t\sigma & = (1-\eta) \partial_{xx}^2(\rho_1+\rho_2) + \partial_x((\rho_1+\rho_2)\partial_xV_2+ \rho_1\partial_x(V_1-V_2)) 
        \\
        & = (1-\eta)\partial^2_{xx}\sigma + \partial_x(\sigma W + r\sigma V).
    \end{flalign*}
    Further, assuming that $\rho_1,\rho_2$ are uniformly positive, we apply the product rule to the quotient $\rho_1\sigma^{-1}$. 
    From this, we deduce the following equality for $\partial_t r$.
    \[
    \partial_t r = \frac{1}{\rho_1 + \rho_2}\partial_t \rho_1 - \partial_t(\rho_1+\rho_2)\frac{\rho_1}{(\rho_1+\rho_2)^2} = \frac{1}{\sigma}\partial_t\rho_1- \frac{r}{\sigma}\partial_t\sigma
    \]
    From System \eqref{eq:regcdid}, it follows that
    \begin{flalign*}
        \frac{1}{\sigma}\partial_t\rho_1 & = \frac{1}{\sigma}\partial_x(r((1-2\eta)\sigma\partial_x\log(\sigma)+\sigma W) + r\sigma V + \eta \partial_x(r\sigma)) 
        \\
        & = \frac{r}{\sigma}\partial_x((1-2\eta)\sigma\partial_x\log(\sigma) + \sigma W) + \eta \frac{r}{\sigma}\partial_{xx}^2\sigma + \frac{1}{\sigma}\partial_x(r\sigma V) \\
        & + \eta \partial_{xx}^2r +  2\eta\partial_{x}r\partial_x\log(\sigma) 
        + \partial_x r((1-2\eta)\partial_x\log(\sigma)+W).
        \end{flalign*}
        Similarly, from Equation \eqref{eq:sigmaeq}, it follows that
        \begin{flalign*}
        \frac{r}{\sigma}\partial_t\sigma & =  \frac{r}{\sigma}\partial_x((1-2\eta)\sigma\partial_x\log(\sigma) + \sigma W) + \eta \frac{r}{\sigma}\partial_{xx}^2\sigma + \frac{r}{\sigma}\partial_x(r\sigma V).
     \end{flalign*}
    Consequently, $\partial_t r$ satisfies
    \begin{flalign*}
    \partial_t r
    & =\partial_x r((1-2\eta)\partial_x\log(\sigma)+W) + \frac{1-r}{\sigma}\partial_x(r\sigma V) + \eta \partial_{xx}^2r + 2\eta\partial_xr\partial_x\log(\sigma)
    \\
    & =  \eta\partial_{xx}^2r + (\partial_x\log(\sigma) +W)(\partial_x r + r(1-r)V) -r(1-r)VW + (1-r)\partial_x(rV).
    \end{flalign*}
Finally, to produce the equation governing $\partial_tf(r)$ we recognise that 
$f'(r) = (r(1-r))^{-1}$.
Moreover, since $\rho_1$ and $\rho_2$ are assumed to be uniformly positive, it follows that $r$ is bounded away from the boundary of the interval $(0,1)$ and hence, $f'(r)$ is smooth on $(0,T)\times \T$.
Subsequently, we are justified in applying the chain rule to the function $\partial_t f(r)$. 
From this application of the chain rule, we deduce the following equality.
\begin{equation}\label{eq:intermediate1}
\begin{split}
    \partial_tf(r) & = f'(r)\partial_tr
    \\
    & = (f'(r)\partial_x r + V)(\partial_x\log(\sigma) +W)- VW +\frac{1}{r}\partial_x(rV)  + \eta f'(r)\partial_{xx}^2r
    \\
    & = (\partial_x f(r) + V)(\partial_x\log(\sigma) +W) - \eta(\partial_x r)^2 f''(r) 
    \\
    & - VW +(1-r)\partial_xf(r)V + \partial_x V  + \eta \partial_{xx}^2f(r) .
\end{split}
\end{equation}
Since $f''(r) = (f'(r))^2(2r-1)$, it follows that
\begin{equation}\label{eq:intermediate2}
\begin{split}
(\partial_xf(r) & + V)(\partial_x\log(\sigma) +W) - \eta(\partial_x r)^2 f''(r) 
\\
& =  (\partial_xf(r) + V)\bigg(\partial_x\log(\sigma) +W - \eta(2r-1)\partial_xf(r)\bigg)+\eta(2r-1)\partial_x f(r)V.
\end{split}
\end{equation}
By substituting Equality \eqref{eq:intermediate2} into Equation \eqref{eq:intermediate1}, we conclude that Equation \eqref{eq:f(r)} holds.
\end{proof}

\subsection{The $BV$ Estimates}
\begin{thm}\label{mainestimate}
        Let $(\rho_1,\rho_2)$ denote a solution to System \eqref{eq:regcdid} for which $\rho_1,\rho_2$ admit smooth and uniformly positive densities. 
        Further, let $(\sigma,f(r))$ denote the corresponding smooth solutions to Equations \eqref{eq:sigmaeq}, \eqref{eq:f(r)} with initial data $(\sigma_0,f(r_0))$. 
        There exists a constant $C \in \R$, which depends only on $T, W, V$ and $TV(f(r_0))$, such that 
        \[
        \|\partial_x f(r)\|_{L^\infty([0,T];L^1(\T))}< C.
        \] 
    \end{thm}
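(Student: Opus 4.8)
The plan is to control the energy \eqref{ener:intro}, that is $\int_\T |\partial_x u(t,x)|\,\diff x$ with $u := f(r) + V_1 - V_2$, and to show it obeys a Grönwall inequality whose constants do not depend on $\eta$. Writing $w := \partial_x u = \partial_x f(r) + V$, it suffices to bound $\sup_{t\in[0,T]}\int_\T |w|\,\diff x$ uniformly in $\eta$: indeed $\|\partial_x f(r)\|_{L^\infty([0,T];L^1(\T))} \le \sup_t \int_\T |w|\,\diff x + \|V\|_{L^1(\T)}$, and $\|V\|_{L^1(\T)}<\infty$ by \ref{H3}. The starting observation, extending \eqref{eq:utransport} to $\eta>0$, is that Equation \eqref{eq:f(r)} can be recast as
\[
\partial_t u = \eta\,\partial_{xx}^2 f(r) + \beta\,\partial_x u + g(u,x),
\]
where the source $g(u,x)$ satisfies $|\partial_u g|\le \tfrac14\|V\|_{L^\infty(\T)}^2$ and $x\mapsto (\partial_x g)(u(x),x)$ is bounded in $L^1(\T)$ uniformly in $\eta$, while the drift $\beta$ (which gathers $\partial_x\log(\sigma)$, $W$, the term $(1-r)V$ and every $\eta$-dependent coefficient, and which may itself involve $\partial_x u$) is left completely unconstrained.

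Differentiating this identity in $x$ and using that $V$ is time-independent yields, for $w=\partial_x u$, an equation in divergence form,
\[
\partial_t w = \eta\,\partial_{xx}^2 w - \eta\,\partial_{xx}^2 V + \partial_x(\beta\,w) + (\partial_u g)\,w + (\partial_x g)(u,\cdot).
\]
I would then compute $\tfrac{d}{dt}\int_\T |w|\,\diff x = \int_\T \sgn(w)\,\partial_t w\,\diff x$, and observe that two contributions vanish. First, the viscous term is dissipative: $\int_\T \sgn(w)\,\eta\,\partial_{xx}^2 w\,\diff x \le 0$. Second, and decisively, the divergence term cancels: integrating by parts on the torus,
\[
\int_\T \sgn(w)\,\partial_x(\beta\,w)\,\diff x = \int_\T \big(\beta\,\partial_x|w| + (\partial_x\beta)\,|w|\big)\diff x = \int_\T \partial_x(\beta\,|w|)\,\diff x = 0.
\]
This cancellation holds \emph{irrespective of the size or regularity of} $\beta$, which is exactly what allows the estimate to survive the uncontrollable second derivative $\partial_{xx}^2\log(\sigma)$ hidden in $\partial_x\beta$, as well as the quadratic-in-$w$ contributions (such as the $\eta(2r-1)(\partial_x u)^2$ and $(1-r)V\,\partial_x u$ terms) which have been absorbed into $\beta\,\partial_x u$ rather than treated as forcing.

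What remains is benign. The coefficient $\partial_u g$ is uniformly bounded because $g$ depends on $u$ only through the bounded quantity $r=r(u,x)\in(0,1)$, giving $|\partial_u g|\le \tfrac14\|V\|_{L^\infty(\T)}^2<\infty$ since $V\in W^{1,1}(\T)\hookrightarrow L^\infty(\T)$; this furnishes the Grönwall factor. The source $(\partial_x g)(u,\cdot)$, obtained by differentiating $g$ in its explicit $x$-dependence, consists only of products of bounded functions of $r$ with the potentials and their derivatives, and so is bounded in $L^1(\T)$ uniformly in $\eta$ via \ref{H3} and \ref{H4} (precisely, one needs $\partial_x V, \partial_{xx}^2 V\in L^1(\T)$ together with $W\in L^\infty(\T)$ and $\partial_x W\in L^1(\T)$), while the leftover $-\eta\,\partial_{xx}^2 V$ is an $\eta$-small $L^1$ source. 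Collecting these,
\[
\frac{d}{dt}\int_\T |w|\,\diff x \le \tfrac14\|V\|_{L^\infty(\T)}^2 \int_\T |w|\,\diff x + C(V,W),
\]
and Grönwall's inequality together with $\int_\T |w(0,\cdot)|\,\diff x \le TV(f(r_0)) + \|V\|_{L^1(\T)}$ delivers the asserted bound with a constant depending only on $T$, $V$, $W$ and $TV(f(r_0))$.

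The main obstacle is the rigorous justification of $\tfrac{d}{dt}\int_\T|w|\,\diff x=\int_\T\sgn(w)\,\partial_t w\,\diff x$ and of the two vanishing identities through the non-smoothness of $\sgn$ at the zero set of $w$; I would handle this by replacing $\sgn$ with a smooth odd approximation $\sgn_\delta$, carrying out the integrations by parts (where the monotonicity of $\sgn_\delta$ renders the viscous contribution manifestly non-positive), and passing to the limit $\delta\to 0$ using the smoothness and uniform positivity of the solution guaranteed in Section \hyperref[sec:four]{Four}. The conceptual crux, however, is structural: the estimate is only possible because the system can be written so that every term we cannot control — the singular drift $\partial_{xx}^2\log(\sigma)$ and the quadratic terms in $\partial_x u$ — sits inside the divergence $\partial_x(\beta w)$ and is annihilated on testing against $\sgn(w)$, a cancellation that hinges on the precise algebraic form of \eqref{eq:f(r)} and on the regularization \eqref{eq:regcdid} having been chosen to preserve it uniformly in $\eta$.
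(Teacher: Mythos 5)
Your proposal is correct and follows essentially the same route as the paper: it dissipates the same energy $\int_{\T}|\partial_x f(r)+V|\diff x$, kills the uncontrollable terms by writing them as a divergence $\partial_x(\beta w)$ and testing against $\sgn(w)$ (the paper does this via its modulus-interchange lemma and Stokes' theorem on $\T$, plus Kato's inequality for the viscous term), and closes a Gr\"onwall inequality whose coefficient $\tfrac14\|V\|^2_{L^\infty(\T)}$ is exactly the paper's $\|V^2\|_{L^\infty(\T)}$-term obtained there through the bound $|\partial_x r|\le \tfrac14|\partial_x f(r)|$, i.e.\ through $r(1-r)\le\tfrac14$. The only differences are organizational (you package the computation through the transport form $\partial_t u=\eta\partial^2_{xx}f(r)+\beta\partial_x u+g(u,x)$ and the splitting $\partial_x[g(u(x),x)]=\partial_u g\,\partial_x u+(\partial_x g)(u,\cdot)$, and you regularize $\sgn$ instead of invoking the appendix lemmas), not substantive.
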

\begin{proof}   
    To establish the result, we consider the dissipation of the first order energy given by 
    \begin{equation}\label{eq:firstenergy}
    \int_{\T}|\partial_x f(r) +V| \diff x.    
    \end{equation} 
    Since we assume that $f(r)$ and $\sigma$ are assumed to be smooth on $[0,T]\times \T$ and since this set is compact, we the emphasise that the energy \eqref{eq:firstenergy} and, more generally, any Lipschitz continuous functions which may be composed with $f(r)$ or $\sigma$ (or their respective derivatives) will always be summable on $[0,T]\times \T$. 
    Moreover, the following energy dissipation equality is justified as a consequence of Lemma \ref{gradientinterchangelemma}.
    \begin{equation}\label{eq:ede}
    \begin{split}
    \int_{\T} |\partial_xf(r_s)+ V| \diff x & = \int_{\T} |\partial_xf(r_0)+ V |  \diff x 
     \\ & + \int_0^s\int_{\T} \sgn(\partial_xf(r_t)+ V)\partial_t\partial_xf(r_t) \diff x \diff t.
    \end{split}
    \end{equation}
    Ultimately, we would like to apply a Gr\"onwall argument to the energy \eqref{eq:firstenergy}, hence, it remains to bound the right hand side of Equation \eqref{eq:ede} in terms of \eqref{eq:firstenergy}.
    We begin by plugging Equation \eqref{eq:f(r)} into the righter most term of Equation \eqref{eq:ede} acknowledging that, since $f(r)$ is assumed to be smooth, the order of partial differentiation can be swapped.
    \begin{equation}\label{eq:firstexpression}
    \begin{split}
        \int_0^s & \int_{\T} \sgn(\partial_xf(r)+ V )\partial_x\partial_tf(r) \diff x \diff t = \int_0^s\int_{\T} \eta \partial_{xxx}^3f(r) \sgn(\partial_xf(r) +V) \diff x \diff t
        \\
        + &\int_0^s\int_{\T}\sgn(\partial_xf(r) +V)\partial_x\bigg((\partial_x\log(\sigma) +W - \eta (2r-1)\partial_xf(r))(\partial_x f(r) + V)\bigg) \diff x \diff t 
        \\
        + & \int_0^s\int_{\T}\sgn(\partial_xf(r) +V)( \partial_{xx}^2 V -\partial_x(VW)) \diff x \diff t
        \\
        + & \int_0^s\int_{\T}\sgn(\partial_xf(r) +V)\partial_x\bigg(((1-\eta)-(1-2\eta)r)\partial_x f(r)V\bigg) \diff x \diff t =: I_1.
    \end{split}
    \end{equation}
    We first address the fourth and second lines of Equation \eqref{eq:firstexpression}.
    In particular, since the term $((1-\eta)-(1-2\eta)r)$ is positive for $\eta \in (0,\frac{1}{2}], r \in [0,1]$, it then follows that
    \begin{equation}\label{eq:rearrangement}
    \begin{split}
    & \sgn(\partial_xf(r) +V)\partial_x\bigg(((1-\eta)-(1-2\eta)r)\partial_x f(r)V\bigg) 
    \\
    & =  \sgn\bigg(((1-\eta)-(1-2\eta)r)(\partial_x f(r)+V)\bigg)\partial_x\bigg(((1-\eta)-(1-2\eta)r)(\partial_x f(r)+V)V\bigg) 
    \\
    & -  \sgn\bigg(((1-\eta)-(1-2\eta)r)(\partial_x f(r)+V)\bigg)\partial_x\bigg(((1-\eta)-(1-2\eta)r)V^2\bigg).
    \end{split}
    \end{equation}
    By applying Lemma \ref{modulusinterchangelemma} to Equation \eqref{eq:rearrangement} and to the expression given in the second line of Equation \eqref{eq:firstexpression}, we derive the following equalities which hold almost everywhere on $[0,T]\times \T$.
    \begin{align}\label{eq:modsub1}
    \begin{split}
    \sgn(\partial_xf(r) +V)& \partial_x\bigg(((1-\eta)-(1-2\eta)r)\partial_x f(r)V\bigg) 
    \\
    = & \partial_x\bigg(((1-\eta)-(1-2\eta)r)(|\partial_x f(r)+V|)V\bigg)
    \\
    - & \sgn(\partial_x f(r)+V)\partial_x\bigg(((1-\eta)-(1-2\eta)r)V^2\bigg),
    \end{split}
    \\
     \begin{split}\label{eq:modsub2}
         \sgn(\partial_xf(r) +V)&\partial_x\bigg((\partial_x\log(\sigma) +W - \eta (2r-1)\partial_xf(r))(\partial_x f(r) + V)\bigg) 
         \\
         = 
         &\partial_x\bigg((\partial_x\log(\sigma) +W - \eta (2r-1)\partial_xf(r))(|\partial_x f(r) + V|)\bigg).
     \end{split}
    \end{align}
    Substituting Equations \eqref{eq:modsub1} and \eqref{eq:modsub2} into the fourth and second lines of Equation \eqref{eq:firstexpression} respectively, the subsequent equality for $I_1$ follows.
    \begin{equation}\label{eq:again}
    \begin{split}
        I_1 & = \int_0^s\int_{\T} \eta \partial_{xx}^2(\partial_xf(r) +V)\sgn(\partial_xf(r) +V) \diff x \diff t
        \\
        & + \int_0^s\int_{\T}\partial_x\bigg((\partial_x\log(\sigma) +W - \eta (2r-1)\partial_xf(r))(|\partial_x f(r) + V|)\bigg) \diff x \diff t 
        \\
         & + \int_0^s\int_{\T}\partial_x\bigg(((1-\eta)-(1-2\eta)r)V(|\partial_x f(r)+V|)\bigg) \diff x \diff t
         \\
        &+  \int_0^s\int_{\T}\sgn(\partial_xf(r) +V)( (1-\eta)\partial_{xx}^2 V -\partial_x(VW)) \diff x \diff t
        \\
         & - \int_0^s\int_{\T}\sgn(\partial_xf(r) +V)\partial_x\bigg(((1-\eta)-(1-2\eta)r)V^2\bigg)  \diff x \diff t.
         \end{split}
   \end{equation}
To address the first term in Equation \eqref{eq:again}, we recognise that, since $f(r)$ is assumed to be smooth, the inequality
\begin{equation}\label{eq:katoapplied}
\sgn(\partial_xf(r) + V)\partial_{xx}^2(\partial_xf(r) + V) \leqslant \partial_{xx}^2(|\partial_xf(r) +V|) 
\end{equation}
is satisfied in the sense of distributions. 
Inequality \eqref{eq:katoapplied} follows from Kato's Inequality (see Lemma \ref{eq:Kato}).
From Kato's result, it also follows that 
\[
\partial_{xx}^2(|\partial_xf(r_t) + V|) \in \mathscr{M}(\T)
\]
for almost every $t \in [0,T]$.
Moreover, from Inequality \eqref{eq:katoapplied}, it follows that
\begin{equation}\label{eq:andagain}
\begin{split}
       I_1 & \leqslant \int_0^s\int_{\T} \partial_{xx}^2( \eta|\partial_xf(r) +V|)\diff x \diff t
       \\
        & + \int_0^s\int_{\T}\partial_x\bigg((\partial_x\log(\sigma) +W - \eta (2r-1)\partial_xf(r))(|\partial_x f(r) + V|)\bigg) \diff x \diff t 
        \\
        & + \int_0^s\int_{\T}\partial_x\bigg(((1-\eta)-(1-2\eta)r)V(|\partial_x f(r)+V|)\bigg) \diff x \diff t
        \\
         &+  \int_0^s\int_{\T}\sgn(\partial_xf(r) +V)( (1-\eta)\partial_{xx}^2 V -\partial_x(VW)) \diff x \diff t
        \\
         & - \int_0^s\int_{\T}\sgn(\partial_xf(r) +V)\partial_x\bigg(((1-\eta)-(1-2\eta)r)V^2\bigg) \diff x \diff t.    
\end{split}
\end{equation}
Now that the first three terms of Equation \eqref{eq:andagain} are expressed as integrals of spatial gradients, we may use that $\T$ is without boundary to simplify this upper bound for $I_1$.
In particular, we recognise that, as a consequence Stokes' Theorem, the equality
\begin{equation}\label{eq:stokes}
    \int_{\T} \partial_x g \diff x = 0
\end{equation}
holds.
In particular, Equation \eqref{eq:stokes} holds for all $g\in BV(\T)$  (see \cite[Theorem 5.1]{evansgariepy}) and hence, Equation \eqref{eq:stokes} applies to the first (and second and third) line of Equation \eqref{eq:andagain}, even when $\partial_{xx}^2(|\partial_xf(r) + V|)$ is only a measure.
Consequently, the first three terms on the right-hand side of Inequality \eqref{eq:andagain} vanish and the following inequality is thus satisfied.
\begin{flalign*}
       I_1 & \leqslant \underbrace{\int_0^s\int_{\T}\sgn(\partial_xf(r) +V)( (1-\eta)\partial_{xx}^2 V -\partial_x(VW)) \diff x \diff t}_{=:I_2}\\
         & - \underbrace{\int_0^s\int_{\T}\sgn(\partial_xf(r) +V)\partial_x\bigg(((1-\eta)-(1-2\eta)r)V^2\bigg) \diff x \diff t}_{=:I_3}.
\end{flalign*}
It is immediately clear from H\"older's inequality that 
\begin{equation}\label{eq:IIbound}
    I_2 \leqslant s(\|V\|_{W^{2,1}(\T)} + \|VW\|_{W^{1,1}(\T)}),
\end{equation}
hence, we are left to bound $I_3$.
In particular, 
\[
((1-\eta)-(1-2\eta)r), \ (1-2\eta)\in [0,1] \ \text{for} \ r \in [0,1], 
 \ \eta \in \bigg(0,\frac{1}{2}\bigg]
\]
and hence, expanding the derivative in $I_3$, Inequality \eqref{eq:IIIbound} follows.
\begin{equation}\label{eq:IIIbound}
    I_3 \leqslant s\|V^2\|_{W^{1,1}(\T)} + \|V^2\|_{L^\infty(\T)}\|\partial_x r\|_{L^1([0,s]\times\T)}.
\end{equation}
Now, using that $f'(r) \geqslant 4$ for any $r\in [0,1]$,  the following upper bound for $I_3$ holds by use of Equation \eqref{eq:IIIbound}, in conjunction with the triangle inequality.
\begin{equation}\label{eq:IIIbound2}
\begin{split}
    I_3 -  s\|V^2\|_{W^{1,1}(\T)}& \leqslant  \|V^2\|_{L^\infty(\T)}\|\partial_x r\|_{L^1([0,s]\times\T)} 
    \\
    &\leqslant \|V^2\|_{L^\infty(\T)}(\inf_\tau f'(\tau))\|\partial_x r\|_{L^1([0,s]\times\T)}
    \\
    & \leqslant  \|V^2\|_{L^\infty(\T)}\|\partial_x f(r)\|_{L^1([0,s]\times\T)} 
    \\
    & \leqslant  \|V^2\|_{L^\infty(\T)}(\|\partial_x f(r)+V\|_{L^1([0,s]\times\T)} + \|V\|_{L^1([0,s]\times\T)}).
\end{split}
\end{equation}
Ultimately, we deduce the desired energy dissipation inequality by combining Inequalities \eqref{eq:IIbound} and \eqref{eq:IIIbound2}, which together bound $I_1$, with the Energy Dissipation Equality \eqref{eq:ede}.
The inequality reads as follows:
\begin{equation}\label{eq:EDI}
\begin{split}
   & \int_{\T} |\partial_x f(r_s)+ V| \diff x \leqslant \int_{\T} |\partial_x f(r_0)+ V|  \diff x  + \|V^2\|_{L^\infty(\T)}\int_0^s\int_{\T} |\partial_x f(r_t) + V| \diff x \diff t \\
   & + s(\|V\|_{W^{2,1}(\T)} + \|VW\|_{W^{1,1}(\T)}+ \|V^2\|_{W^{1,1}(\T)} + \|V^2\|_{L^\infty(\T)}\|V\|_{L^1(\T)}).
\end{split}
\end{equation}
For notational convenience, let
\begin{flalign*}
    \alpha(V,W)& :=\|V\|_{W^{2,1}(\T)} + \|VW\|_{W^{1,1}(\T)}+ \|V^2\|_{W^{1,1}(\T)} + \|V^2\|_{L^\infty(\T)}\|V\|_{L^1(\T)},\\
    \beta(V) & := \|V^2\|_{L^\infty(\T)}.
\end{flalign*}
The desired estimate now follows for any $s\in [0,T]$.
In particular, we derive this inequality by using the integral form of Gr\"onwall's inequality in conjunction with the Energy Dissipation Inequality \eqref{eq:EDI}. 
\begin{equation}\label{eq:Gronwall}
    \int_{\T} |\partial_x f(r_t)+ V| \diff x\bigg|_{t=s}\leqslant \bigg(\int_{\T} |\partial_x f(r_0)+ V|  \diff x + \alpha s\bigg)\exp(s\beta).
\end{equation} 
When $\partial_x f(r_0) \in L^1({\T})$ and the potentials $V,W$ are smooth, the left hand side of Inequality \eqref{eq:Gronwall} is finite and, consequently, it follows from Inequality \eqref{eq:Gronwall} that $\partial_xf(r) \in L^\infty([0,T];L^1(\T))$ with the bound on this quantity depending only on $T, V, W$ and $r_0$ through the constants $\alpha,\beta$ and through $\|\partial_x f(r_0)\|_{L^1(\T)}$.
\end{proof}
\begin{thm}\label{sumestimate}
Let $(\rho_1,\rho_2)$ be a weak solution of System \eqref{eq:regcdid} with initial data $(\rho_{1,0},\rho_{2,0})$. 
If the initial data satisfies Hypothesis \ref{H1}
then there exists a constant $C\in \R$ which depends only on $T, \|V_i\|_{W^{1,\infty}(\T)}$ and $\|\rho_{1,0}+\rho_{2,0}\|_{L\log L(\T)}$ such that the inequalities
\begin{flalign*}
  & \|\partial_x(\rho_1+\rho_2)\|_{L^\frac{3}{2}([0,T]\times\T)} < C,\\
  & \|\rho_1+\rho_2\|_{L^2([0,T];BV(\T))} < C  
\end{flalign*}
are satisfied.
\end{thm}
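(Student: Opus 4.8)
The plan is to extract both bounds from a single entropy–dissipation estimate for $\sigma=\rho_1+\rho_2$, using only Equation \eqref{eq:sigmaeq}. As in the preceding theorem, I would first reduce to the smooth, uniformly positive solutions furnished by \hyperref[sec:four]{Section Four}, so that the Boltzmann entropy $t\mapsto\int_\T \sigma\log\sigma\,\diff x$ is differentiable and $\log\sigma+1$ is an admissible multiplier; crucially, every constant produced below will be independent of the positivity floor and of $\eta$, so the estimate descends to the weak solution. Testing \eqref{eq:sigmaeq} against $\log\sigma+1$ and integrating by parts on $\T$ gives
\[
\frac{\diff}{\diff t}\int_\T \sigma\log\sigma\,\diff x = -(1-\eta)\int_\T\frac{(\partial_x\sigma)^2}{\sigma}\,\diff x-\int_\T\partial_x\sigma\,(W+rV)\,\diff x,
\]
where the first term is the favourable dissipation and the second is a drift cross-term to be absorbed.

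To control the cross-term I would use that $r\in[0,1]$ and that $V=\partial_x(V_1-V_2)$, $W=\partial_x V_2$ are bounded in terms of $\|V_i\|_{W^{1,\infty}(\T)}$, so that $|W+rV|\le M:=\|V\|_{L^\infty}+\|W\|_{L^\infty}$. Writing $|\partial_x\sigma|=\sigma^{-1/2}|\partial_x\sigma|\cdot\sigma^{1/2}$, Cauchy–Schwarz together with $\int_\T\sigma=2$ and Young's inequality (with parameter tuned to half the dissipation coefficient, and using $1-\eta\in[\tfrac12,1]$ to keep the constants $\eta$-independent) yields
\[
\frac{\diff}{\diff t}\int_\T \sigma\log\sigma\,\diff x \le -\tfrac14\int_\T\frac{(\partial_x\sigma)^2}{\sigma}\,\diff x + 2M^2.
\]
Integrating in time, bounding the entropy at the final time from below by a universal constant (e.g. $\sigma\log\sigma\ge-1/e$, or Jensen's inequality), and invoking Hypothesis \ref{H1} gives the master estimate $\int_0^T\!\int_\T (\partial_x\sigma)^2/\sigma\,\diff x\,\diff t \le C_1$, with $C_1$ depending only on $\|\rho_{1,0}+\rho_{2,0}\|_{L\log L(\T)}$, $M$ and $T$.

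The second claimed bound is then immediate: by Cauchy–Schwarz and $\int_\T\sigma=2$ one has $\|\partial_x\sigma\|_{L^1(\T)}^2\le 2\int_\T (\partial_x\sigma)^2/\sigma\,\diff x$, so integrating in time gives $\|\partial_x\sigma\|_{L^2([0,T];L^1(\T))}^2\le 2C_1$; combined with $\|\sigma\|_{L^1(\T)}=2$ and $\|\sigma\|_{BV(\T)}=\|\sigma\|_{L^1(\T)}+\|\partial_x\sigma\|_{L^1(\T)}$ (valid since $\sigma\in W^{1,1}(\T)$) this controls $\|\sigma\|_{L^2([0,T];BV(\T))}$. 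For the first bound I would upgrade the integrability of $\sigma$ beforehand: setting $u:=\sqrt\sigma$, the dissipation equals $\tfrac14\|\partial_x u\|_{L^2([0,T]\times\T)}^2$ while $\|u\|_{L^\infty([0,T];L^2(\T))}^2=2$, so the one-dimensional Ladyzhenskaya (Gagliardo–Nirenberg) interpolation $\|u\|_{L^6(\T)}^6\le C\|u\|_{L^2(\T)}^4\|u\|_{H^1(\T)}^2$ gives $\sigma=u^2\in L^3([0,T]\times\T)$ with a bound in terms of $C_1$ and $T$. Finally, writing $|\partial_x\sigma|^{3/2}=\big((\partial_x\sigma)^2/\sigma\big)^{3/4}\sigma^{3/4}$ and applying Hölder with exponents $\tfrac43$ and $4$,
\[
\|\partial_x\sigma\|_{L^{3/2}([0,T]\times\T)}^{3/2}\le\Big(\int_0^T\!\!\int_\T\tfrac{(\partial_x\sigma)^2}{\sigma}\Big)^{3/4}\Big(\int_0^T\!\!\int_\T\sigma^3\Big)^{1/4}\le C_1^{3/4}\,\|\sigma\|_{L^3([0,T]\times\T)}^{3/4},
\]
which is the first claimed estimate.

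The only genuinely delicate point is the justification of the entropy computation, since a weak solution may reach the vacuum where $\log\sigma$ degenerates; I expect this to be the main obstacle. I would handle it exactly as in the rest of Section \ref{sec:three}: perform the calculation on the smooth, strictly positive approximants of \hyperref[sec:four]{Section Four} (alternatively, replace $\sigma\log\sigma$ by $(\sigma+\delta)\log(\sigma+\delta)$ and let $\delta\downarrow0$), checking throughout that each constant is independent of both the positivity lower bound and $\eta$, so the estimates survive the passage to the limit. The remaining steps are then routine applications of Cauchy–Schwarz, Young's inequality and Gagliardo–Nirenberg.
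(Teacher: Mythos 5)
Your proposal is correct and follows essentially the same route as the paper: both rest on the entropy--dissipation bound $\sqrt{\sigma}\in L^2([0,T];H^1(\T))$, combine it with $\sqrt{\sigma}\in L^\infty([0,T];L^2(\T))$ and the parabolic embedding into $L^6([0,T]\times\T)$, and conclude by H\"older (your factorisation $|\partial_x\sigma|^{3/2}=\bigl((\partial_x\sigma)^2/\sigma\bigr)^{3/4}\sigma^{3/4}$ is identical to the paper's $\partial_x\sigma=2\partial_x\sqrt{\sigma}\cdot\sqrt{\sigma}$). The only difference is that you derive the entropy estimate from scratch on the positive approximants, whereas the paper imports it directly from \cite[Proposition 3.4]{LabordeCDS}.
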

\begin{proof}
    Following the proof of \cite[Proposition 3.4]{LabordeCDS}, one obtains the existence of a constant $A$ which depends only $T,\|V_i\|_{W^{1,\infty}(\T)}$ and $\|\rho_{1,0}+\rho_{2,0}\|_{L\log L(\T)}$ such that
    \[
    \|\sqrt{\rho_1+\rho_2}\|_{L^2([0,T];H^1(\T))} < A.
    \]
    
    A standard embedding result (see \cite[page 74, chapter II, $\mathsection 3$]{Ladyzhenskaya}) tells us that 
      \[
        L^2([0,T];H^1(\T)) \cap L^\infty([0,T];L^2(\T)) \hookrightarrow L^6([0,T]\times\T).
      \]
    It then follows that $\sqrt{\rho_1^{\tau}+ \rho_2^\tau}$ is bounded uniformly in $L^6([0,T]\times\T)$ and, hence, as a consequence of H\"older's inequality, the product 
      \[
      \partial_x(\rho_1+ \rho_2) = 2\partial_x(\sqrt{\rho_1+ \rho_2})\sqrt{\rho_1+ \rho_2} 
      \]
    must be bounded in $L^{\frac{3}{2}}([0,T]\times\T)$ by a constant $B$ depending only on $A$ and $T$.
    Finally, since $\rho_i$ are curves of probability measures, we apply H\"older's inequality to deduce the inequality
      \begin{flalign*}
      \|\rho_1+\rho_2\|_{L^2([0,T];BV(\T))} & = \|\rho_1+\rho_2\|_{L^2([0,T];L^1(\T))}+\|\partial_x(\rho_1+\rho_2)\|_{L^2([0,T];L^1(\T))} \\
      & = \|\rho_1+\rho_2\|_{L^2([0,T];L^1(\T))}+\|2\partial_x(\sqrt{\rho_1+ \rho_2})\sqrt{\rho_1+ \rho_2} \|_{L^2([0,T];L^1(\T))} \\
      & \leqslant
      2T^\frac{1}{2} + 2\|\partial_x\sqrt{\rho_1+\rho_2}\|_{L^2([0,T];L^2(\T))}\|\sqrt{\rho_1+\rho_2}\|_{L^\infty([0,T];L^2(\T))}
      \\
      & \leqslant
      2T^\frac{1}{2} + 4A.
      \end{flalign*}
      We conclude the proof by taking $C = \max\{ 2T^\frac{1}{2} + 4A, B\}$.
\end{proof}

    \begin{corollary}\label{maincorollary}
         Let $(\rho_1,\rho_2)$ be a smooth solution to the $\eta$-Regularised System \eqref{eq:regcdid}. 
         If $\rho_1,\rho_2$ admit uniformly positive densities on $[0,T]\times \T$ then there exists $C\in \R$ such that 
         \[
         \|\rho_i\|_{L^2([0, T];BV(\T))}< C
         \] for each $i \in \{1,2\}$.  
         In particular, the constant $C$ depends only on $\rho_{1,0},\rho_{2,0},V_1,V_2$ and $T$. 
    \end{corollary}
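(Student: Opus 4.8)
The plan is to combine the two a priori estimates already established --- the uniform-in-time $L^1$ bound on $\partial_x f(r)$ from Theorem \ref{mainestimate} and the $L^2([0,T];BV(\T))$ bound on $\sigma = \rho_1 + \rho_2$ from Theorem \ref{sumestimate} --- by writing the individual densities as $\rho_1 = r\sigma$ and $\rho_2 = (1-r)\sigma$ and controlling the resulting products via the product rule and the one-dimensional $BV$ embedding.

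First I would convert the estimate on $f(r)$ into an estimate on $r$. Since $f'(r) = (r(1-r))^{-1} \geq 4$ on $(0,1)$, the chain rule $\partial_x f(r) = f'(r)\partial_x r$ yields the pointwise bound $|\partial_x r| \leq \tfrac14 |\partial_x f(r)|$. As the solution is smooth and uniformly positive these are genuine functions, so integrating in space gives $\|\partial_x r(t)\|_{L^1(\T)} \leq \tfrac14\|\partial_x f(r(t))\|_{L^1(\T)}$ for every $t$. Together with $0 \leq r \leq 1$ (hence $\|r(t)\|_{L^1(\T)} \leq 1$), Theorem \ref{mainestimate} then furnishes a uniform-in-time bound $\|r\|_{L^\infty([0,T];BV(\T))} \leq C_1$, where $C_1$ depends only on $T, V_1, V_2$ and $TV(f(r_0)) = TV(\log(\rho_{1,0}/\rho_{2,0}))$, the latter being finite by Hypothesis \ref{H2}.

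Next I would estimate the total variation of $\rho_1 = r\sigma$ from $\partial_x \rho_1 = (\partial_x r)\sigma + r\,\partial_x\sigma$, so that, pointwise in time,
\[
TV(\rho_1(t)) \leq \int_{\T} \sigma(t)|\partial_x r(t)|\diff x + \int_{\T} r(t)|\partial_x \sigma(t)|\diff x \leq \|\sigma(t)\|_{L^\infty(\T)}\|\partial_x r(t)\|_{L^1(\T)} + TV(\sigma(t)),
\]
using $0 \leq r \leq 1$ in the second term. The first term is the delicate one: $\sigma$ is not bounded in $L^\infty$ uniformly in time, but the embedding $BV(\T) \hookrightarrow L^\infty(\T)$ gives $\|\sigma(t)\|_{L^\infty(\T)} \leq C_{\mathrm{emb}}\|\sigma(t)\|_{BV(\T)}$, and since $\|\partial_x r(t)\|_{L^1(\T)} \leq C_1/4$ holds uniformly in time, this term is bounded by $\tfrac{C_1 C_{\mathrm{emb}}}{4}\|\sigma(t)\|_{BV(\T)}$. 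Hence $TV(\rho_1(t)) \leq C_2\|\sigma(t)\|_{BV(\T)}$ for a constant $C_2$; adding $\|\rho_1(t)\|_{L^1(\T)} = 1$ and taking the $L^2$ norm in time yields
\[
\|\rho_1\|_{L^2([0,T];BV(\T))} \leq T^{1/2} + C_2\|\sigma\|_{L^2([0,T];BV(\T))},
\]
which is finite by Theorem \ref{sumestimate}. The identical argument applied to $\rho_2 = (1-r)\sigma$, via $\partial_x \rho_2 = -(\partial_x r)\sigma + (1-r)\partial_x\sigma$, gives the same conclusion. Tracking the constants through Theorems \ref{mainestimate} and \ref{sumestimate} shows the final $C$ depends only on $\rho_{1,0}, \rho_{2,0}, V_1, V_2$ and $T$, as required.

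The argument is elementary once the two main estimates are in hand, and I do not expect a serious obstacle. The one point requiring care --- and the reason the uniform-in-time ($L^\infty_t$) character of the $\partial_x f(r)$ estimate is essential, rather than a mere $L^2_t$ bound --- is the product term $\sigma\,\partial_x r$: pairing the $L^\infty_t L^1_x$ control of $\partial_x r$ against the $L^2_t$ control of $\|\sigma\|_{BV}$ (through $BV_x \hookrightarrow L^\infty_x$) is precisely what keeps the product in $L^2_t L^1_x$ and delivers the target regularity $L^2([0,T];BV(\T))$.
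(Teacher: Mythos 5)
Your argument is correct and follows essentially the same route as the paper's proof: convert the $L^\infty_t L^1_x$ bound on $\partial_x f(r)$ into one on $\partial_x r$ via $f'\geq 4$, then control $\partial_x(r\sigma)$ by the product rule, pairing $\|\partial_x r\|_{L^\infty_t L^1_x}$ against $\|\sigma\|_{L^2_t BV_x}$ through the embedding $BV(\T)\hookrightarrow L^\infty(\T)$. The only cosmetic difference is that you treat $\rho_2=(1-r)\sigma$ directly whereas the paper deduces it from $\rho_1$ and $\sigma$ by the triangle inequality.
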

    \begin{proof}
        If $(\rho_1,\rho_2)$ is a smooth solution to the $\eta$-Regularised System with uniformly positive densities then it follows from Proposition \ref{changevar} that $(\sigma,f(r))$ solves System \eqref{eq:sigmaeq}, \eqref{eq:f(r)}. 
        Further, as the initial datum are assumed to be uniformly positive and smooth, it follows that 
        \[
        \partial_x \log\bigg(\frac{\rho_{1,0}}{\rho_{2,0}}\bigg) = \partial_x f(r_0) \in L^1(\T),
        \]
        and hence, from Theorem \ref{mainestimate}, it follows that there exists $A_1 \in \R$ depending only on $V_1,V_2,T$ and $TV(f(r_0))$ such that 
        \[
        \|\partial_x f(r)\|_{L^\infty([0,T];L^1(\T))}< A_1.
        \]
        Moreover, recalling that $f'(r) \geqslant 4$ for $r\in [0,1]$, the following inequality holds.
        \begin{equation}\label{eq:bound1}
        \begin{split}
        \|\partial_x r\|_{L^\infty([0,T];L^1(\T))} & \leqslant \inf_{\tau\in [0,1]}f'(\tau) \|\partial_x r\|_{L^\infty([0,T];L^1(\T))} 
        \\
        & \leqslant \|\partial_xf(r)\|_{L^\infty([0,T];L^1(\T))} \leqslant A_1.
          \end{split}
        \end{equation}
        Similarly, it is shown in Theorem \ref{sumestimate} that, when $\|\sigma_0\|_{L\log L(\T)}<+ \infty$, there exists a constant $A_2\in \R$, depending only on $V_1, V_2, T$ and $\|\sigma_0\|_{L\log L(\T)}$, such that
        \begin{equation}\label{eq:bound2}
        \|\sigma\|_{L^2([0,T];BV(\T))} \leqslant A_2.
        \end{equation}
        
        Since we are working in $1$-dimension, we have the continuous embedding $BV(\T)\hookrightarrow L^\infty(\T) $. 
        Consequently, there exists a constant $A_3>0$ (which is independent of $\sigma$) for which
        \begin{equation}\label{eq:bound3}
        \|\sigma\|_{L^2([0,T];L^\infty(\T))} \leqslant A_3\|\sigma\|_{L^2([0,T];BV(\T))} \leqslant A_2 A_3.
        \end{equation}
        
        Collecting the bounds obtained in Inequalities \eqref{eq:bound1}, \eqref{eq:bound2} and \eqref{eq:bound3}, we obtain the following estimate on the product $\partial_x\rho_1 = \partial_x (\sigma r)$.
        \begin{flalign*}
         \|\partial_x\rho_1\|_{L^2([0,T];L^1(\T))} & = \|\partial_x (\sigma r)\|_{L^2([0,T];L^1(\T))}
         \\
            & \leqslant \|\sigma\|_{L^2([0,T];L^\infty(\T))}\|\partial_x r\|_{L^\infty([0,T];L^1(\T))} \\
            & + \|r\|_{L^\infty([0,T]\times \T)}\|\partial_x \sigma\|_{L^2([0,T];L^1(\T))}
            \\
            & \leqslant (A_1A_3+1)A_2.
        \end{flalign*}
        
        The claim for $i = 2$ then follows from the application from the triangle inequality.
         \[
        \|\partial_x\rho_2\|_{L^2([0,T];L^1( \T))} \leqslant \|\partial_x\rho_1\|_{L^2([0,T];L^1( \T))}+ \|\partial_x\sigma\|_{L^2([0,T];L^1( \T))} \leqslant (A_1A_3+2)A_2.
        \]

    Lastly, since $\rho_i$ is a curve of probability measures, its $L^1$ norm satisfies 
    \[
    \|\rho_i\|_{L^2([0,T];L^1(\T))} = T^{\frac{1}{2}}.
    \]
    Moreover, the $L^2([0,T];BV(\T))$ norm is then bounded by

     \[
     \|\rho_i\|_{L^2([0,T];BV(\T))} = \|\rho_i\|_{L^2([0,T];L^1(\T))} + \|\partial_x\rho_i\|_{L^2([0,T];L^1(\T))} \leqslant T^{\frac{1}{2}} +(A_1A_3 + i)A_2.
     \]
    \end{proof}
    
    \begin{remark}\label{dependency}
    To clearly present the proof of the above corollary, we omitted the specific dependence of the constant $C$ on the initial conditions and potentials.
    However, by tracking the constants $\alpha$ and $\beta$, introduced in the proof of Theorem \ref{mainestimate} and by studying Inequality \eqref{eq:Gronwall}, it can be seen that the constant $A_1$ may be chosen to depend only on the initial conditions through the quantity 
    \[
TV\bigg(\log\bigg(\frac{\rho_{1,0}}{\rho_{2,0}}\bigg)\bigg)
    \]
    and on the potentials via the quantities
    \begin{equation}\label{eq:potentialquantities}
     \|\partial_x(V_1-V_2)\partial_xV_1\|_{W^{1,1}(\T)}, \  \|V_1-V_2\|_{W^{2,2}(\T)}, \ \|V_1-V_2\|_{W^{1,\infty}(\T)}, \  \|V_2-V_1\|_{W^{3,1}(\T)}. 
    \end{equation}
    Similarly, from Theorem \ref{sumestimate}, it follows that the constant $A_2$ may be chosen to only depend on the initial data through the quantity $\|\rho_{1,0}+\rho_{2,0}\|_{L\log L(\T)}$,
    and on the potentials through the quantities \begin{equation}\label{eq:potentialquantities2}
     \|V_1\|_{W^{1,\infty}(\T)}, \|V_2\|_{W^{1,\infty}(\T)}.
     \end{equation}
    Moreover, since we are working in $1$-dimension, the continuous embeddings 
    \[
    W^{3,1}(\T) \hookrightarrow W^{2,2}(\T) \hookrightarrow W^{1,\infty}(\T) \text{ and } W^{2,1}(\T) \hookrightarrow W^{1,\infty}(\T)
    \]
    follow from the Sobolev Embedding Theory and hence, each norm in \eqref{eq:potentialquantities} and \eqref{eq:potentialquantities2} may be controlled solely in terms of $\|V_1\|_{W^{2,1}(\T)}$, $\|V_2\|_{W^{2,1}(\T)}$ and $\|V_1-V_2\|_{W^{3,1}(\T)}$. 
    Subsequently, the constants $A_1, A_2$ may be chosen such that they only depend on the potentials through these latter three norms.
    \end{remark}
        
    To be clear, in establishing the main existence theorem, the purpose of the $BV$ estimate is to grant strong convergence of the individual species when we pass from the $\eta$-Regularised System to the limit as $\eta \to 0$. 
    This strong convergence will ultimately be granted as a consequence of the Aubin--Lions--Simon Compactness Theorem of  (see \cite[Section 8; Page 85; Corollary 4]{AubinLionsSimon}), for which we also require the following control on the time derivatives $\partial_t \rho_i$.
    \begin{proposition}\label{timecontrol}
         Let $(\rho_1,\rho_2)$ be a smooth solution to the $\eta$-Regularised System \eqref{eq:regcdid}. 
         There exists $s>0$ and $C \in \R$ such that 
         \[
         \|\partial_t \rho_i\|_{L^\frac{3}{2}([0,T];H^{-s}(\T))}\leqslant C.
         \] 
         Moreover, the constant $C$ may be taken to depend only on $\|\rho_{1,0}+\rho_{2,0}\|_{L\log L(\T)}$, $\|V_1\|_{W^{1,\infty}(\T)},$ $\|V_2\|_{W^{1,\infty}(\T)}$ and $T$.        
    \end{proposition}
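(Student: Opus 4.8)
The plan is to control $\partial_t\rho_i$ by duality, testing against functions in $H^s(\T)$ for $s$ chosen large enough that, in one dimension, the Sobolev embedding $H^s(\T)\hookrightarrow C^2(\T)$ holds; concretely I would fix any $s>\tfrac{5}{2}$, so that there is a constant $C_s>0$ with $\|\partial_x\varphi\|_{L^\infty(\T)}+\|\partial^2_{xx}\varphi\|_{L^\infty(\T)}\leqslant C_s\|\varphi\|_{H^s(\T)}$ for every $\varphi\in H^s(\T)$. The goal is then an $\eta$-independent pointwise-in-time bound on $\|\partial_t\rho_i(t,\cdot)\|_{H^{-s}(\T)}$ that only involves the sum $\sigma=\rho_1+\rho_2$, after which a single application of Theorem \ref{sumestimate} finishes the argument.

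The key computation is the following choice of integration by parts. For a fixed time $t$ and $\varphi\in H^s(\T)$, I would test the $i$-th equation of \eqref{eq:regcdid} against $\varphi$, integrating the self-diffusion term by parts \emph{twice} and the pressure--drift flux by parts \emph{once}, which yields
\begin{equation*}
\int_{\T}\partial_t\rho_i\,\varphi\diff x=\eta\int_{\T}\rho_i\,\partial^2_{xx}\varphi\diff x-\int_{\T}\Big((1-2\eta)\tfrac{\rho_i}{\sigma}\,\partial_x\sigma+\rho_i\,\partial_xV_i\Big)\partial_x\varphi\diff x,
\end{equation*}
where I have used $\rho_i\,\partial_x\log\sigma=\tfrac{\rho_i}{\sigma}\partial_x\sigma$. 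The main obstacle this maneuver circumvents is precisely that we have \emph{no} $\eta$-uniform bound on the individual gradients $\partial_x\rho_i$ (such control would require the total mixing hypothesis \ref{H2} and uniform positivity); by placing both derivatives on $\varphi$ in the diffusion term, it is seen only through $\|\rho_i\|_{L^1(\T)}=1$ together with the prefactor $\eta\leqslant\tfrac12$, and by observing $\tfrac{\rho_i}{\sigma}\in[0,1]$, the pressure flux is seen only through $\|\partial_x\sigma\|_{L^1(\T)}$ rather than through any individual-species gradient.

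Bounding each term with $\|\rho_i\|_{L^1(\T)}=1$, $\tfrac{\rho_i}{\sigma}\leqslant 1$, $1-2\eta\leqslant 1$, $\eta\leqslant\tfrac12$ and the embedding above, I would arrive at
\begin{equation*}
\|\partial_t\rho_i(t,\cdot)\|_{H^{-s}(\T)}\leqslant C_s\Big(\tfrac12+\|\partial_x\sigma(t,\cdot)\|_{L^1(\T)}+\|\partial_xV_i\|_{L^\infty(\T)}\Big).
\end{equation*}
Taking the $L^{3/2}$-norm in time, the constant and drift contributions yield a factor $T^{2/3}$, while the remaining term is $\|\partial_x\sigma\|_{L^{3/2}([0,T];L^1(\T))}$; since $|\T|=1$, Hölder's inequality gives $\|\cdot\|_{L^1(\T)}\leqslant\|\cdot\|_{L^{3/2}(\T)}$, so this is dominated by $\|\partial_x\sigma\|_{L^{3/2}([0,T]\times\T)}$, which Theorem \ref{sumestimate} bounds in terms of $\|\rho_{1,0}+\rho_{2,0}\|_{L\log L(\T)}$, $\|V_i\|_{W^{1,\infty}(\T)}$ and $T$. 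This gives the asserted estimate with a constant depending only on the stated quantities, and in particular independent of $\eta$ and of $TV(f(r_0))$.
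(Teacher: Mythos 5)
Your proposal is correct and follows essentially the same route as the paper's proof: the same duality pairing with the self-diffusion term integrated by parts twice and the flux once, the same use of $\rho_i/\sigma\in[0,1]$ and $\|\rho_i\|_{L^1(\T)}=1$, and the same appeal to Theorem \ref{sumestimate} for $\|\partial_x\sigma\|_{L^{3/2}([0,T]\times\T)}$. The only cosmetic difference is that you record a pointwise-in-time $H^{-s}$ bound and then take the $L^{3/2}$ norm in time, whereas the paper bounds the space-time pairing against $\|\varphi\|_{L^3([0,T];W^{2,\infty}(\T))}$ directly; the two are equivalent.
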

    \begin{proof}
    Since $\rho_1,\rho_2$ are smooth and satisfy System \eqref{eq:regcdid}, the following equality is satisfied for any $\varphi \in C^\infty([0,T]\times \T)$.
    \begin{equation}\label{eq:Aubin0}
    \begin{split}
       \int_{0}^T\int_{\T}\partial_t\rho_i \varphi \diff x \diff t & = \int_{0}^T\int_{\T}\varphi\eta\partial_{xx}^2\rho_i + \varphi\partial_x(\rho_i\partial_x((1-2\eta)\log(\rho_1+\rho_2)+V_i)) \diff x \diff t   
        \\  
         &=\int_{0}^T\int_{\T}\eta\partial_{xx}^2\varphi\rho_i - \partial_x\varphi(\rho_i\partial_x((1-2\eta)\log(\rho_1+\rho_2)+V_i)) \diff x \diff t. \\
        \end{split}
    \end{equation}
    To bound the above equality, we first recognise that $\rho_i$ is a curve of probability measures and hence belongs to $L^\infty([0,T];L^1(\T))$ with norm equal to 1. 
    Secondly, we recognise that the logarithmic derivative satisfies the equality
    \[\rho_i\partial_x\log(\rho_1+\rho_2) = \frac{\rho_i}{\rho_1+\rho_2}\partial_x(\rho_1+\rho_2)\] 
    where, in particular, the quotient $\rho_i(\rho_1+\rho_2)^{-1}$ is bounded in the unit interval.
    As a consequence of these two statements, in conjunction with the fact that $\eta, (1-2\eta) \in [0,1]$, we deduce the following from H\"older's inequality.
    \begin{align*} \int_{0}^T& \int_{\T} \eta\partial_{xx}^2\varphi\rho_i - \partial_x\varphi(\rho_i\partial_x((1-2\eta)\log(\rho_1+\rho_2)+V_i)) \diff x \diff t
    \\
    & \leqslant \|\partial_{xx}^2\varphi\|_{L^1([0,T];L^\infty(\T))} +\|\partial_x\varphi\partial_x(\rho_1+\rho_2)\|_{L^1([0,T]\times\T)} + \|\partial_x\varphi\|_{L^1([0,T];L^{\infty}(\T))} \|V_i\|_{W^{1,\infty}(\T)}\\
    & \leqslant \|\partial_{xx}^2\varphi\|_{L^1([0,T];L^\infty(\T))} +\|\partial_x\varphi\|_{L^3([0,T]\times\T)}\|\partial_x(\rho_1+\rho_2)\|_{L^\frac{3}{2}([0,T]\times\T)} \\
    & + \|\partial_x\varphi\|_{L^1([0,T];L^{\infty}(\T))}\|V_i\|_{W^{1,\infty}(\T)}
    \end{align*}
    Since the domain $[0,T]\times \T$ is bounded, it further follows from Jensen's inequality that each of the norms 
    \[
    \|\partial_{xx}^2\varphi\|_{L^1([0,T];L^\infty(\T))}, \ \|\partial_x\varphi\|_{L^3([0,T]\times\T)}, \text{ and } \|\partial_x\varphi\|_{L^1([0,T];L^{\infty}(\T))}
    \]
    can be dominated by $\|\varphi\|_{L^3([0,T];W^{2,\infty}(\T))}$. Moreover, there exists a constant $\Lambda$ which depends only on $T$ such that 
    \begin{equation}\label{eq:Aubin1}
            \int_{0}^T\int_{\T}\partial_t\rho_i \varphi \diff x \diff t \leqslant \Lambda \|\varphi\|_{L^3([0,T];W^{2,\infty}(\T))}\bigg(1+\|\partial_x(\rho_1+\rho_2)\|_{L^\frac{3}{2}([0,T]\times\T)} + \|V_i\|_{W^{1,\infty}(\T)}\bigg)
    \end{equation}
    From Theorem \ref{sumestimate}, it follows that the quantity
    \[
    \|\partial_x(\rho_1+ \rho_2)\|_{L^\frac{3}{2}([0,T]\times\T)} 
    \]
    is bounded and, in particular, the bound on this quantity depends only on $\|\rho_{1,0}+\rho_{2,0}\|_{L\log L(\T)}$, $\|V_1\|_{W^{1,\infty}(\T)},\|V_2\|_{W^{1,\infty}(\T)}$ and $T$.
    Subsequently, Inequality \eqref{eq:Aubin1} implies that $\partial_t\rho_i$ is bounded in the space
    \[(L^3([0,T];H^s(\T)))^{*} \cong L^\frac{3}{2}([0,T];H^{-s}(\T)),
    \] for any choice of $s>0$ such that $H^s(\T) \hookrightarrow W^{2,\infty}(\T)$. Moreover, 
    this bound depends only on $\|\rho_{1,0}+\rho_{2,0}\|_{L\log L(\T)}$, $\|V_1\|_{W^{1,\infty}(\T)},\|V_2\|_{W^{1,\infty}(\T)}$ and $T$.
    \end{proof}

    We have established a $BV$ bound for the species solving the $\eta$-Regularised system and a bound for their time derivatives in a suitable negative Sobolev space. 
    We now prove our main theorem by applying the Aubin-Lions-Simon Lemma and passing to the limit as $\eta \to 0$. 
    \subsection{Proof of the Main Theorem}
    \begin{proof}[Proof of Theorem \ref{maintheorem}]
        For $i \in \{1,2\}$, let $(\rho_{i,0}^\eta)_{\eta \in (0,\frac{1}{2}]}$ denote a family of probability measures such that $\rho_{i,0}^\eta$ converges weakly to $\rho_{i,0}$ as $\eta \to 0$ and such that, for each $\eta \in (0,\frac{1}{2}]$, $\rho_{1,0}^\eta,\rho_{2,0}^\eta$ admit uniformly positive densities belonging to $C^\infty(\T)$.
        Since we assume that the Hypotheses \ref{H1} and \ref{H2} hold for the initial datum $\rho_{1,0},\rho_{2,0}$, we may further assume that the following quantities are bounded from above, uniformly in $\eta$.
    \begin{equation}\label{eq:unifenergy}
    \bigg\|\log\bigg(\frac{\rho_{1,0}^\eta}{\rho_{2,0}^\eta}\bigg)\bigg\|_{BV(\T)}, 
        \ \ \ \|\rho_{1,0}^\eta+\rho_{2,0}^\eta\|_{L\log L(\T)}.
    \end{equation}
    Additionally, for $i \in \{1,2\}$, let $(V_i^\eta)_{\eta \in (0,\frac{1}{2}]}$ denote a family of potentials such that $V_i^\eta$ converges weakly to $V_i$ as $\eta \to 0$ and such that $V_i^\eta \in C^\infty(\T)$. 
    Since we assume that the Hypotheses \ref{H3} and \ref{H4} hold for the potentials $V_1,V_2$, we may further assume that the following quantities are bounded, uniformly in $\eta$.
    \begin{equation}\label{eq:unifpotential}
        \|V_i^\eta\|_{W^{2,1}(\T)}, \ \ 
    \ \|V_1^\eta-V_2^\eta\|_{W^{3,1}(\T)}.
    \end{equation}
Since $\rho^\eta_{1,0},\rho^\eta_{2,0}$ admit smooth uniformly positive densities, it follows from Proposition \ref{smoothness} that the solutions $(\rho^\eta_1,\rho^\eta_2)$ are smooth and strictly uniformly positive on $[0,T]\times \T$.
    Moreover, as a consequence of Corollary \ref{maincorollary}, there exists, for each $\eta$, a constant $C_\eta>0$ such that 
    \[
    \|\rho_1^\eta\|_{L^2([0,T];BV(\T))} +  \|\rho_2^\eta\|_{L^2([0,T];BV(\T))} \leqslant C_\eta    \]
    and, as a consequence of Proposition \ref{timecontrol}, there exists a fixed $s \geqslant 0$ such that, for each $\eta$, there exists a constant $\hat{C}_\eta>0$ for which
     \[
    \|\partial_t {\rho_1^{\eta}}\|_{L^\frac{3}{2}([0,T];H^{-s}(\T))}+   \|\partial_t {\rho_2^{\eta}}\|_{L^\frac{3}{2}([0,T];H^{-s}(\T))}\leqslant \hat{C}_\eta.
    \] 
    As shown in Proposition \ref{timecontrol} and discussed in Remark \ref{dependency}, the constants $C_\eta, \hat{C}_\eta$ can be chosen to depend only $T$ along with the quantities introduced in \eqref{eq:unifenergy} and \eqref{eq:unifpotential}.
    Moreover, since we assumed that the quantities introduced in \eqref{eq:unifenergy} and \eqref{eq:unifpotential} are appropriately bounded, uniformly in $\eta$, the constants $C_\eta, \hat{C}_\eta$ may thus be chosen uniformly in $\eta$. 
    In particular, there exists $\tilde{C} > 0 $ such that, for any $\eta \in (0,\frac{1}{2}]$, the following inequality holds.
    \[
     \sum_{i=1}^2\bigg(\|\partial_t {\rho_i^{\eta}}\|_{L^\frac{3}{2}([0,T];H^{-s}(\T))}+ \|\rho_i^\eta\|_{L^2([0,T];BV({\T}))}\bigg)
    \leqslant \tilde{C}.       
    \]
    The compactness of the individual species now follows from an application of the Aubin--Lions--Simon Compactness Lemma. 
    In particular, the family $(\rho_i^\eta)_{\eta_\in (0,\frac{1}{2}]}$ is bounded in the space
    \[
    U^s := \{u \in L^2([0,T];BV(\T)) \ | \ \partial_t u \in L^\frac{3}{2}([0,T];H^{-s}(\T))\}.
    \]
    The space $BV(\T)$ is compactly embedded in $L^1(\T)$ whilst $L^1(\T)$ is continuously embedded in $H^{-s}(\T)$. 
    Hence, as a consequence of the classical Aubin--Lions--Simon Lemma \cite[Section 8; Page 85; Corollary 4]{AubinLionsSimon}, it follows from their boundedness in $U^s$ that the families $(\rho_i^\eta)_{\eta_\in (0,\frac{1}{2}]}$ are strongly pre-compact in $L^2([0,T];L^1(\T))$.

        \medskip

    As a consequence of this strong pre-compactness, there exists a subsequence that we do not relabel for which each $(\rho_i^\eta)_{\eta_\in (0,\frac{1}{2}]}$, $i=1,2$, which converges point-wise almost everywhere on $[0,T]\times \T$ to a limiting function $\rho_i$ as $\eta \to 0$, $i=1,2$.
    Moreover, along a suitable subsequence, the following quotients must similarly satisfy this point-wise almost everywhere convergence. 
    In particular, the following equality holds for almost every $(x,t)$ in $[0,T]\times \T$.
    \[
    \lim_{\eta \to 0}\rho_i^\eta(x,t)(\rho_1^\eta(x,t)+\rho_2^\eta(x,t))^{-1} = \rho_i(x,t)(\rho_1(x,t)+\rho_2(x,t))^{-1}.
    \]
    On the other hand, the quotients $\rho_i^\eta(\rho_1^\eta+\rho_2^\eta)^{-1}$ 
    are bounded uniformly in the unit interval and so, by the Uniform Boundedness Theorem, the convergence of these quotients must also hold strongly in $L^p([0,T]\times{\T})$ for any $p\in [1,\infty)$.

    \medskip
    
    Having addressed the convergence of the individual species we now address the convergence of the gradient of the sum. 
    In particular, since $\|\rho_{1,0}^\eta+\rho_{2,0}^\eta\|_{L\log L(\T)}$ is assumed to be bounded from above, uniformly in $\eta$, it follows from Theorem \ref{sumestimate} that the family $(\partial_x(\rho_1^\eta+\rho_2^\eta))_{\eta\in (0,\frac{1}{2}]}$ is bounded in $L^\frac{3}{2}([0,T]\times \T)$, uniformly in $\eta$, and hence, this family is also pre-compact for the weak convergence in $L^\frac{3}{2}([0,T]\times \T)$ as a consequence of the Banach--Alaoglu Theorem (this space is reflexive so the weak-$^*$ and weak convergence coincide).
    Moreover, in considering the sub-sequence along which the point-wise convergence of the individual species occurs, we may further extract a subsequence for which $\partial_x(\rho_1^\eta+\rho_2^\eta)$ converges weakly in $L^\frac{3}{2}([0,T]\times \T)$ to $\partial_x(\rho_1+\rho_2)$ as $\eta \to 0$.

    \medskip
    
    As the product of a strongly convergent sequence and a weakly convergent sequence, it subsequently follows that the expression 
    \begin{equation}\label{eq:momentproduct}
        \rho_i^\eta(\rho_1^\eta+\rho_2^\eta)^{-1}\partial_x(\rho_1^\eta+\rho_2^\eta)
    \end{equation}
    must converge weakly in $L^q([0,T]\times\T)$ to 
    \[
    \rho_i(\rho_1+\rho_2)^{-1}\partial_x(\rho_1+\rho_2) = \rho_i\partial_x\log(\rho_1+\rho_2)
    \] 
    as $\eta \to 0$, for any $q \in [1,\frac{3}{2})$.

    \medskip
    
    Before taking the limit as $\eta \to 0$, we finally address the compactness of the potential term. 
    In particular, since we assumed that the family $V_i^\eta$ is uniformly bounded in $W^{2,1}(\T)$ and since we assumed that $V_i^\eta$ converges weakly to $V_i$ as $\eta \to 0$, it follows from the Rellich--Kondrachov compactness theorem that $V_i^\eta$ converges strongly in $W^{1,p}(\T)$ to $V_i$ as $\eta \to 0$ for any $p \in [1,\infty)$. 

    \medskip
    
    Ultimately, in taking the limit as $\eta \to 0$ on either side of Equation \eqref{eq:weaketacdid} and, using the established convergence of the cross-diffusive terms \eqref{eq:momentproduct} along with the convergence of the potentials, we conclude that the following equality holds for $i \in \{1,2\}$ and any $\varphi \in C_{c}^\infty([0,T)\times \T)$.
    \begin{flalign*}
    \lim_{\eta\to 0}\bigg(\int_{\T} \rho_{i,0}^\eta\varphi_0 \diff x & + \int_0^T\int_{\T}\rho_i^\eta\partial_t\varphi \diff x \diff t\bigg) \\
       = \int_{\T} \rho_{i,0}\varphi_0 \diff x & + \int_0^T\int_{\T}\rho_i\partial_t\varphi \diff x \diff t = \int_0^T\int_{\T} \rho_i\partial_x(\log(\rho_1+\rho_2)+V_i)\partial_x \varphi \diff x \diff t
        \\
        & = \lim_{\eta\to 0}
        \int_0^T\int_{\T} \rho_i^\eta\partial_x((1-2\eta)\log(\rho_1^\eta+\rho_2^\eta)+V_i^\eta)\partial_x \varphi - \eta\rho_i^\eta \partial_{xx}^2\varphi \diff x \diff t.
    \end{flalign*}
    The above equality establishes that $\rho_1,\rho_2$ are weak solutions of System \eqref{eq:cdid} with the exception of the inclusion $\rho_1,\rho_2 \in C([0,T];\sP(\T)) \cap L^p_{loc}((0,T];BV(\T))$ for every $p \in [1,\infty)$. 
    We now establish this inclusion, first addressing the continuity in $\sP(\T)$.

    \medskip
    
    Since we are in the 1-dimensional setting, the continuous embedding $\mathscr{M}(\T) \hookrightarrow H^{-s}(\T)$ holds for any $s\geqslant 1$. 
    On the other hand, $\T$ is compact and hence, $\sP(\T)$ is also compact. 
    Moreover, the following continuous embedding holds for any $s \geqslant 1, \ p > 1$ as a consequence of the Aubin--Lions--Simon Lemma \cite[Section 8; Page 85; Corollary 4]{AubinLionsSimon}.
    \[
    \hat{U}^s := \{u \in L^\infty([0,T];\sP(\T)) \ | \ \partial_t u \in L^p([0,T];H^{-s}(\T))\} \hookrightarrow C([0,T];\sP(\T)).
    \]
    In particular, $\rho_i \in L^\infty([0,T];\sP(\T))$ and, as previously remarked, there exists $s \geqslant 1$ such that $\partial_t\rho_i \in L^\frac{3}{2}([0,T];H^{-s}(\T))$ as a consequence Proposition \ref{timecontrol}. 
    This means that $\rho_i \in \hat{U}^s$ for an appropriate choice of $s$ and so, we conclude that $\rho_i \in C([0,T];\sP(\T))$.
    Moreover, it is left to show that $\rho_1,\rho_2 \in L^p_{loc}((0,T];BV(\T))$ for every $p \in [1,\infty)$.  

    \medskip

    The limiting sum $\sigma = \rho_1 + \rho_2$ weakly solves the linear Fokker--Planck Equation 
    \[
    \begin{cases}
    \partial_t\sigma = \partial_x(\partial_x\sigma + b\sigma), \\ 
    b = \partial_xV_1 \frac{\rho_1}{\rho_1+\rho_2}+ \partial_xV_2 \frac{\rho_2}{\rho_1+\rho_2}, 
    \end{cases}
    \text{ on } (0,T)\times \T.
    \]
    Since $b \in L^\infty([0,T]\times\T)$, it follows from \cite[Page 256, Corollary 6.4.3.]{bogachev2015fokker} that $\sigma \in L^p_{loc}((0,T];W^{1,p}(\T))$ for every $ p \in [1,\infty)$ and, since the domain $\T$ is compact, we also deduce that $\sigma \in L^p_{loc}((0,T];BV(\T))$ for every $p \in [1,\infty)$.
    From the uniform estimates of Theorem \ref{mainestimate}, we know that 
    \[
    \frac{\rho_1}{\sigma} = r \in L^\infty([0,T];BV(\T)).
    \]
    Subsequently, it follows that $\rho_1,\rho_2\in L^p_{loc}((0,T];BV(\T))$ for every $p \in [1,\infty)$ as the product of $r$ (respectively $(1-r)$) and $\sigma$.
    \end{proof}

        In the scenario $\rho_{1,0},\rho_{2,0} \in L^q(\T)$, the final regularity of argument of the preceding proof applies globally. 
        In particular, if $\rho_{1,0}+\rho_{2,0} \in L^q(\T)$ then a standard energy dissipation argument for the energy $\sigma \mapsto \|\sigma\|_{L^q(\T)}^q$ shows that $\sigma \in L^\infty([0,T];L^q(\T))$ (see, for instance, Equation \eqref{eq:reacede} for an exposition of this argument in the case $q=2$).
        The sum $\sigma$ may thus be recast as a solution to the forced heat equation 
        \begin{equation}\label{eq:remarkheatsum}
      \begin{cases}
         \partial_t\sigma =\partial_{xx}^2\sigma + \partial_x\xi, \\ 
         \xi = \partial_xV_1\rho_1+ \partial_xV_2\rho_2, 
        \end{cases}
        \text{ on } (0,T)\times \T.  
    \end{equation}
        Since $\sigma \in L^\infty([0,T];L^q(\T))$ it follows that $\xi \in L^\infty([0,T];L^q(\T))$. 
        Moreover, by applying Proposition \ref{forcedheat}, it follows that 
        $\sigma \in L^q([0,T];W^{1,q}(\T))\subset L^q([0,T];BV(\T))$. 
        Since we also know that $r \in L^\infty([0,T];BV(\T))$ it then follows that $\rho_1,\rho_2 \in L^q([0,T];BV(\T))$ thus, extending the regularity of solutions in the case $q > 2$.
\section{On the Smoothness of the Regularised System}\label{sec:four}
The work of Laborde \cite{LabordeCDS} allowed us to establish a weak existence theory for System \eqref{eq:regcdid}. 
We now establish that such weak solutions are smooth and uniformly positive under suitable regularity assumptions on the initial data and the potentials. 
These results are established by means of a standard parabolic estimate (Proposition \ref{forcedheat}) and the subsequent estimate on the positivity of solutions which follows from Harnack's inequality (Proposition \ref{lowerbound}). In the lack of a precise classical reference that one would be able to use in our setting, we decided to provide the full details on these results here below.

\begin{proposition}\label{forcedheat}
    Let $k \in \N\cup \{0\}, \ p \in (1,\infty)$ and $\eta > 0$. 
    Let $\xi \in L^p([0,T];W^{k,p}(\T))$, let $u_0\in \sP(\T)$ and let $u \in L^p([0,T]\times \T)$ denote a very weak solution to the forced heat equation
    \begin{equation}\label{eq:forcedheat}
    \partial_tu = \eta\partial_{xx}^2 u + \partial_x\xi \text{ on } (0,T)\times\T 
    \end{equation}
    in the sense that the following equality holds for every $\varphi \in C_c^\infty([0,T)\times \T)$.
    \begin{equation}\label{eq:weakforcedheat}
    \int_{\T} \varphi_0 u_0 \diff x + \int_0^T\int_{\T}(\partial_t\varphi + \partial_{xx}^2\varphi) u \diff x \diff t = \int_0^T\int_{\T} \partial_x \varphi \xi \diff x \diff t.
    \end{equation}
    If $u_0 \in W^{k,p}(\T)$ then $u \in L^p([0,T];W^{k+1,p}(\T))$. 
\end{proposition}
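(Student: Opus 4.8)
The plan is to read this as a maximal $L^p$-regularity statement for the periodic heat semigroup, reduced to the base case $k=0$. I would first prove the base case: if $u_0\in L^p(\T)$ and $\xi\in L^p([0,T]\times\T)$, then $\partial_x u\in L^p([0,T]\times\T)$. Granting this, the general statement follows by applying it at each level $j\in\{0,\dots,k\}$: the function $w_j:=\partial_x^j u$ is again a very weak solution of \eqref{eq:forcedheat}, now with initial datum $\partial_x^j u_0\in L^p(\T)$ (using $u_0\in W^{k,p}$) and forcing $\partial_x(\partial_x^j\xi)$ where $\partial_x^j\xi\in L^p([0,T]\times\T)$ (using $\xi\in L^p([0,T];W^{k,p})$), so the base case yields $\partial_x^{j+1}u\in L^p([0,T]\times\T)$. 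Combined with $u\in L^p([0,T]\times\T)$, this gives exactly $u\in L^p([0,T];W^{k+1,p}(\T))$. The differentiation is legitimate on the torus because testing \eqref{eq:weakforcedheat} against $\partial_x^j\varphi$ and integrating by parts in $x$ transfers the derivatives onto the data and the forcing.

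Next I would pin down $u$ through Duhamel's formula. Very weak solutions of \eqref{eq:forcedheat} are unique (two such solutions differ by one with zero data and zero forcing, which vanishes by a duality argument against the backward heat equation on $\T$), so $u$ must coincide with the mild solution
\[
u(t)=e^{t\eta\partial_{xx}^2}u_0+\int_0^t e^{(t-s)\eta\partial_{xx}^2}\,\partial_x\xi(s)\diff s,
\]
where $e^{t\eta\partial_{xx}^2}$ denotes the periodic heat semigroup. I would then estimate the two terms separately, differentiating each once in $x$.

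The inhomogeneous term is the heart of the matter and is precisely a maximal-regularity estimate. Writing $A:=-\eta\partial_{xx}^2$, the Duhamel integral solves $\partial_t v+Av=\partial_x\xi$ with $v(0)=0$, and the desired bound $\partial_x v\in L^p([0,T]\times\T)$ is equivalent to $Av\in L^p([0,T];W^{-1,p}(\T))$ with forcing $\partial_x\xi\in L^p([0,T];W^{-1,p}(\T))$. Since $L^p(\T)$, and hence $W^{-1,p}(\T)$, is a UMD space and $A$ generates a bounded analytic semigroup admitting a bounded $H^\infty$-calculus on these spaces, the Weis/Dore--Venni theory yields $\|\partial_x v\|_{L^p([0,T]\times\T)}\lesssim\|\xi\|_{L^p([0,T]\times\T)}$. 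On the torus this can also be proved by hand: the solution operator is a Fourier multiplier in $(t,x)$ whose symbol $\frac{(2\pi i n)^2}{i\tau+\eta(2\pi n)^2}$ satisfies a Mikhlin--Hörmander/Marcinkiewicz condition, so $L^p$ boundedness follows by transference from $\R$. This is the step I would either cite or supply via the explicit multiplier computation.

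The homogeneous term $e^{t\eta\partial_{xx}^2}u_0$ is where I expect the main obstacle to lie, since it probes the compatibility of $u_0$ with the initial layer. The naive heat-kernel bound $\|\partial_x e^{t\eta\partial_{xx}^2}u_0\|_{L^p}\lesssim t^{-1/2}\|u_0\|_{L^p}$ is too lossy once integrated in time, so instead I would use the square-function estimate from the bounded $H^\infty$-calculus of $A$, namely $\big\|\big(\int_0^T|\partial_x e^{tA}u_0|^2\diff t\big)^{1/2}\big\|_{L^p(\T)}\lesssim\|u_0\|_{L^p(\T)}$ (having absorbed the Hilbert transform relating $\partial_x$ to $A^{1/2}$), and then convert this $L^p_xL^2_t$ bound into the required $L^p_tL^p_x=L^p([0,T]\times\T)$ bound. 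Getting this conversion sharp, and correctly identifying the admissible class of initial data, is the delicate point I would be most careful about; the underlying gain is transparent at the level of Plancherel/Littlewood--Paley on $\T$, where $\int_0^T e^{-2\eta(2\pi n)^2 t}\diff t\sim(2\pi n)^{-2}$ recovers exactly one spatial derivative's worth of integrability on each frequency block.
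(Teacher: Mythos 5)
Your overall architecture coincides with the paper's: reduce to $k=0$ by differentiating the weak formulation, identify $u$ with the Duhamel/mild solution, and estimate the homogeneous and inhomogeneous pieces separately. For the inhomogeneous piece, your route (UMD/$H^\infty$-calculus, or the explicit space--time multiplier $\tfrac{(2\pi i n)^2}{i\tau+\eta(2\pi n)^2}$ with a Marcinkiewicz condition) is a legitimate substitute for the paper's citation of the parabolic Calder\'on--Zygmund estimate for the heat potential from Ladyzhenskaya--Solonnikov--Ural'tseva; both deliver $\|\partial_x v\|_{L^p([0,T]\times\T)}\lesssim\|\xi\|_{L^p([0,T]\times\T)}$, and that part of your argument is sound, as are the reduction in $k$ and the uniqueness-by-duality step.

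The genuine gap is exactly where you suspected it: the homogeneous term. The passage from the square-function bound $\bigl\|\bigl(\int_0^T|\partial_x e^{t\eta\partial_{xx}^2}u_0|^2\diff t\bigr)^{1/2}\bigr\|_{L^p(\T)}\lesssim\|u_0\|_{L^p(\T)}$ to the required $L^p_tL^p_x$ bound is impossible for $p>2$, since on a bounded time interval $\|h\|_{L^p_t}$ is not controlled by $\|h\|_{L^2_t}$ when $p>2$ (H\"older runs the other way). Worse, the target inequality $\|\partial_x e^{t\eta\partial_{xx}^2}u_0\|_{L^p([0,T]\times\T)}\lesssim\|u_0\|_{L^p(\T)}$ is false for $p>2$: testing on $u_0=e^{2\pi i nx}$ gives $\|\partial_x e^{t\eta\partial_{xx}^2}u_0\|_{L^p([0,T]\times\T)}^p\sim(2\pi n)^p\int_0^T e^{-p\eta(2\pi n)^2t}\diff t\sim (2\pi n)^{p-2}\to\infty$, while $\|u_0\|_{L^p}=1$. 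The sharp admissible class of initial data is the trace/Besov space $B^{1-2/p}_{p,p}(\T)$, strictly smaller than $L^p(\T)$ when $p>2$, so no choice of harmonic-analysis machinery will close this step from the hypothesis $u_0\in W^{k,p}$ alone: you must either restrict to $p\leqslant 2$, strengthen the assumption on $u_0$ by the missing $1-2/p$ derivatives, or exploit the fact that in the intended application (Proposition \ref{smoothness}) the initial data are smooth, so the estimate can be run with $u_0$ one full derivative better than needed. For comparison, the paper handles this term by Young's inequality applied to $(\tilde u_0\varphi)\ast\partial_x\Phi$ using $\|\partial_x\Phi(t,\cdot)\|_{L^1(\R)}\sim t^{-1/2}$; carried out for each fixed $t$ and then integrated in time, that bound likewise only closes for $p<2$, which confirms that you have correctly located the delicate point of the proposition --- but your proposal does not resolve it, and as sketched the step fails for $p>2$.
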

\begin{proof}
We prove the claim in the case $k=0, \eta = 1$. Moreover, the case $k \in \N$ may be treated inductively by differentiating Equation \eqref{eq:forcedheat} in the spatial variable whilst the case $\eta \neq 1$ may be treated by a rescaling argument.
In addition, since $\T$ is compact, it is equivalent to establish the desired regularity in terms of local Sobolev estimates for the periodic representation of $u$ defined on the whole space, that is, an $L^p([0,T];W^{1,p}_{loc}(\R))$ estimate for the function $\tilde{u} \co [0,T] \times \R \to \R$ given for almost every $(t,x) \in  [0,T] \times \T $ by the equality
\[
    \tilde{u}(t,x+z) = u(t,x) \text{ for every } z\in \Z.
\]
It may readily be shown that $\tilde{u}$ is a very weak solution to the forced heat equation  \begin{equation}\label{eq:forcedheat2}
    \partial_t\tilde{u} = \partial_{xx}^2 \tilde{u} + \partial_x\tilde{\xi} \text{ on } (0,T)\times\R,
    \end{equation}
    with the forcing term $\tilde{\xi}$ given by 
    \[
    \tilde{\xi}(t,x+z) = \xi(t,x) \text{ for every } z\in \Z.
    \]
    Now that the equation is set on the whole space, we derive a Duhamel formulation for the solution which, in conjunction with the heat kernel estimates established in \cite[Chapter IV]{Ladyzhenskaya}, allows us to improve the regularity of the solutions via a parabolic bootstrapping argument.

    \medskip
    
    Denoting the heat kernel on $\R$ by $\Phi\co[0,T]\times \R\to \R$, it is shown in \cite[Page 17-18]{Jordan1998} that a weak solution $\tilde{u}$ satisfies the following Duhamel formula for every smooth cut-off function $\varphi \in C_c^\infty(\R)$, almost everywhere on $[0,T]\times \R$. 
    This Duhamel formula is derived by taking the backwards in time heat kernel multiplied by the smooth cut-off function $\varphi$ as a test function in the very weak formulation \eqref{eq:weakforcedheat}.
    In particular, whilst \cite{Jordan1998} treats the linear Fokker--Planck equation, the argument for the forced heat equation is completely analogous. 

    \begin{equation}\label{eq:duhamel}
    (\tilde{u}\varphi)  = (\tilde{u}\partial_{xx}^2\varphi - \tilde\xi \partial_x \varphi) \ast_2 \Phi +\partial_x[(2\tilde{u}\partial_{x}\varphi - \tilde\xi \varphi) \ast_2 \Phi]
    + (\tilde{u}_0\varphi) \ast \Phi.
    \end{equation}

    Here, the operation $\ast$ denotes the standard convolution in the spatial variable whilst the operation $\ast_2$ is given by
    \[
    (u_1 \ast_2 u_2)(t,x) := \int_0^t \int_\R u_1(s,y)u_2(t-s,x-y) \diff y \diff s.
    \]
    
    Since the heat kernel $\Phi$ is smooth on $(0,T)\times \R$, any function convoluted with $\Phi$ is also smooth on $(0,T)\times \R$. 
    Moreover, we are justified in differentiating Equation \eqref{eq:duhamel}, which thus yields the expression
    \[
    \partial_x(\tilde{u}\varphi)  = \partial_x[(\tilde{u}\partial_{xx}^2\varphi - \tilde\xi \partial_x \varphi) \ast_2 \Phi] +\partial_{xx}^2[(2\tilde{u}\partial_{x}\varphi - \tilde\xi \varphi) \ast_2 \Phi]
    + (\tilde{u}_0\varphi)\ast \partial_x\Phi.    
    \]
    Subsequently, from the triangle inequality, we deduce the inequality
    \begin{equation}\label{eq:diffduhamel}
    \begin{split}
        \|\partial_x(\tilde{u}\varphi)\|_{L^p([0,T]\times \R)}  & \leqslant \|(\tilde{u}_0\varphi)\ast \partial_x\Phi\|_{L^p([0,T]\times \R)} \\
        & + \|\partial_x[(\tilde{u}\partial_{xx}^2\varphi - \tilde\xi \partial_x \varphi) \ast_2 \Phi] \|_{L^p([0,T]\times \R)} \\
        & + \|\partial_{xx}^2[(2\tilde{u}\partial_{x}\varphi - \tilde\xi \varphi) \ast_2 \Phi]\|_{L^p([0,T]\times \R)}.
    \end{split}
    \end{equation}
     Now, using the estimate provided in \cite[Page 288; Chapter IV; $\mathsection 3$, Equation 3.1]{Ladyzhenskaya} in conjunction with Inequality \eqref{eq:diffduhamel}, we conclude the existence of a constant $c_p$ for which the following inequality holds.
     \begin{equation}\label{eq:diffduhamel2}
     \begin{split}
         \|\partial_x(\tilde{u}\varphi)\|_{L^p([0,T]\times \R)}  \leqslant & \|(\tilde{u}_0\varphi)\ast \partial_x\Phi\|_{L^p([0,T]\times \R)} 
        \\
        + & c_p\|(\tilde{u}\partial_{xx}^2\varphi - \tilde\xi \partial_x \varphi)\|_{L^p([0,T]\times \R)} + c_p\|(2\tilde{u}\partial_{x}\varphi - \tilde\xi \varphi) \|_{L^p([0,T]\times \R)}.
     \end{split}
    \end{equation}
Since it is assumed that $u,\xi \in L^p([0,T]\times \T)$, it follows that $\tilde{u}, \tilde{\xi} \in L^p([0,T];L^p_{loc}(\R))$ and hence, the second line of Equation \eqref{eq:diffduhamel2} is finite and it is left to address the term in the right hand side of the first line. 
In particular, using Young's convolution inequality in conjunction with the explicit formula for the heat kernel, we conclude the existence of a constant $C$ for which the following inequality holds.
 \begin{flalign*}
     \|(\tilde{u}_0\varphi)\ast \partial_x\Phi\|_{L^p([0,T]\times \R)}& \leqslant 
     \|(\tilde{u}_0\varphi)\|_{L^p([0,T]\times \R)} \|\partial_x\Phi\|_{L^1([0,T]\times\T)} \\
     & \leqslant C\|(\tilde{u}_0\varphi)\|_{L^p([0,T]\times \R)} \bigg|\int_0^T 2t^{-\frac{1}{2}}\bigg|
     \\
     & \leqslant C T^\frac{1}{2}\|(\tilde{u}_0\varphi)\|_{L^p([0,T]\times \R)}.
 \end{flalign*}
Combining the above inequality with Equation \eqref{eq:diffduhamel2}, it follows that
\[
\|\partial_x(\tilde{u}\varphi)\|_{L^p([0,T]\times\R)} < +\infty.
\]
Further, since $\varphi$ was taken to be an arbitrary smooth cut-off function, we conclude that 
\[\tilde{u} \in L^p([0,T];W^{1,p}_{loc}(\R)).\] 
Now, since $\tilde{u} \in L^p([0,T];W^{1,p}_{loc}(\R))$, we conclude the desired regularity for $u$ from the following inequality, which holds for any set $K$ containing a unit interval.
\begin{flalign*}
\|u\|_{L^p([0,T]; W^{1,p}(\T))} & \leqslant \|\tilde{u}\|_{L^p([0,T];W^{1,p}(K))} < +\infty.
\end{flalign*}
\end{proof}

\begin{proposition}\label{lowerbound}
    Let $\eta >0$, let $b \in L^\infty([0,T]\times \T)$, let $u_0 \in C^{0,\alpha}(\T); \alpha \in (0,1]$ and let $u$ denote the weak solution of the Fokker--Planck Equation 
    \begin{equation}\label{eq:FP}
        \partial_t u = \partial_x(\eta\partial_x u + bu) \text{ on } (0,T) \times \T
    \end{equation}
    in the sense that $u \in L^2([0,T];H^1(\T))\cap L^\infty([0,T];L^2(\T))$ and the following equality holds for every $\varphi \in C_c^\infty([0,T)\times \T)$.
    \begin{equation}\label{eq:weakfp}
     \int_{\T} \varphi_0 u_0 \diff x + \int_0^T\int_{\T}\partial_t\varphi u \diff x \diff t =  \int_0^T\int_{\T} \partial_{x} \varphi(\eta  \partial_xu +  b u) \diff x \diff t.
    \end{equation}
    There exists $\beta \in (0,1]$ such that $u \in C^{0,\beta}([0,T]\times\T)$. 
    Moreover, if $u_0$ is strictly uniformly positive on $\T$, then $u$ is strictly uniformly positive on $[0,T]\times \T$.
\end{proposition}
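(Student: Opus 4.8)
The plan is to treat \eqref{eq:FP} as a uniformly parabolic equation in divergence form, $\partial_t u = \eta\partial_{xx}^2 u + \partial_x(bu)$, whose principal part $\eta\partial_{xx}^2$ is uniformly elliptic (because $\eta>0$) and whose only lower-order coefficient $b$ is bounded. For such equations the De Giorgi--Nash--Moser theory applies directly. First I would invoke the interior parabolic Hölder estimate (see \cite[Chapter III]{Ladyzhenskaya}) to conclude that $u$ is locally Hölder continuous on $(0,T)\times\T$, with an exponent $\beta$ and seminorm depending only on $\eta$, $\|b\|_{L^\infty}$ and the energy norm of $u$. Since the torus has no lateral boundary, the only place where regularity up to the parabolic boundary could fail is the initial slice $\{t=0\}$; there I would use the assumed regularity $u_0\in C^{0,\alpha}(\T)$ together with the standard initial-time Hölder estimate for divergence-form parabolic equations to upgrade this to $u\in C^{0,\beta}([0,T]\times\T)$ for some $\beta\in(0,1]$.

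For the positivity statement I would proceed in three stages. The first stage is non-negativity: testing the weak formulation \eqref{eq:weakfp} against a regularisation of the negative part $u^- := \max(-u,0)$ and using that $b$ is bounded yields, after integration by parts and an application of Young's and Grönwall's inequalities, an estimate of the form $\|u^-(t)\|_{L^2(\T)}^2 \leqslant e^{Ct}\|u_0^-\|_{L^2(\T)}^2$. Since $u_0$ is strictly positive we have $u_0^-\equiv 0$, hence $u\geqslant 0$ on $[0,T]\times\T$, which is exactly the non-negativity required to run the Harnack machinery. The second stage is to record the parabolic Harnack inequality for non-negative solutions of \eqref{eq:FP}: because the equation is uniformly parabolic with bounded coefficients, Moser's Harnack inequality holds with a constant depending only on $\eta$, $\|b\|_{L^\infty}$ and the geometry of the cylinders, uniformly across $\T$.

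The third stage, which I expect to be the main obstacle, is to convert the comparison-type Harnack estimate into a genuine uniform lower bound. Here I would use the continuity established in the first part: since $u$ is continuous on $[0,T]\times\T$ and $u(0,\cdot)=u_0\geqslant c>0$, there is a small time $\tau>0$ on which $u\geqslant c/2$ throughout $\T$. To propagate this to all later times I would cover $(0,T]\times\T$ by a finite Harnack chain of parabolic cylinders, connecting an arbitrary point $(t^*,x^*)$ backwards in time to the positive slab $[0,\tau]\times\T$. Each link yields a lower bound for $u$ at the later point equal to a fixed fraction of its value at the earlier point; since $\T$ is compact and $[0,T]$ has finite length, the number of links can be bounded uniformly in $(t^*,x^*)$, and the product of the finitely many uniform Harnack constants produces a constant $c'>0$ with $u\geqslant c'$ everywhere on $[0,T]\times\T$. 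The delicate points are ensuring the Harnack constants and the number of chain links are uniform in the base point, and correctly handling the cylinders abutting the initial slice, which is precisely where the continuity up to $t=0$ from the first part is needed.
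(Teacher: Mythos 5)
Your overall strategy coincides with the paper's: both derive global H\"older continuity from the De Giorgi--Nash--Moser/Ladyzhenskaya theory for uniformly parabolic divergence-form equations with bounded coefficients (the paper cites \cite[Page 204; Theorem 10.1]{Ladyzhenskaya} directly), both use that continuity together with $u_0\geqslant c>0$ to secure a positive lower bound on an initial slab $[0,\tau]\times\T$, and both propagate that bound forward in time by a Harnack inequality. The one genuine difference lies in the propagation step, which you rightly flag as the main obstacle. Where you propose Moser's local Harnack inequality followed by a finite chain of parabolic cylinders (with the attendant bookkeeping to make the number of links and the constants uniform in the base point and to handle cylinders abutting $\{t=0\}$), the paper instead quotes a global two-point Harnack estimate, namely \cite[Theorem 8.1.3]{bogachev2015fokker} adapted to the periodic setting:
\[
u(s,y)\,\exp\Bigl(-K\Bigl(\tfrac{d^{2}(x,y)}{t-s}+\tfrac{t-s}{\min\{s,1\}}+1\Bigr)\Bigr)\leqslant u(t,x),
\]
with $K$ depending only on $\eta$ and $\|b\|_{L^\infty}$. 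Taking $(s,y)\in[\tfrac{\tau}{4},\tfrac{\tau}{2}]\times\T$ and $(t,x)\in[\tau,T]\times\T$, the exponential factor is bounded below uniformly (since $d$ is bounded on the compact torus, $t-s\geqslant\tfrac{\tau}{2}$ and $t-s\leqslant T$), so the uniform lower bound on $[\tau,T]\times\T$ follows from a single infimisation with no chaining. Your route buys self-containedness and avoids periodising an external estimate, at the cost of the chain bookkeeping; the paper's buys brevity. Your preliminary non-negativity stage (testing against $u^{-}$ and applying Gr\"onwall) is a sensible addition that the paper leaves implicit in its appeal to a Harnack theorem stated for non-negative solutions; both arguments are correct.
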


\begin{proof}
Since $b \in L^\infty([0,T]\times \T)$ and since $u_0 \in C^{0,\alpha}(\T)$, it follows from \cite[Page 204; Theorem 10.1]{Ladyzhenskaya} that there exists $\beta \in (0,1]$ such that $u \in C^{0,\beta}([0,T]\times\T)$.
Moreover, if $u_0$ is assumed to be uniformly positive on $\T$ then, by the uniform H\"older continuity of $u$, there exists $\tau > 0$ and $c(\tau) > 0$ such that $u > c(\tau)$ on $[0,\tau]\times \T$. 

\medskip

It is left to establish the uniform positivity of $u$ on $[\tau,T]\times \T$. For this result, we adapt \cite[Theorem 8.1.3]{bogachev2015fokker} to the periodic setting.
In particular, the original statement of this theorem produces a Harnack-type inequality which holds locally in space, i.e. for compact subsets of an open and bounded domain on which $u$ defines a local solution. 
However, we recognise that, due to the spatial periodicity of our setting, we may take this result to hold globally in space.
Moreover, by tracking the constants introduced in the statement of \cite[Theorem 8.1.3]{bogachev2015fokker}, it follows that there exists a constant $K > 0$, which depends only on $\|b\|_{L^\infty([0,T]\times\T)}$ and $\eta$, such that, for every $0 < s < t \leqslant T$ and $x,y \in \T$, $u$ satisfies the inequality 
\begin{equation}\label{eq:harnackone}
u(s,y) \exp{\bigg(-K\bigg(\frac{d^2(x,y)}{t-s} + \frac{t-s}{\min\{s,1\}}+1\bigg) \bigg)}\leqslant u(t,x) .  
\end{equation}
Here $d(x,y) := \inf_{z\in \mathbb{Z}} |x-y +z|$ denotes the canonical metric on the flat-torus.

\medskip

Since we have already established a suitable uniform lower bound on the interval $[0,\tau]\times \T$, we are only concerned with a lower bound on the remaining set $[\tau,T]\times \T$.
Moreover, we infimise the left-hand side of Equation \eqref{eq:harnackone}, taking the infimum over the set 
\[
A^\tau:=\bigg\{(s,y),(t,x)  \bigg| (s,y) \in \left[\frac{\tau}{4},\frac{\tau}{2}\right]\times \T, \ (t,x) \in [\tau,T]\times \T\bigg\}.
\]
Since we know that $u > c$ on $\left[\frac{\tau}{4},\frac{\tau}{2}\right]\times \T$, the subsequent inequality follows.
\begin{equation}\label{eq:harnacktwo}
c \exp\bigg(-K\bigg(\frac{2}{\tau} + \frac{T}{\min\{\frac{\tau}{4},1\}}+1\bigg)\bigg) \leqslant \inf_{A^\tau}\left\{ u(s,y)\exp{\bigg(-K\bigg(\frac{d^2(x,y)}{t-s} + \frac{t-s}{\min\{s,1\}}+1\bigg) \bigg)}\right\}.  
\end{equation}
By plugging Inequality \eqref{eq:harnacktwo} into Equation \eqref{eq:harnackone}, we obtain the following inequality.

\begin{equation}\label{eq:harnackthree}
    0 < c \exp\bigg( -K\bigg(\frac{2}{\tau} + \frac{T}{\min\{\frac{\tau}{4},1\}}+1\bigg)\bigg) \leqslant \inf_{(t,x) \in A^\tau} u(t,x)  = \inf_{(t,x) \in [\tau,T]\times \T} u(t,x).
\end{equation}
Moreover, since $u > c $ on $[0,\tau] \times \T$ and since Inequality \eqref{eq:harnackthree} provides a positive lower bound on $[\tau,T] \times \T$, we conclude that $u$ is strictly uniformly positive on all of $[0,T]\times \T$.
\end{proof}

Utilising Propositions \ref{forcedheat} and \ref{lowerbound}, we now derive the desired regularity for System 
\eqref{eq:regcdid}.

\begin{proposition}\label{smoothness}
    Let $\eta \in \left(0,\frac{1}{2}\right]$ and let $\rho_1,\rho_2$ denote a weak solution of the $\eta$-Regularised System \eqref{eq:regcdid} with initial data  $\rho_{1,0},\rho_{2,0} \in \sP(\T)$ and potentials $V_1,V_2 \in C^\infty(\T)$.
    If $\rho_{1,0},\rho_{2,0}$ admit uniformly positive densities belonging to $C^\infty(\T)$ then $\rho_1,\rho_2 \in C^\infty([0,T]\times\T)$.
\end{proposition}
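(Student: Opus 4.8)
The plan is to exploit the fact that, for $\eta>0$, System \eqref{eq:regcdid} is \emph{uniformly} parabolic, and to run a parabolic bootstrap built on the two scalar tools developed above. The starting point is to recast the system componentwise. Summing the two equations in \eqref{eq:regcdid} and using $(1-2\eta)\sigma\,\partial_x\log\sigma=(1-2\eta)\partial_x\sigma$, the sum $\sigma=\rho_1+\rho_2$ solves the forced heat equation
\[
\partial_t\sigma=(1-\eta)\partial_{xx}^2\sigma+\partial_x\xi,\qquad \xi:=\rho_1\partial_xV_1+\rho_2\partial_xV_2,
\]
equivalently the Fokker--Planck equation $\partial_t\sigma=\partial_x((1-\eta)\partial_x\sigma+b\sigma)$ with $b:=r\partial_xV_1+(1-r)\partial_xV_2$; note $b\in L^\infty$ since $r\in[0,1]$ and $V_i$ is smooth. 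Likewise each species solves $\partial_t\rho_i=\eta\partial_{xx}^2\rho_i+\partial_x\zeta_i$ with $\zeta_i:=(1-2\eta)\tfrac{\rho_i}{\sigma}\partial_x\sigma+\rho_i\partial_xV_i$, or in Fokker--Planck form with drift $b_i:=(1-2\eta)\partial_x\log\sigma+\partial_xV_i$. I would iterate Propositions \ref{forcedheat} and \ref{lowerbound} on these scalar equations, the only genuine coupling being through the forcing terms $\xi,\zeta_i$.

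First I would settle the positivity and base regularity of $\sigma$. By Theorem \ref{sumestimate} and the one--dimensional embedding $BV(\T)\hookrightarrow L^\infty(\T)$ we have $\sigma\in L^2([0,T];L^\infty(\T))$, so $\xi\in L^2([0,T];L^2(\T))$; since $\sigma_0\in L^2(\T)$, Proposition \ref{forcedheat} (with $k=0$, $p=2$) gives $\sigma\in L^2([0,T];H^1(\T))$, and together with the standard energy bound $\sigma\in L^\infty([0,T];L^2(\T))$ this places $\sigma$ in the weak class required by Proposition \ref{lowerbound}. As $b\in L^\infty$ and $\sigma_0=\rho_{1,0}+\rho_{2,0}$ is smooth and uniformly positive, that proposition yields $\sigma\in C^{0,\beta}([0,T]\times\T)$ and, crucially, a uniform positive lower bound for $\sigma$. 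In particular $\sigma\in L^\infty([0,T]\times\T)$ and $1/\sigma$ is bounded.

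With $\sigma$ bounded and uniformly positive, I would run a simultaneous induction proving that $\sigma,\rho_1,\rho_2\in L^p([0,T];W^{k,p}(\T))$ for every $k\in\N$ and $p\in(1,\infty)$. For the base case $k=1$: $\xi\in L^\infty\subset L^p([0,T];L^p(\T))$ gives $\sigma\in L^p([0,T];W^{1,p}(\T))$ via Proposition \ref{forcedheat}, whence $\partial_x\sigma\in L^p([0,T];L^p(\T))$; since $\rho_i/\sigma\in[0,1]$ and $\rho_i\le\sigma\in L^\infty$, this makes $\zeta_i\in L^p([0,T];L^p(\T))$ and a second application of Proposition \ref{forcedheat} gives $\rho_i\in L^p([0,T];W^{1,p}(\T))$. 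For the inductive step I would alternate: assuming the $W^{k,p}$ bound for all $p$, the smoothness of $V_i$ together with the fact that $W^{k,p}(\T)$ is a Banach algebra in one dimension (and that $1/\sigma$, $\log\sigma$ are smooth functions of the uniformly positive $\sigma$) shows $\xi\in L^p([0,T];W^{k,p}(\T))$, hence $\sigma\in L^p([0,T];W^{k+1,p}(\T))$; then $\partial_x\sigma\in W^{k,p}$ and $\rho_i/\sigma\in W^{k,p}$ give $\zeta_i\in L^p([0,T];W^{k,p}(\T))$, hence $\rho_i\in L^p([0,T];W^{k+1,p}(\T))$, and feeding the improved $\rho_i$ back into $\xi$ closes the loop. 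By the one--dimensional Sobolev embeddings this yields arbitrary spatial regularity, and the equations then trade time derivatives for spatial ones, giving $\sigma,\rho_1,\rho_2\in C^\infty([0,T]\times\T)$. Finally, now that $b_i=(1-2\eta)\partial_x\log\sigma+\partial_xV_i$ is smooth and bounded, a last application of Proposition \ref{lowerbound} to each $\rho_i$ delivers the uniform positivity of the individual densities that is used in Proposition \ref{changevar}.

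The main obstacle is the coupling itself: the drift $b_i$ of the $\rho_i$--equation involves $\partial_x\log\sigma$, so no scalar result applies until $\sigma$ is known to be both regular and bounded away from zero, while the forcing $\xi$ of the $\sigma$--equation depends in turn on the $\rho_i$. The resolution is the ordering described above --- extracting positivity of $\sigma$ first (which simultaneously turns $\log\sigma$ and the ratios $\rho_i/\sigma$ into smooth nonlinearities and supplies the $L^\infty$ bounds that seed the bootstrap) and then propagating regularity for $\sigma$ and the $\rho_i$ in lockstep. A secondary technical point to check carefully is that $\sigma$ genuinely belongs to the weak-solution class $L^2([0,T];H^1(\T))\cap L^\infty([0,T];L^2(\T))$ demanded by Proposition \ref{lowerbound}, which is precisely why the preliminary $H^1$ gain from Proposition \ref{forcedheat} is needed before Harnack can be invoked.
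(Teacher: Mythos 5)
Your proposal is correct and follows essentially the same route as the paper's proof: recast $\sigma$ and each $\rho_i$ as scalar forced heat/Fokker--Planck equations, use Proposition \ref{lowerbound} to obtain H\"older continuity and uniform positivity of $\sigma$ first, then alternate applications of Proposition \ref{forcedheat} between the $\sigma$-equation and the $\rho_i$-equations to bootstrap arbitrary spatial (and then temporal) regularity, finishing with Proposition \ref{lowerbound} for the positivity of the individual densities. Your extra care in verifying that $\sigma$ lies in the weak-solution class $L^2([0,T];H^1(\T))\cap L^\infty([0,T];L^2(\T))$ before invoking the Harnack step is a welcome refinement that the paper passes over.
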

\begin{proof}
    Given $\rho_1,\rho_2$, a weak solution to the $\eta$-Regularised System \eqref{eq:regcdid}, the sum $\rho_1+\rho_2$, which we will from here on denote by $\sigma$, defines a weak solution to the Fokker-Planck equation 
    \[
    \begin{cases}
    \partial_t\sigma = \partial_x((1-\eta)\partial_x\sigma + b\sigma), \\ 
    b = \partial_xV_1 \frac{\rho_1}{\rho_1+\rho_2}+ \partial_xV_2 \frac{\rho_2}{\rho_1+\rho_2}, 
    \end{cases}
    \text{ on } (0,T)\times \T.
    \]
    Since $b\in L^\infty([0,T]\times \T)$ and since the initial datum $\rho_{1,0},\rho_{2,0}$ admit smooth uniformly positive densities, it follows from Proposition \ref{lowerbound} that $\sigma$ is uniformly H\"older continuous and uniformly positive on $[0,T]\times \T$.
    Moreover, since $\sigma \in C([0,T];L^p(\T))$and since $0 < \rho_i < \sigma$ it follows that $\rho_i \in L^\infty([0,T]\times \T)$.
    Using this summability on $\rho_i$, we initiate an iterative bootstrapping process for the regularity of $\rho_i$.
    In particular, since this argument hinges upon Proposition \ref{forcedheat}, we recast $\sigma$ as 
    a weak solution to the forced heat equation \begin{equation}\label{eq:heatsum}
    \begin{cases}
    \partial_t\sigma = (1-\eta)\partial_{xx}^2\sigma + \partial_x\xi, \\ 
    \xi = \partial_xV_1\rho_1+ \partial_xV_2\rho_2, 
    \end{cases}
    \text{ on } (0,T)\times \T.  
    \end{equation}
    Similarly, we recognise that $\rho_i$ weakly solves the forced heat equation 
     \begin{equation}\label{eq:heatspecies}
    \begin{cases}
    \partial_t\rho_i = \eta\partial_{xx}^2\rho_i + \partial_x\xi_i, \\ 
    \xi_i = \rho_i(\partial_x\log(\sigma)+\partial_xV_i), 
    \end{cases}
    \text{ on } (0,T)\times \T.   
     \end{equation}
    The bootstrapping argument can be expressed in the following steps, and we commence in the case $k=0$.
    \begin{enumerate}
        \item Since $\rho_i \in L^p([0,T];W^{k,p}(\T))$ it follows that $\xi \in L^p([0,T];W^{k,p}(\T))$, for every $p \in [1,\infty)$.
        \item Applying Proposition \ref{forcedheat} to Equation \eqref{eq:heatsum}, it follows that $\sigma\in L^p([0,T];W^{k+1,p}(\T))$ for every $p \in [1,\infty)$.
        \item Since $\sigma$ is uniformly positive on $[0,T]\times \T$, it follows that $\partial_x\log(\sigma)\in L^p([0,T];W^{k,p}(\T))$ and hence, $\xi_i \in L^p([0,T];W^{k,p}(\T))$ for every $p \in [1,\infty)$.
        \item Applying Proposition \ref{forcedheat} to Equation \eqref{eq:heatspecies}, it follows that $\rho_i \in L^p([0,T];W^{k+1,p}(\T))$ for every $p \in [1,\infty)$.
    \end{enumerate}
    By iterating the argument above, it is possible to establish that $\rho_i\in L^p([0,T];W^{k,p}(\T))$ for an arbitrary $k \in \N$ and for every $p \in [1,\infty)$.

    \medskip
    
    Once spatial regularity has been established up to an arbitrary order, regularity on the temporal derivative subsequently follows from Equation \eqref{eq:heatspecies}.
    For example, if we know that $\rho_i \in L^p([0,T];W^{2,p}(\T))$ then the right hand side of Equation \eqref{eq:heatspecies} belongs to $L^p([0,T]\times\T)$ and hence, $\partial_t\rho_i\in L^p([0,T]\times\T)$ and $\rho_i \in W^{1,p}([0,T]\times \T)$ for every $p \in [1,\infty)$. 
    Moreover, since we can show that $\rho_i \in W^{k,p}([0,T]\times \T)$ for arbitrary $k \in \N$ (and any $p \in [1,\infty)$), we conclude that $\rho_1, \rho_2 \in C^\infty([0,T]\times\T)$.

    \medskip 
    
    Finally, to conclude the uniform positivity of the densities $\rho_i$, we recognise that 
    defines a weak solution to the Fokker--Planck equation 
    \[
    \begin{cases}
    \partial_t\rho_i = \partial_x(\eta\partial_x\rho_i + b_i\rho_i), \\ 
    b_i = \partial_xV_i + \partial_x\log(\sigma), 
    \end{cases}
    \text{ on } (0,T)\times \T.
    \]
    In particular, since it is now clear that $b_i \in W^{1,2}([0,T]\times \T) \hookrightarrow L^\infty([0,T]\times\T)$ and since $\rho_{i,0}$ is assumed to uniformly strictly positive, it follows from Proposition \ref{lowerbound} that $\rho_i$ is uniformly strictly positive on $[0,T]\times \T$.
\end{proof}

\section{On the Inclusion of Reaction Terms}\label{sec:five}

Whilst it is not the main focus of this work, we believe that the methods outlined in this manuscript may readily be adapted in order to establish the weak existence theory for reaction-cross-diffusion systems of the type

\begin{equation}\label{eq:rcdid}
\begin{cases}
    \partial_t \rho_1 = \partial_x(\rho_1\partial_x(\log(\rho_1+\rho_2)+V_1)) + \rho_1 F_1(\rho_1,\rho_2),\\
    \partial_t \rho_2 = \partial_x(\rho_2\partial_x(\log(\rho_1+\rho_2)+V_2)) + \rho_2 F_2(\rho_1,\rho_2),
\end{cases}
\text{on } (0,T)\times \T.
\end{equation}

That being said, it is beyond the scope of this manuscript to construct an approximation of System \eqref{eq:rcdid}, as we did for System \eqref{eq:cdid} by constructing System \eqref{eq:regcdid}.
Instead, we present the formal computations necessary to derive the essential $BV$ estimates, directly on System \eqref{eq:rcdid}, and say that a smooth approximation is `suitable' for System \eqref{eq:rcdid} if these formal computations can be rigorously justified whilst preserving the integrity of the a priori $BV$ estimates, thus allowing for the passage to the limiting system. 
Our main result in this direction is as follows.

\begin{thm}\label{reactionexistence}
    Assume that there exists a suitable smooth approximation of System \eqref{eq:rcdid} which admits uniformly positive densities.
    Further, assume that $\rho_{1,0}, \rho_{2,0} \in \sP(\T) \cap L^\infty(\T)$.

    \medskip
    
    If the reaction terms $F_1,F_2\co [0,+\infty)^2 \to \R$ are bounded and Lipschitz continuous and Hypotheses \ref{H2}-\ref{H4} hold, then there exists a weak solution to System \eqref{eq:rcdid} with potentials $V_1,V_2$, reaction terms $F_1,F_2$ and initial data $\rho_{1,0},\rho_{2,0}$.
\end{thm}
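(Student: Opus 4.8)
The plan is to follow the architecture of the proof of Theorem \ref{maintheorem}, carrying the reaction terms through each step. The starting point is the observation, already flagged in the introduction, that System \eqref{eq:rcdid} rewrites cleanly in the variables $(\sigma,f(r))$. Summing the two equations shows that $\sigma$ solves a Fokker--Planck equation with the additional zeroth-order source $S := \rho_1 F_1 + \rho_2 F_2$, which satisfies $|S|\leqslant C\sigma$ for a constant $C$ depending on $\|F_i\|_{L^\infty}$. Meanwhile a direct computation of $\partial_t r = \sigma^{-1}\partial_t\rho_1 - r\sigma^{-1}\partial_t\sigma$ reveals that the reaction contribution to $\partial_t r$ is exactly $r(1-r)(F_1-F_2)$; multiplying by $f'(r) = (r(1-r))^{-1}$, the reaction therefore contributes the single \emph{bounded} additive term $F_1(\rho_1,\rho_2) - F_2(\rho_1,\rho_2)$ to the equation \eqref{eq:f(r)} for $f(r)$. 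This clean structure is what makes the reaction case tractable. As in Section \hyperref[sec:three]{Three}, all computations are performed on the assumed smooth, uniformly positive approximation and the bounds are shown to be independent of the approximation parameter.

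First I would establish the analogue of the sum estimate, Theorem \ref{sumestimate}. The drift coefficient of the $\sigma$-equation, $b = r\,\partial_x V_1 + (1-r)\partial_x V_2$, is bounded by $\|V_1\|_{W^{1,\infty}(\T)} + \|V_2\|_{W^{1,\infty}(\T)}$ \emph{independently of $r$}. Exploiting the $L^\infty(\T)$ bound on the initial data, a standard energy computation — testing with $\sigma^{q-1}$, absorbing the drift term into the diffusive dissipation and controlling the source $S$ through Grönwall — propagates $\sigma \in L^\infty([0,T];L^q(\T))$ for every $q$ with $\sigma_0 \in L^q(\T)$. Feeding this into the forced heat equation \eqref{eq:remarkheatsum}, with the extra source $S\in L^q([0,T]\times\T)$ handled by the same Duhamel/heat-kernel estimate used in Proposition \ref{forcedheat}, upgrades this to $\sigma \in L^q([0,T];W^{1,q}(\T)) \subset L^q([0,T];BV(\T))$. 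Crucially, this estimate is decoupled from $f(r)$.

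Next I would derive the main $BV$ estimate for $f(r)$, repeating the energy-dissipation argument of Theorem \ref{mainestimate} verbatim; every term treated there is unchanged, and the only new contribution to the time derivative of $\Phi(t):=\int_{\T}|\partial_x f(r_t)+V|\diff x$ is $\int_{\T}\sgn(\partial_x f(r)+V)\,\partial_x(F_1-F_2)\diff x$, which is bounded by $TV(F_1-F_2)(t)$. By the Lipschitz hypothesis on $F_1,F_2$ and the chain rule for $BV$ functions in one dimension, $TV(F_1-F_2)(t) \leqslant L\,(TV(\rho_1(t))+TV(\rho_2(t)))$; writing $\partial_x\rho_i$ in terms of $\partial_x\sigma$ and $\partial_x r = r(1-r)\partial_x f(r)$, and using $f'(r)\geqslant 4$, this is dominated by $C(\|\partial_x\sigma(t)\|_{L^1(\T)} + \|\sigma(t)\|_{L^\infty(\T)}(\Phi(t)+\|V\|_{L^1(\T)}))$. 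Since the sum estimate already furnishes $\|\partial_x\sigma\|_{L^1(\T)} \in L^1(0,T)$ and $\|\sigma\|_{L^\infty(\T)} \in L^1(0,T)$ (the latter via $BV(\T)\hookrightarrow L^\infty(\T)$), the integral form of Grönwall's inequality with the $L^1$-in-time weight $t\mapsto \beta + CL\|\sigma(t)\|_{L^\infty(\T)}$ closes the bound and yields $\partial_x f(r) \in L^\infty([0,T];L^1(\T))$. Corollary \ref{maincorollary} then gives $\rho_i \in L^2([0,T];BV(\T))$ exactly as before.

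With the $BV$ bound in hand, the remaining steps mirror the conservative case: Proposition \ref{timecontrol} carries over since the new source $S$ is bounded, hence harmless in the negative Sobolev estimate; the Aubin--Lions--Simon lemma yields strong $L^p$ and almost-everywhere convergence of $\rho_i^\eta$ along a subsequence; and one passes to the limit in the weak formulation. The cross-diffusion product is handled exactly as in the proof of Theorem \ref{maintheorem}, while the reaction term passes to the limit easily: $\rho_i^\eta \to \rho_i$ strongly and $F_i$ is continuous and bounded, so $\rho_i^\eta F_i(\rho_1^\eta,\rho_2^\eta) \to \rho_i F_i(\rho_1,\rho_2)$ in $L^1$ by dominated convergence. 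The main obstacle is the coupling introduced in the $BV$ estimate: unlike the conservative case, where the $\partial_x\log\sigma$ contribution integrated to zero and left the $f(r)$ estimate self-contained, the reaction term $\partial_x(F_1-F_2)$ genuinely reintroduces $\partial_x\rho_i$, and hence the very $BV$ norms one is trying to control. The resolution — and the reason the sum estimate must be proved first and the higher summability of the initial data is assumed — is precisely that the $\sigma$-bound is independent of $r$, so it may serve as a fixed $L^1$-in-time Grönwall weight that decouples the two estimates.
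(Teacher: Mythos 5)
Your proposal is correct and follows essentially the same route as the paper's sketch: rewrite the system in $(\sigma,f(r))$, note that the reactions contribute only a zeroth-order source to the $\sigma$-equation and the additive term $F_1-F_2$ to the $f(r)$-equation, prove the (decoupled) energy/Sobolev estimate for $\sigma$ first using $\sigma_0\in L^\infty(\T)$, then close the Gr\"onwall argument for $\int_{\T}|\partial_x f(r)+V|\diff x$ by controlling $\partial_x(F_1-F_2)$ through $\partial_x\sigma$ and $\partial_x r$, and finally pass to the limit using the boundedness and Lipschitz continuity of $F_i$. The only (immaterial) deviation is that you absorb the $\partial_x r$ contribution with the time-integrable Gr\"onwall weight $\|\sigma(t)\|_{L^\infty(\T)}\leqslant C\|\sigma(t)\|_{BV(\T)}$, whereas the paper invokes the global bound $\sigma\in L^\infty([0,T]\times\T)$ from the parabolic maximum principle to make the same coefficient a constant.
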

\begin{proof}[Sketch of Proof]
As in Section \hyperref[sec:three]{Three}, we would like to make a change of variables $(\rho_1,\rho_2) \mapsto (\sigma, f(r))$ and estimate the quantities $\sigma, f(r)$ in some appropriate $L^p([0,T];BV(\T))$ space.
To make clear this change of variables we first define new reaction functions
\begin{equation}\label{eq:newreaction}
\tilde{F_i}(r,\sigma):= F_i(r\sigma,(1-r)\sigma).
\end{equation}
Subsequently, and following the calculations of Proposition \ref{changevar}, if $(\rho_1,\rho_2)$ solves System \eqref{eq:rcdid} then, at least formally, with the same choice of potentials $V$ and $W$ as in \eqref{def:VW},
the variable $\sigma$ satisfies the equality
\begin{equation}\label{eq:reacsigma}
    \partial_t\sigma = \partial_{x}(\partial_x\sigma+ \sigma W + r\sigma V) + r\sigma\tilde{F}_1(r,\sigma)
    + (1-r)\sigma\tilde{F}_2(r,\sigma).
\end{equation}
Meanwhile, the variable $f(r)$ satisfies the equality
\begin{equation}\label{eq:reacf}
\begin{split}
\partial_t f(r) & =(1-r)\partial_x f(r)V + (\partial_x f(r) + V)(\partial_x\log(\sigma) +W)\\
& -VW + \partial_x V +\tilde{F}_1(r,\sigma)-\tilde{F}_2(r,\sigma).
\end{split}
\end{equation}

Using Equation \eqref{eq:reacsigma}, we first establish Sobolev estimates for $\sigma$ via a classical energy dissipation argument (for which we refer the interested reader to \cite[Chapter III, $\mathsection 2$]{Ladyzhenskaya} for a more detailed exposition of this technique). 
In particular, we consider the dissipation of the $L^2$ energy $\sigma \mapsto \|\sigma\|_{L^2(\T)}^2$. 
This dissipation reads as follows.

 \begin{equation}\label{eq:reacede}
    \int_{\T} |\sigma_s|^2 \diff x = \int_{\T} |\sigma_0|^2  \diff x  + 2\int_0^s\int_{\T} \sigma_t \partial_t\sigma_t \diff x \diff t.
    \end{equation}

In order to establish the gradient bounds, we would like to once more apply a Gr\"onwall argument and hence, it remains to bound the righter most term in Equation \eqref{eq:reacede} in terms of the $L^2$-energy. 
In particular, substituting with Equation \eqref{eq:reacsigma} and integrating by parts, we obtain the following equality. 

 \begin{equation}\label{eq:rhs}
 \begin{split}
   \int_0^s\int_{\T} \sigma \partial_t\sigma \diff x \diff t & =  -\int_0^s\int_{\T} |\partial_x\sigma|^2 + \partial_x\sigma (\sigma W + r\sigma V) \diff x \diff t\\
   & + \int_0^s\int_{\T}  r\sigma^2\tilde{F}_1(r,\sigma)
    + (1-r)\sigma^2\tilde{F}_2(r,\sigma) \diff x \diff t.
 \end{split}
    \end{equation}

Applying Young's inequality to the right hand side in the first line of Equation \eqref{eq:rhs}, it further follows that 

 \begin{equation}\label{eq:rhsII}
 \begin{split}
   \int_0^s\int_{\T} \sigma \partial_t\sigma \diff x \diff t & \leqslant -\frac{1}{2}\int_0^s\int_{\T} |\partial_x\sigma|^2  \diff x \diff t + \frac{1}{2}\int_0^s\int_{\T}\sigma^2 ( W + r V)^2 \diff x \diff t\\
   & + \int_0^s\int_{\T}  r\sigma^2\tilde{F}_1(r,\sigma)
    + (1-r)\sigma^2\tilde{F}_2(r,\sigma)\diff x \diff t.
 \end{split}
    \end{equation}

From Hypothesis \ref{H3}, it is assumed that $W,V \in W^{1,1}(\T) \hookrightarrow L^\infty(\T)$, meanwhile, we also assumed that $F_i$ is uniformly bounded on $[0,+\infty)^2$. 
Moreover, since we assume that these calculations are justified for an approximation for which $\rho_1,\rho_2$ are non-negative, it follows that $r \in [0,1]$ and hence, by applying H\"older's inequality, it follows that there exists $C$ which depends only on $W,V,F_i$ such that 

 \begin{equation}\label{eq:rhsIII}
 \begin{split}
   \int_0^s\int_{\T} \sigma \partial_t\sigma \diff x \diff t \leqslant -\frac{1}{2}\int_0^s\int_{\T} |\partial_x\sigma|^2  \diff x \diff t + \frac{C}{2}\int_0^s\int_{\T}\sigma^2 \diff x \diff t.
 \end{split}
    \end{equation}

In particular, returning to Equation \eqref{eq:reacede}, we may use Inequality \eqref{eq:rhsIII} to obtain the bound

\begin{equation}\label{eq:rhsIV}
        \int_{\T} |\sigma_s|^2 \diff x + \frac{1}{2}\int_0^s\int_{\T} |\partial_x\sigma|^2  \diff x \diff t \leqslant \int_{\T} |\sigma_0|^2  \diff x  + C\int_0^s\int_{\T} \sigma^2_t \diff x \diff t.
\end{equation}

From Inequality \eqref{eq:rhsIV}, it then follows from a Gr\"onwall argument that 

\begin{equation}
    \int_{\T} |\sigma_s|^2 \diff x \leqslant \exp(Cs)\int_{\T} |\sigma_0|^2  \diff x . 
\end{equation}

Moreover, under the assumption that $\sigma_0 \in L^\infty(\T) \subset L^2(\T)$, it is clear that $\sigma$ is bounded in $L^\infty([0,T];L^2(\T))$ and hence, it follows from Inequality \eqref{eq:rhsIV} that $\sigma$ is also bounded uniformly in $L^2([0,T];H^1(\T))\hookrightarrow L^2([0,T];BV(\T))$. 
In particular, this $L^2([0,T];H^1(\T))$ bound depends only on $W,V,F_i$ through the constant $C$ and on the initial data $\sigma_0$ through $\|\sigma_0\|_{L^2(\T)}$.

\medskip 

Having established the necessary $BV$ estimate for $\sigma$ we now turn to $f(r)$ and, once more, consider the time dissipation of the energy 
\begin{equation}\label{ener:react}
    \int_{\T}|\partial_x f(r) +V| \diff x.
 \end{equation} 
Using Equation \eqref{eq:reacf}, the Gr\"onwall argument follows exactly as in Theorem \ref{maintheorem} except, with the addition of the term 
\begin{equation}\label{eq:extra}
\begin{split}
    \int_0^s\int_{\T} \sgn (\partial_x f(r) +V)\partial_x\bigg(\tilde{F}_1(r,\sigma)- \tilde{F}_2(r,\sigma) \bigg) & \\
    \leqslant \sum_{i=1}^2 \bigg(\|\partial_\sigma\tilde{F_i}(r,\sigma)\|_{L^\infty([0,T]\times\T)}\|\partial_x\sigma\|_{L^1([0,T]\times\T)} + \|\partial_r\tilde{F_i}(r,\sigma)\|_{L^\infty([0,T]\times\T)}&\|\partial_x r\|_{L^1([0,T]\times\T)}\bigg).
\end{split}
\end{equation}
Since we have already established an a priori estimate for $\partial_x \sigma$ in $L^2([0,T]\times\T)$, it follows from Inequality \eqref{eq:extra} that the Gr\"onwall argument for Energy \eqref{ener:react} may be closed as long as $\partial_\sigma \tilde{F}_i$ and $\partial_r \tilde{F}_i$ are uniformly bounded.
These derivatives are certainly bounded when $\tilde{F}_i \in W^{1,\infty}([0,\infty)\times [0,1])$ and, moreover, the mapping 
\[
(r,\sigma) \mapsto (r\sigma, (1-r)\sigma)
\]
is locally Lipschitz continuous.
Since we assume that $F_i\in W^{1,\infty}([0,+\infty)^2)$, it then follows from Equation \eqref{eq:newreaction} that $\tilde{F}_i \in W^{1,\infty}([0,\infty)\times [0,1])$ if $\sigma, r \in L^\infty([0,T]\times\T)$. 

\medskip 

We already know that $r \in [0,1]$, meanwhile, it is assumed that $\sigma_0\in L^\infty(\T)$ from which it follows from \cite[Page 192, Chapter III, Theorem 8.1]{Ladyzhenskaya} that $\sigma\in L^\infty([0,T]\times \T)$. 
Consequently, the Gr\"onwall argument for Energy \eqref{ener:react} is closable when we assume $F_i\in W^{1,\infty}([0,+\infty)^2)$ and $\rho_{1,0},\rho_{2,0} \in L^\infty(\T)$.

\medskip

As with System \ref{eq:cdid}, given a suitable smooth approximation, the uniform $BV$ estimates on $\sigma$ and $f(r)$ are sufficient to provide strong compactness for the individual species and so it is left to consider the passage to the limiting system.
In particular, the reaction terms $F_i$ are assumed to be bounded and Lipschitz continuous and hence, if any sequences $(\rho_{1,n})_{n\ge 0},(\rho_{2,n})_{n\ge 0}$ would strongly converge in $L^p([0,T]\times \T)$ to $\rho_1,\rho_2$ respectively, it follows that the sequences $\left(F_i(\rho_{1,n}, \rho_{2,n})\right)_{n\ge 0}$, $i \in \{1,2\}$ would converge strongly in $L^p([0,T]\times \T)$ to $F_i(\rho_1,\rho_2)$, as $n\to+\infty$.
\end{proof}

\subsection*{Acknowledgements.}
We would like to thank Filippo Santambrogio for his pertinent comments and remarks, which helped us to improve the presentation of our results. We would like to also thank Jakub Skrzeczkowski for fruitful discussions regarding Kato's inequality.
The second author was supported by the Engineering and Physical Sciences Research Council [Grant Number EP/W524426/1].

\begin{itemize}
    \item Conflict of Interest - The authors declare that there are no known conflicts of interest associated with this
manuscript and no financial support has influenced its outcome.
\end{itemize}
\begin{appendices}
\fakesection{appendix}
\begin{lemma}\label{gradientinterchangelemma}
    Given $g\in W^{1,1}([0,T]\times \T)$, the following equality holds for every $s \in [0,T]$.
    \begin{equation}\label{eq:intchange}
    \int_{\T} |g| \diff x\bigg|_{t=s} = \int_{\T} |g|  \diff x\bigg|_{t=0} + \int_0^s\int_{\T} \sgn(g)\partial_t g \diff x \diff t.
    \end{equation}
    \end{lemma}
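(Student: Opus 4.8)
The statement is a chain-rule-plus-fundamental-theorem-of-calculus identity for the Lipschitz map $|\cdot|$, and the only real subtlety is its non-differentiability at the origin. The plan is to first upgrade the composition $|g|$ to a $W^{1,1}$ function with an explicit time derivative, then integrate in space and apply the fundamental theorem of calculus in time.

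First I would establish the pointwise chain rule
\[
\partial_t |g| = \sgn(g)\,\partial_t g \qquad \text{a.e. on } (0,T)\times\T.
\]
Since $z\mapsto|z|$ is Lipschitz, $|g|\in W^{1,1}([0,T]\times\T)$, and the content of the identity is the correct value on the level set $\{g=0\}$. This is where the only genuine obstacle lies. I would resolve it with Stampacchia's lemma, namely that $\partial_t g=0$ for almost every $(t,x)$ with $g(t,x)=0$; consequently the product $\sgn(g)\,\partial_t g$ is unambiguous (the convention for $\sgn(0)$ is irrelevant) and the identity holds off a null set. To keep the argument self-contained I would derive the identity by regularising: with $\phi_\varepsilon(z):=\sqrt{z^2+\varepsilon^2}-\varepsilon$ one has $\phi_\varepsilon\in C^\infty$, $0\le|z|-\phi_\varepsilon(z)\le\varepsilon$, $|\phi_\varepsilon'|\le 1$ and $\phi_\varepsilon'(z)\to\sgn(z)$ for $z\neq0$; the smooth chain rule gives $\partial_t\phi_\varepsilon(g)=\phi_\varepsilon'(g)\,\partial_t g$, and letting $\varepsilon\to0$ (dominated convergence, using $|\phi_\varepsilon'(g)\,\partial_t g|\le|\partial_t g|\in L^1$, together with $\partial_t g = 0$ a.e. on $\{g=0\}$) yields the claim.

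Next I would pass from the two-variable derivative to a one-variable statement in time. Working with the representative of $g$ that is continuous from $[0,T]$ into $L^1(\T)$ --- which exists because $\int_\T|g(t,\cdot)-g(t',\cdot)|\,dx\le\int_{t'}^t\!\!\int_\T|\partial_t g|\,dx\,d\tau\to0$ --- I set $h(t):=\int_\T|g(t,x)|\,dx$. Testing against $\psi\in C_c^\infty((0,T))$ (constant in the variable $x$, where $\T$ has no boundary) and using Fubini together with the chain rule above shows that $h\in W^{1,1}([0,T])$ with $h'(t)=\int_\T\sgn(g)\,\partial_t g\,dx$ for a.e. $t$. Since $h'\in L^1([0,T])$, the function $h$ is absolutely continuous on $[0,T]$.

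Finally, the fundamental theorem of calculus for absolutely continuous functions gives, for every $s\in[0,T]$,
\[
h(s)-h(0)=\int_0^s h'(t)\,dt=\int_0^s\!\!\int_\T \sgn(g)\,\partial_t g\,dx\,dt,
\]
which is exactly \eqref{eq:intchange}. The main obstacle throughout is the behaviour on $\{g=0\}$; once the Stampacchia identity is available, the remaining steps are standard measure-theoretic bookkeeping.
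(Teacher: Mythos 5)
Your proposal is correct and follows essentially the same route as the paper: both arguments rest on the Serrin/Stampacchia chain rule for the Lipschitz composition $|g|$ (with the zero level set handled by the fact that $\partial_t g=0$ a.e. on $\{g=0\}$), followed by the fundamental theorem of calculus for absolutely continuous functions and Fubini. The only differences are cosmetic: you prove the chain rule by explicit regularisation rather than citing Serrin, and you integrate over $\T$ before applying the FTC, whereas the paper applies the FTC to $t\mapsto|g(t,x)|$ for a.e.\ fixed $x$ and then swaps the order of integration.
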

    \begin{proof}
    Since $g\in W^{1,1}([0,T]\times \T)$, the map $t\mapsto g(t, x)$  belongs to $W^{1,1}([0,T])$ for almost every $x \in \T$. 
    Moreover, since the absolute value function is Lipschitz continuous, it follows from the unpublished result of Serrin (according to the introduction of \cite{LeoniGiovanniMorini}, see also \cite[Theorem 2.1.11]{ziemer2012weakly}) that, for almost every $x \in \T$, the composition 
    \[[0,T]\ni t\mapsto |g(t,x)|\]
    is absolutely continuous and hence, for almost every $ x\in \T$ the map $t\mapsto |g(t,x)|$ is differentiable for almost every $t\in [0,T]$ - this result is sometimes known as Stampacchia's Theorem.
    Moreover, it also follows from this result that, at points of differentiability, the weak chain rule
    \begin{equation}\label{eq:chainrule}
    \partial_t|g(t,x)| = \sgn(g(t,x))\partial_tg(t,x)   
    \end{equation} 
    is satisfied.
    Furthermore, as a consequence of the absolute continuity of the map $t \mapsto |g(t,x)|$, the following equality is justified for every $s\in [0,T]$.
    \begin{equation}\label{eq:accontinuity}
    \int_{\T}|g|\diff x\bigg|_{t=s} - \int_{\T}|g|\diff x\bigg|_{t=0} = \int_{\T}\int_0^s \partial_t|g|\diff t \diff x = \int_{\T}\int_0^s \sgn(g)\partial_tg\diff t \diff x.
    \end{equation}
    To conclude Equation \eqref{eq:intchange} holds, we recognise that the order of integration may be swapped on the right-hand side of Equation \eqref{eq:accontinuity}. 
    This is justified by Fubini's Theorem since $\partial_t |g| \in L^1([0,T]\times\T)$.
    \end{proof}
    \begin{lemma}\label{modulusinterchangelemma}
        Given $g,h \in C([0,T];W^{1,1}(\T))$, the following equality holds almost everywhere on $[0,T]\times\T$
        \begin{equation}\label{eq:moduluschange}
            \partial_x(|g|h) = \sgn(g)\partial_x(gh).
        \end{equation}
    \end{lemma}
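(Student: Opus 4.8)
The plan is to reduce the identity to two elementary facts about Sobolev functions in one spatial dimension: the Leibniz product rule and the chain rule for the absolute value. First I would fix a time $t\in[0,T]$; since $g,h\in C([0,T];W^{1,1}(\T))$, both $g(t,\cdot)$ and $h(t,\cdot)$ lie in $W^{1,1}(\T)$ for every such $t$. Because we are on the one-dimensional torus, the embedding $W^{1,1}(\T)\hookrightarrow C(\T)\cap L^\infty(\T)$ ensures that $g(t,\cdot)$, $h(t,\cdot)$, and hence also $|g(t,\cdot)|$, are bounded and continuous. Consequently the products $gh$ and $|g|h$ again belong to $W^{1,1}(\T)$, and the Leibniz rule
\[
\partial_x(gh) = (\partial_x g)\,h + g\,\partial_x h, \qquad \partial_x(|g|h) = (\partial_x |g|)\,h + |g|\,\partial_x h
\]
holds almost everywhere in $x$.

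Next I would invoke the chain rule $\partial_x|g| = \sgn(g)\,\partial_x g$, valid almost everywhere; this is exactly the spatial counterpart of Stampacchia's theorem already used in the proof of Lemma \ref{gradientinterchangelemma}. Substituting it into the second Leibniz identity gives $\partial_x(|g|h) = \sgn(g)(\partial_x g)\,h + |g|\,\partial_x h$. On the other hand, multiplying the first Leibniz identity by $\sgn(g)$ and using the pointwise identity $\sgn(g)\,g = |g|$ yields $\sgn(g)\,\partial_x(gh) = \sgn(g)(\partial_x g)\,h + |g|\,\partial_x h$. The two right-hand sides coincide, which is precisely \eqref{eq:moduluschange} for this fixed $t$.

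Finally I would promote this pointwise-in-$x$, a.e.-in-$t$ identity to an almost-everywhere statement on $[0,T]\times\T$ via Fubini's theorem, using the joint measurability of all the functions involved. The only point deserving a word of care is the ambiguity of $\sgn(g)$ on the set $\{g=0\}$; this is harmless because $\partial_x g = 0$ almost everywhere on $\{g=0\}$, a standard property of Sobolev functions, so neither side of the identity is affected by the value assigned to $\sgn(0)$. I do not anticipate any genuine obstacle: the entire argument is just the product and chain rules combined with the algebraic identity $\sgn(g)\,g = |g|$, and the one-dimensional embedding is what makes the products well behaved.
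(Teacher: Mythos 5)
Your proposal is correct and follows essentially the same route as the paper: both arguments combine the Stampacchia/Serrin chain rule $\partial_x|g|=\sgn(g)\partial_x g$ with the product rule and the identity $\sgn(g)g=|g|$, the paper phrasing this via absolute continuity of $x\mapsto g(t,x)$ and $x\mapsto h(t,x)$ rather than via the $W^{1,1}(\T)\hookrightarrow L^\infty(\T)$ embedding. Your added remark that the ambiguity of $\sgn$ on $\{g=0\}$ is harmless because $\partial_x g=0$ a.e. there is a nice touch the paper leaves implicit, but it does not change the substance of the argument.
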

    \begin{proof}
        As discussed in Lemma \ref{gradientinterchangelemma}, since $g \in C([0,T];W^{1,1}(\T))$, it follows from the unpublished work of Serrin (the Stampacchia Theorem), that the map $\T \ni x\mapsto |g(t,x)|$ is absolutely continuous for every $t\in [0,T]$. 
        Moreover, it satisfies the weak chain rule 
        \[
        \partial_x|g(t,x)| = \sgn(g(t,x))\partial_xg(t,x)
        \]
        at its points of differentiability. 
        Similarly, since the map $x \mapsto h(t,x)$ is also absolutely continuous for every $t\in [0,T]$, it follows that $h$ and moreover the product $gh$ is differentiable in $x$ almost everywhere on $[0,T]\times\T$.
        In particular, at the points of differentiability, the following equality is satisfied as consequence of the product rule.
        \[
        \partial_x(|g|h) = |g|\partial_xh + \sgn(g)\partial_x g h = \sgn(g)(g\partial_x h + h\partial_x g) = \sgn(g)\partial_x(gh).\]
    \end{proof}
    \begin{lemma}
        If $u \in L^1(\T)$ and $\partial_{xx}^2 u \in L^1(\T)$, then $\partial_{xx}^2 |u| \in \mathscr{M}(\T)$. 
        Moreover, the inequality 
        \begin{equation}\label{eq:Kato}
            \partial_{xx}^2 |u| \geqslant \sgn(u)  \partial_{xx}^2(u)
        \end{equation}
        is satisfied in the sense of distributions.
    \end{lemma}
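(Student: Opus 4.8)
The plan is to first upgrade the hypotheses to the pointwise regularity $u \in C^1(\T)$, then obtain the inequality by a convex smoothing of the absolute value, and finally deduce the measure property as a direct consequence of the inequality itself.

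First I would observe that the assumptions $u \in L^1(\T)$ and $\partial_{xx}^2 u \in L^1(\T)$ already force $u \in W^{2,1}(\T)$. Indeed, testing $\partial_{xx}^2 u$ against the constant function shows that $\partial_{xx}^2 u$ has zero mean on $\T$, so it admits a periodic primitive in $W^{1,1}(\T)$; integrating once more shows that $\partial_x u$ coincides, up to an additive constant, with an $L^1(\T)$ function, whence $u \in W^{2,1}(\T)$. By the one-dimensional Sobolev embedding $W^{2,1}(\T) \hookrightarrow C^1(\T)$, the function $u$ is continuously differentiable. This is the key qualitative gain: it makes $|u|$ Lipschitz and justifies the composition and product rules used below.

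Next I would introduce the standard convex regularization $\phi_\varepsilon(s) := \sqrt{s^2 + \varepsilon^2}$, which is smooth and convex with $\phi_\varepsilon'(s) = s/\sqrt{s^2+\varepsilon^2}$, $\phi_\varepsilon'' \geqslant 0$ and $|\phi_\varepsilon'| \leqslant 1$, and which satisfies $\phi_\varepsilon(s) \to |s|$ uniformly while $\phi_\varepsilon'(s) \to \sgn(s)$ pointwise, with the convention $\sgn(0)=0$. Since $u \in C^1(\T)$ and $\partial_{xx}^2 u \in L^1(\T)$, the chain and product rules yield, as an identity in $L^1(\T)$,
\[
\partial_{xx}^2 \phi_\varepsilon(u) = \phi_\varepsilon''(u)(\partial_x u)^2 + \phi_\varepsilon'(u)\partial_{xx}^2 u \geqslant \phi_\varepsilon'(u)\partial_{xx}^2 u,
\]
where the inequality is precisely the convexity $\phi_\varepsilon'' \geqslant 0$. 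Testing against an arbitrary non-negative $\varphi \in C^\infty(\T)$ and integrating by parts twice gives
\[
\int_{\T} \phi_\varepsilon(u)\,\partial_{xx}^2 \varphi \diff x \geqslant \int_{\T} \phi_\varepsilon'(u)(\partial_{xx}^2 u)\varphi \diff x.
\]
I would then send $\varepsilon \to 0$: on the left, $\phi_\varepsilon(u) \to |u|$ uniformly, so the left-hand side converges to $\int_{\T} |u|\,\partial_{xx}^2\varphi \diff x$; on the right, $\phi_\varepsilon'(u) \to \sgn(u)$ pointwise with the integrable domination $|\phi_\varepsilon'(u)(\partial_{xx}^2 u)\varphi| \leqslant |\partial_{xx}^2 u|\,\|\varphi\|_{L^\infty(\T)}$, so dominated convergence produces $\int_{\T} \sgn(u)(\partial_{xx}^2 u)\varphi \diff x$. (Note that the convention $\sgn(0)=0$ is automatically the correct one, since $\phi_\varepsilon'(0)=0$ for every $\varepsilon$, so no difficulty arises on the level set $\{u=0\}$.) This is exactly the distributional inequality \eqref{eq:Kato}.

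Finally, for the membership statement, I would read \eqref{eq:Kato} as the assertion that the distribution $T := \partial_{xx}^2|u| - \sgn(u)\,\partial_{xx}^2 u$ is non-negative, i.e. $\langle T,\varphi\rangle \geqslant 0$ for all non-negative $\varphi \in C^\infty(\T)$. By the standard representation theorem for non-negative distributions, $T$ is then a non-negative Radon measure on $\T$; since $\sgn(u)\,\partial_{xx}^2 u \in L^1(\T) \subset \mathscr{M}(\T)$, we conclude that $\partial_{xx}^2|u| = T + \sgn(u)\,\partial_{xx}^2 u \in \mathscr{M}(\T)$. I expect the only genuine subtlety to be the rigorous justification of the product rule for $\partial_{xx}^2\phi_\varepsilon(u)$ when $u$ is merely $W^{2,1}$ rather than smooth: one must verify that $\phi_\varepsilon'(u) \in C^1(\T)$ (which holds because $\phi_\varepsilon$ is smooth and $u \in C^1(\T)$) and that $\partial_x u \in W^{1,1}(\T)$, so that $\phi_\varepsilon'(u)\partial_x u$ is weakly differentiable with the stated $L^1$ derivative; everything else is routine.
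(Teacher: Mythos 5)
Your proposal is correct, but it takes a genuinely different route from the paper: the paper disposes of this lemma in three lines by citing Kato's original result (\cite[Lemma A]{Kato1972}, stated for $\partial_{xx}^2 u \in L^1_{loc}(\R)$) and remarking that it localises to $\T$, whereas you give a complete self-contained proof. Your argument is essentially the classical smoothing proof of Kato's inequality, but it exploits the one-dimensional periodic setting in a way the general statement cannot: the preliminary observation that $\partial_{xx}^2 u \in L^1(\T)$ has zero mean and therefore forces $u \in W^{2,1}(\T) \hookrightarrow C^1(\T)$ lets you apply the chain rule to $\phi_\varepsilon(u)$ directly, avoiding the mollification of $u$ itself that the general $L^1_{loc}$ proof requires. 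The remaining steps --- the pointwise inequality $\partial_{xx}^2\phi_\varepsilon(u) \geqslant \phi_\varepsilon'(u)\partial_{xx}^2 u$ from convexity, the boundary-free integration by parts on $\T$, dominated convergence as $\varepsilon \to 0$, and the deduction of $\partial_{xx}^2|u| \in \mathscr{M}(\T)$ from the Riesz representation of non-negative distributions combined with $\sgn(u)\partial_{xx}^2 u \in L^1(\T)$ --- are all correctly justified, including the minor point that the convention $\sgn(0)=0$ is harmless (indeed $\partial_{xx}^2 u$ vanishes a.e.\ on $\{u=0\}$ anyway). What the paper's approach buys is brevity and a statement valid in greater generality; what yours buys is a transparent, elementary argument whose only external input is the representation theorem for non-negative distributions, and which makes visible exactly where the hypotheses $u,\partial_{xx}^2 u \in L^1(\T)$ are used.
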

    \begin{proof}
    This is the consequence of the classical Kato's inequality. The original proof of Kato's inequality is established by Kato in \cite[Lemma A]{Kato1972} (see also \cite{brenier}) and states that, if $\partial_{xx}^2 u \in L^1_{loc}(\R)$, then $\partial_{xx}^2 |u| \in \mathscr{M}_{loc}(\R)$ whilst Inequality \eqref{eq:Kato} holds in the sense of distributions. 
        Moreover, since this result holds locally, it readily generalises to $\T$, since any subset of $\T$ may be viewed as a compact subset of $\R$.
    \end{proof}
\end{appendices}

\printbibliography

\end{document}